%
%
%
\documentclass{amsart}
\usepackage{color}
\usepackage{graphicx,epsf}
\usepackage{amsmath,amscd}
\newtheorem{theorem}{Theorem}[section]
\newtheorem{lemma}[theorem]{Lemma}
\newtheorem{corollary}[theorem]{Corollary}
\newtheorem{proposition}[theorem]{Proposition}
\theoremstyle{definition}
\newtheorem{definition}[theorem]{Definition}
\newtheorem{example}[theorem]{Example}

\theoremstyle{remark}
\newtheorem{remark}[theorem]{Remark}
\theoremstyle{conjecture}
\newtheorem{conjecture}[theorem]{Conjecture}
\numberwithin{equation}{section}



\vfuzz2pt 

\hfuzz2pt 


\newcommand{\Real}{{\mathbb R}}

\newcommand{\R}{{\rm  R}}

\newcommand{\eps}{\varepsilon}

\newcommand{\x}{\mathbf{x}}
\newcommand{\y}{\mathbf{y}}
\newcommand{\z}{\mathbf{z}}

\newcommand {\hide}[1]{}

\begin{document}

\title[Semi-monotone sets]{Semi-monotone sets}
\author{Saugata Basu}
\address{Department of Mathematics,
Purdue University, West Lafayette, IN 47907, USA}
\email{sbasu@math.purdue.edu}
\author{Andrei Gabrielov}
\address{Department of Mathematics,
Purdue University, West Lafayette, IN 47907, USA}
\email{agabriel@math.purdue.edu}
\author{Nicolai Vorobjov}
\address{
Department of Computer Science, University of Bath, Bath
BA2 7AY, England, UK}
\email{nnv@cs.bath.ac.uk}
\thanks{S.~Basu was supported in part by NSF grant CCF-0915954,
A.~Gabrielov was supported in part by NSF grant DMS-0801050.}


\begin{abstract}
A coordinate cone in $\Real^n$ is an intersection of some coordinate
hyperplanes and open coordinate half-spaces.
A semi-monotone set is an open bounded subset of $\Real^n$, definable in an o-minimal structure
over the reals, such that its intersection with any translation of any
coordinate cone is connected.
This can be viewed as a generalization of the convexity property.
Semi-monotone sets have a number of interesting geometric and combinatorial properties.
The main result of the paper is that every semi-monotone set is a topological regular cell.
\end{abstract}
\maketitle

\section*{Introduction}
It is well known that in o-minimal geometry, definable sets that are
locally closed are easier to handle than arbitrary definable sets.
A typical example of this phenomenon can be seen in the well-studied
problem of obtaining tight upper bounds on  topological invariants such as the
Betti numbers of semi-algebraic or semi-Pfaffian sets in terms of the
complexity of formulae defining them.
Certain standard techniques from  algebraic topology (for example, inequalities
stemming from the Mayer-Vietoris exact sequence) are
directly applicable only in the case of locally closed definable sets.
Definable sets which are not locally closed are comparatively more difficult to analyze.
In order to overcome this difficulty, Gabrielov and Vorobjov
in their paper \cite{GV09} suggested a construction which, given a definable set $S$ in an
o-minimal extension of the reals, produced an explicit family of
definable compact sets converging to $S$.
Under a certain technical condition (called ``separability'')
they proved that the  approximating compact sets are homotopy equivalent to $S$.
The separability condition is automatically
satisfied in many cases of interests -- such as when $S$ is described by equations
and inequalities with continuous definable functions.

However, the property of separability is not  preserved under taking images of definable maps,
and this restricts the applicability of this construction.
It was conjectured in \cite{GV09} that the crucial property of the approximating
family (homotopy equivalence to $S$) remains true
even without the separability hypothesis.
Proving this conjecture seems to be a rather difficult problem at present.
One of the authors of the current paper (Gabrielov) has outlined a research program whose
completion would lead (amongst other things) to a proof of the conjecture.
The goal of the program is a ``triangulation'' of an increasing definable family of
compact sets.
More precisely, the
goal is to prove that given any increasing definable family of compact sets converging to a
definable set $S \subset \Real^n$, there exists a definable triangulation of $\Real^n$
such that inside each open simplex of this triangulation the increasing definable
family belongs to a finite list of combinatorial types.
Such a triangulation should be considered as being compatible with the given increasing family
(thus generalizing the standard notion of definable triangulations compatible
with a given definable set).
The homotopy equivalence conjecture will then follow from this triangulation.

One of the key steps in Gabrielov's program
is to prove the existence of a regular triangulation of the graph of a
definable function.
More precisely, there is the following conjecture.

\begin{conjecture}\label{con:triang}
Let $f:\> K \to \Real$, be a definable function on a compact definable set $K \subset \Real^m$.
Then there exists a definable triangulation of $K$ such that, for each $n\le\dim K$ and for each
open $n$-simplex $\Delta$ of the triangulation,
\begin{enumerate}
\item
the graph $\Gamma :=\{(\x,t)|\> \x\in\Delta,\,t=f(\x)\}$ of the restriction of
$f$ on $\Delta$ is a regular $n$-cell (see Definition~\ref{def:cell});
\item
either $f$ is a constant on $\Delta$ or
each non-empty level set $\Gamma \cap\{t= {\rm const} \}$ is a regular $(n-1)$-cell.
\end{enumerate}
\end{conjecture}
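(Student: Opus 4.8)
The plan is to build the triangulation in two stages and to organize the whole argument as an induction on $\dim K$. The first stage is a \emph{continuity} triangulation, which disposes of part (1) almost for free; the second is a \emph{monotonicity} refinement, which carries the entire weight of part (2). The inductive idea is that once $f$ has been rendered ``monotone'' on an open $n$-simplex $\Delta$, each nonempty level set becomes the graph of the restriction of $f$ to an $(n-1)$-dimensional coordinate slice, to which a one-dimension-lower version of the theory applies.

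For part (1), I would first invoke the o-minimal definable triangulation theorem to produce a definable triangulation of $K$, compatible with $f$, in which $f$ extends continuously to each \emph{closed} simplex $\overline\Delta$. Granting this, part (1) is automatic: for an open $n$-simplex $\Delta$ the coordinate projection $\pi\colon(\x,t)\mapsto\x$ restricts to a continuous bijection of $\overline\Gamma=\{(\x,f(\x))\mid \x\in\overline\Delta\}$ onto the closed ball $\overline\Delta$, hence (compact onto Hausdorff) a homeomorphism carrying $\Gamma$ onto $\Delta$ and the frontier of $\Gamma$ onto $\partial\Delta$. Thus $\Gamma$ is a regular $n$-cell in the sense of Definition~\ref{def:cell}, and the real substance of the conjecture resides in part (2).

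For part (2) the aim is to refine the triangulation so that on each open simplex $\Delta$ the function $f$ is \emph{coordinate-monotone}. Concretely, after selecting $n$ of the ambient coordinates as parameters (so that the coordinate projection maps $\Delta$ homeomorphically, indeed linearly, onto an open set $U\subset\Real^n$) and relabelling, the graph of $f|_\Delta$, viewed in $\Real^{n+1}$ with the extra coordinate $t$, should satisfy the connectivity property defining semi-monotonicity: its intersection with every translate of every coordinate cone is connected. For such a ``monotone'' graph a level set $\Gamma\cap\{t=c\}$ is exactly its intersection with a translate of the coordinate cone $\{t=0\}$, and one expects this slicing to preserve the defining connectivity, so that the level set is again a monotone graph over a coordinate subspace, now of dimension $n-1$. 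Applying the regularity statement for such objects — the analogue, one dimension lower, of the main theorem that every semi-monotone set is a regular cell — then yields that each nonempty level set is a regular $(n-1)$-cell.

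The main obstacle, and the reason this remains a conjecture, is the existence of the monotonicity refinement together with a regularity theorem strong enough to cover the graphs and level sets it produces. Producing a definable triangulation on whose open simplices an arbitrary definable $f$ becomes coordinate-monotone is a genuine several-variable strengthening of the classical monotonicity theorem; it is precisely the ``regular triangulation'' sought in Gabrielov's program. Moreover, the main theorem here guarantees regularity only for full-dimensional semi-monotone \emph{sets}, whereas the graph of a monotone function is never open, so one needs the parallel theory of \emph{monotone functions} (graphs satisfying the coordinate-cone connectivity property) and a proof that both their graphs and, crucially, their level sets are regular cells. Showing that slicing a monotone graph by a coordinate hyperplane returns an object of the same type, compatibly with the inductive descent on dimension, is where I expect the real difficulty to concentrate.
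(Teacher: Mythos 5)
You should first be aware that this statement is not a theorem of the paper at all: it is Conjecture~\ref{con:triang}, stated in the introduction as a key \emph{unproved} step in Gabrielov's program, and the paper's actual result (Theorem~\ref{regularcell}, that open semi-monotone sets are regular cells) is offered only as a tool toward it. So there is no proof in the paper to compare yours against, and your proposal has to stand on its own as a complete argument. It does not, and you say so yourself: for part (2) you explicitly leave unproved both the ``monotonicity refinement'' of the triangulation and the regularity theorem for monotone graphs and their level sets. That honest assessment is in fact an accurate description of the program sketched in the paper's introduction (and pursued in later work of the same authors), so your strategy for part (2) is a reasonable research plan --- but it is a plan, not a proof.

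The genuine error is your claim that part (1) is ``automatic'' and ``almost for free''. The triangulation you invoke --- one in which $f$ extends continuously to each \emph{closed} simplex --- is not supplied by any standard o-minimal triangulation theorem (the triangulation-of-functions theorem, \cite{Coste}, Th.~4.5, requires $f$ to be continuous on the compact set $K$), and in general it does not exist. Take $K=[-1,1]^2$ and $f(x,y)=xy/(x^2+y^2)$ with $f(0,0)=0$. In any finite definable triangulation of $K$, the union of the closures of the open $2$-simplices whose closures contain the origin covers a neighbourhood of the origin up to a set of dimension $\le 1$; being closed, this union covers a full neighbourhood, and since finitely many cones of dimension $\le 1$ cannot cover a neighbourhood of $0$, at least one such open $2$-simplex $\Delta$ has a two-dimensional tangent cone at the origin. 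Hence there are sequences in $\Delta$ converging to $0$ with two distinct limit directions $\theta_1\ne\theta_2$ satisfying $\sin 2\theta_1\neq\sin 2\theta_2$; since $f$ is constant on rays through the origin, equal to $\frac{1}{2}\sin 2\theta$ on the ray of direction $\theta$, the restriction $f|_{\Delta}$ has two distinct partial limits at $0$ and admits no continuous extension to $\overline{\Delta}$, so your projection argument collapses on this simplex. (This does not refute part (1): over such a $\Delta$ the closure of the graph is a ``blow-up'' of $\overline{\Delta}$ in which the vertex at the origin is stretched into a segment, and that pair may still be a regular cell --- but that is exactly the kind of nontrivial phenomenon the conjecture is about, not something a projection homeomorphism can see.) Section~\ref{sec:VDD} of the paper makes the same cautionary point: graphs of continuous, coordinate-wise monotone functions over nice cells (vdD-regular cells) can fail to be topologically regular, so nothing of the form ``graph over a simplex'' comes for free. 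In short, part (1), like part (2), is open content of the conjecture, not a preliminary you can dispose of by citing a triangulation theorem.
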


It should be pointed out that Conjecture~\ref{con:triang} does not follow from results
in the literature
on the existence of definable triangulations adapted to a given finite
family of definable subsets of $\Real^n$ (such as \cite{VDD, Coste}), since all the proofs
use a preparatory linear change of coordinates in order for the given definable sets
to be in a good position with respect to coordinate projections.
Since we are concerned with the graphs and the level sets of a function, in order
to prove Conjecture~\ref{con:triang} we are not allowed to make any change of coordinates
which involves the last coordinate.
Thus, the standard methods of obtaining a definable triangulation using
``cylindrical decomposition'' are not immediately applicable.
In the book \cite{VDD}, van den Dries describes
a strong form of cylindrical decomposition in which the cells are defined by
functions having coordinate-wise monotonicity property (such cells are
called regular in \cite{VDD}).
We show that in fact these cells are not necessarily
regular cells in the sense of topology (see Definition~\ref{def:cell}).
To prove Conjecture~\ref{con:triang}, we need a sufficiently general class of definable sets
which are guaranteed to be topologically regular cells.

In this paper, we introduce a new class of definable sets,
which we call semi-monotone sets, and show
that an open definable semi-monotone set in $\Real^n$ is a regular $n$-cell.
A coordinate cone in $\Real^n$ is an intersection of some coordinate
hyperplanes and open coordinate half-spaces.
A semi-monotone set is a definable in an o-minimal structure over the reals,
open bounded subset of $\Real^n$ such that its intersection with any translation of any
coordinate cone is connected.
It is obvious that every convex definable bounded open set is semi-monotone.
Some non-convex examples
as well as counter-examples  are shown in
Figure~\ref{fig:examplesandcounterexamples}.
The paper is organized as follows.
In Section~\ref{sec:definitions} we define a semi-monotone set and prove necessary
and sufficient conditions for an open bounded set to be semi-monotone, which are similar
to the properties of cylindrical cells in o-minimal geometry.
In particular, it is proved that any semi-monotone set is a ``band'' between the graphs
of two semi-continuous functions which are defined on a semi-monotone set of a smaller
dimension and satisfy certain monotonicity properties.

Section~\ref{sec:regular} contains the proof of the main result, that every semi-monotone set
is a regular cell.
In Section~\ref{sec:real-closed} we prove the regularity in the case of semi-algebraic
semi-monotone sets defined over an arbitrary real closed field.
In Section~\ref{sec:VDD} we show that cylindrical cells called ``regular'' in \cite{VDD} are
not necessarily topologically regular.

In Section~\ref{sec:boolean} a concept of a regular Boolean function is introduced.
A Boolean function $\psi(\xi_1,\dots,\xi_n)$ in $n$ Boolean variables
$\xi_j\in\{0,1\}$ is called {\em regular} if the result of any sequence of operations
$\forall \xi_j$ and $\exists \xi_k$ applied to $\psi$ does not depend on the order of the operations.
To every point $p$ outside a given open bounded set $U$ we assign a Boolean function,
taking the value 1 exactly on the octants with the vertex $p$ which have a non-empty intersection
with $U$ (see Definition~\ref{def:assigne-boolean}).
The main result of Section~\ref{sec:boolean} is that $U$ is semi-monotone if and only if
the functions assigned to all points outside $U$ are regular.

Section~\ref{sec:appendix} is the Appendix containing some known and new facts from PL topology
needed in the proof of the main result.

\subsection*{Acknowledgements} We thank S.~Ferry, J.~McClure, and N.~Mnev for useful discussions.

\section{Equivalent definitions of semi-monotone sets}\label{sec:definitions}

In what follows we fix an o-minimal structure over $\Real$, and consider only sets and maps
that are definable in this structure.

\begin{definition}
\label{new}
Let
$$X_{j,\sigma,c}:=\{\x=(x_1, \ldots , x_n) \in \Real^n|\>  x_j\,\sigma\,c\},$$
for $1\le j\le n,\; \sigma\in\{<,=,>\}$ and $c\in\Real$.
An open (possibly, empty) bounded set $U\subset\Real^n$ is called {\sl semi-monotone}
if
$$U \cap X_{j_1,\sigma_1,c_1}\cap \cdots \cap X_{j_k,\sigma_k,c_k}$$
is connected for any $0\le k \le n$, any $1\le j_1< \dots <j_k \le n$, any
$\sigma_1,\dots,\sigma_k$ in $\{<, =, >\}$, and any $c_1, \dots, c_k \in \Real$.
\end{definition}

\vspace*{-1in}
\begin{figure}[hbt]
         \centerline{
           \scalebox{0.60}{
             \includegraphics{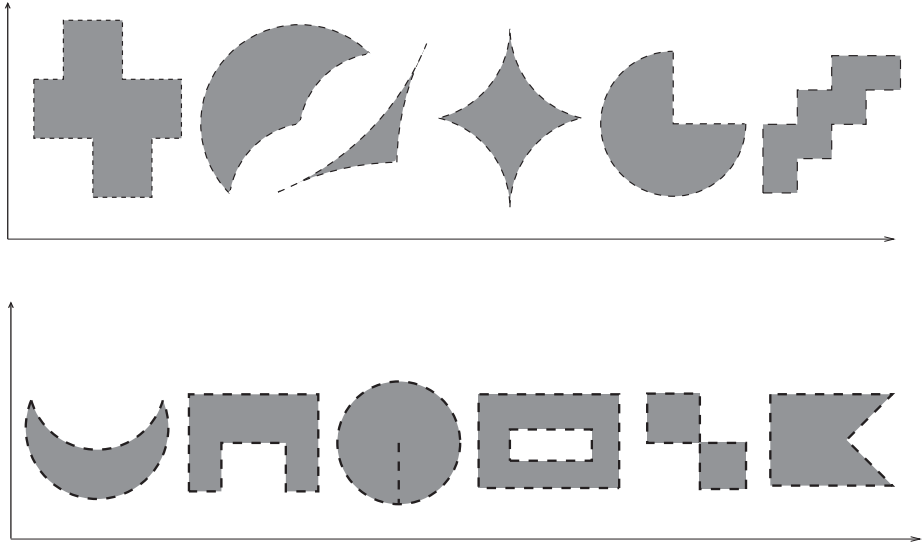}
             }
           }
\vspace*{-1.5in}
\caption{Top: examples of semi-monotone subsets of the plane.
Bottom: examples of open subsets of the plane
which are not semi-monotone.}
         \label{fig:examplesandcounterexamples}
 \end{figure}

\begin{lemma}\label{le:projection}
The projection of a semi-monotone set $U$ on any coordinate subspace is a semi-monotone set.
\end{lemma}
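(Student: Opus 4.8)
The plan is to exploit the fact that a coordinate-cone slice of the projection is exactly the projection of a coordinate-cone slice of $U$, and then to invoke the stability of connectedness under continuous maps. Let $S\subseteq\{1,\dots,n\}$ index the target coordinate subspace, let $\pi$ be the corresponding coordinate projection, and set $V:=\pi(U)$. First I would dispose of the structural requirements of Definition~\ref{new}: the set $V$ is open because a coordinate projection is an open map, it is bounded because $U$ is bounded, and it is definable because projections of definable sets are definable in an o-minimal structure (projection corresponds to existential quantification). Thus only the connectivity condition remains to be checked.

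The heart of the argument is a single set identity. Fix a coordinate cone
$C=X_{j_1,\sigma_1,c_1}\cap\cdots\cap X_{j_k,\sigma_k,c_k}$
lying in the target subspace, so every index $j_\ell$ belongs to $S$. Let $\tilde C\subset\Real^n$ be the coordinate cone defined by the \emph{same} conditions $x_{j_\ell}\,\sigma_\ell\,c_\ell$, imposing no condition on the coordinates outside $S$. This $\tilde C$ is again an admissible coordinate cone for Definition~\ref{new} in $\Real^n$, and since its defining conditions involve only $S$-coordinates we have $\pi^{-1}(C)=\tilde C$. A short set-theoretic check, using that a point of $U$ and its projection agree in exactly the coordinates constrained by $C$, then yields
$$V\cap C=\pi(U\cap\tilde C).$$

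To finish, I would observe that because $U$ is semi-monotone, the slice $U\cap\tilde C$ is connected. As $\pi$ is continuous and the continuous image of a connected set is connected, the set $V\cap C=\pi(U\cap\tilde C)$ is connected, which is precisely what Definition~\ref{new} requires of $V$. Since $C$ was an arbitrary coordinate cone in the target subspace, this establishes that $V$ is semi-monotone.

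I do not anticipate a genuine obstacle here. The only two points demanding care are verifying both inclusions in the identity $V\cap C=\pi(U\cap\tilde C)$, and confirming that the lifted set $\tilde C$ is itself a legitimate coordinate cone so that the semi-monotonicity hypothesis on $U$ actually applies to it; everything beyond that is the routine behaviour of connectedness and of projections. Note also that no induction on the number of forgotten coordinates is needed, since the argument handles an arbitrary coordinate subspace in one step; alternatively, one could project out a single coordinate at a time and compose.
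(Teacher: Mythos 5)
Your proof is correct and follows essentially the same route as the paper: both arguments rest on the identity that a coordinate-cone slice of the projection equals the projection of the corresponding coordinate-cone slice of $U$, which is connected by semi-monotonicity of $U$, so its continuous image is connected. Your version is slightly more meticulous (explicitly checking openness, boundedness, definability, and the set identity $V\cap C=\pi(U\cap\tilde C)$), but the mathematical content is identical to the paper's one-line argument.
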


\begin{proof}
Let $U'$ be the projection of $U$ on the subspace of coordinates $x_1, \ldots ,x_m$ where
$m \le n$.
Then any intersection
$$U' \cap X'_{j_1,\sigma_1,c_1} \cap \cdots \cap X'_{j_k,\sigma_k,c_k},$$
where $X'_{j,\sigma,c}=\{ (x_1, \ldots ,x_m) \in \Real^m|\> x_j\,\sigma\,c \}$ and
$1 \le j_1< \cdots <j_k \le m$, is connected as the projection of a connected set
$$U \cap X_{j_1,\sigma_1,c_1} \cap \cdots \cap X_{j_k,\sigma_k,c_k}.$$
\end{proof}

\begin{theorem}\label{th:inductive}
An open and bounded set $U\subset\Real^n$ is semi-monotone
if and only if both of the following conditions hold:
\begin{itemize}
\item[(S1)]
its intersection with any line parallel to the $x_n$-axis
is either empty or an open interval,
\item[(S2)]
projections of the sets $U\cap X_{n,\sigma,c}$ to
$\Real^{n-1}$ along the $x_n$-axis are semi-monotone sets
in $\Real^{n-1}$ for any $\sigma\in\{<,=,>\}$ and $c\in\Real$.
\end{itemize}
\end{theorem}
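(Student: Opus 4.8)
The plan is to treat the two implications separately, with the ``only if'' direction being essentially formal and the ``if'' direction carrying the real content. Throughout let $\pi\colon\Real^n\to\Real^{n-1}$ be the projection that forgets $x_n$. For a coordinate cone $C'\subset\Real^{n-1}$, its preimage $\pi^{-1}(C')$ is the coordinate cone in $\Real^n$ given by the same constraints (and no constraint on $x_n$), and for every $W\subset\Real^n$ one has
\[
\pi(W)\cap C'=\pi\bigl(W\cap\pi^{-1}(C')\bigr).
\]
The one genuinely topological ingredient I would isolate first is a \emph{Lifting Lemma}: if $A$ is a relatively open subset of a coordinate subspace $L=L'\times\Real$, if $\pi(A)$ is connected, and if every fibre $A\cap\pi^{-1}(\mathbf{y})$ is connected, then $A$ is connected. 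This follows by the standard argument: a partition $A=A_1\sqcup A_2$ into nonempty relatively open pieces cannot split any (connected) fibre, so $\pi(A_1)$ and $\pi(A_2)$ are disjoint; since $A$ is open in $L$ the projection is an open map, so these images are open, nonempty, and cover the connected set $\pi(A)$, a contradiction.

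For the ``only if'' direction I would argue as follows. A line parallel to the $x_n$-axis is exactly a set $X_{1,=,a_1}\cap\cdots\cap X_{n-1,=,a_{n-1}}$, so its intersection with $U$ is connected by semi-monotonicity and relatively open in the line, hence an open interval or empty; this gives (S1). For (S2), fix $\sigma\in\{<,=,>\}$, $c\in\Real$, put $W:=U\cap X_{n,\sigma,c}$ and $V_{\sigma,c}:=\pi(W)$, and let $C'$ be any coordinate cone in $\Real^{n-1}$ built from constraints on $x_1,\dots,x_{n-1}$. By the displayed identity, $V_{\sigma,c}\cap C'=\pi\bigl(W\cap\pi^{-1}(C')\bigr)$, and $W\cap\pi^{-1}(C')$ is the intersection of $U$ with a single coordinate cone, hence connected; since continuous images of connected sets are connected, $V_{\sigma,c}\cap C'$ is connected. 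As $V_{\sigma,c}$ is also bounded and open (openness when $\sigma$ is $=$ coming from $\pi$ being a homeomorphism on $\{x_n=c\}$), it is semi-monotone, giving (S2).

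For the ``if'' direction I would first extract the band structure from (S1): taking $\sigma$ to be $<$ and $c>\sup_{\x\in U}x_n$ gives $U\cap X_{n,<,c}=U$, so $\pi(U)=V_{<,c}$ is semi-monotone by (S2), and over each point of $\pi(U)$ the fibre of $U$ is an open interval. Now fix an arbitrary coordinate cone $C=X_{j_1,\sigma_1,c_1}\cap\cdots\cap X_{j_k,\sigma_k,c_k}$ and split into cases according to the constraint on $x_n$. If some $\sigma_i$ is $=$ with $j_i=n$, then $\pi$ restricts to a homeomorphism carrying $U\cap C$ onto $V_{=,c_i}\cap C'$ for the cone $C'\subset\Real^{n-1}$ recording the remaining constraints, and this is connected since $V_{=,c_i}$ is semi-monotone. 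Otherwise $x_n$ is either unconstrained or cut by a strict inequality $x_n\,\sigma\,c$; letting $L=\pi^{-1}(L')$ be the coordinate subspace defined by the equality constraints among $x_1,\dots,x_{n-1}$, the set $U\cap C$ is relatively open in $L$, its projection equals $V_{\sigma,c}\cap C'$ for the appropriate cone $C'$ (connected by (S2), with $V_{\sigma,c}=\pi(U)$ when $x_n$ is unconstrained), and each fibre is an open interval intersected with a half-line, hence an interval. The Lifting Lemma then gives that $U\cap C$ is connected, which is exactly what semi-monotonicity of $U$ requires.

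The main obstacle I anticipate is precisely the bookkeeping in the ``if'' direction that forces the Lifting Lemma to be stated relative to a subspace $L$ rather than in $\Real^n$: the equality constraints on coordinates other than $x_n$ make $U\cap C$ fail to be open in $\Real^n$, so one cannot project inside the whole space. Getting the identifications $\pi(U\cap C)=V_{\sigma,c}\cap C'$ exactly right in each case, and confirming that $C'$ really is a single coordinate cone in $\Real^{n-1}$ so that semi-monotonicity of $V_{\sigma,c}$ applies, is the delicate part; the openness of the projection, which powers the Lifting Lemma, is what ultimately converts connectedness of the base together with connectedness of the fibres into connectedness of the total space.
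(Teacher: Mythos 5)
Your proof is correct, and it follows the same basic skeleton as the paper's (project along $x_n$, split into cases according to whether the cone constrains $x_n$ by an equality, a strict inequality, or not at all, and combine connectedness of the base with connectedness of the interval fibres), but it differs in two genuine ways. First, the paper runs the whole argument as an induction on $n$ and invokes the inductive hypothesis to get connectedness of the cone sections of the projections; you observe, correctly, that no induction is needed, since semi-monotonicity of the projections $V_{\sigma,c}$ (which is exactly what (S2) asserts) already means, by Definition~\ref{new} itself, that every intersection $V_{\sigma,c}\cap C'$ with a coordinate cone $C'\subset\Real^{n-1}$ is connected. Second, where the paper cites the Vietoris--Begle theorem to pass from connectedness of the base and of the fibres to connectedness of the total set, you prove an elementary point-set ``Lifting Lemma'' for relatively open subsets of coordinate subspaces, using only that the coordinate projection is an open map; this is a real simplification, since Vietoris--Begle is far stronger than what is needed and its application to a non-proper projection requires some care. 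Your version is also more scrupulous on a point the paper glosses over: you check that the projections $V_{\sigma,c}$ are open (via the homeomorphism $\pi|_{\{x_n=c\}}$ in the case $\sigma$ is ``$=$''), which is needed for them to qualify as semi-monotone in the ``only if'' direction, and your formulation of the Lifting Lemma relative to the subspace $L$ correctly handles the fact that equality constraints make $U\cap C$ non-open in $\Real^n$. What the paper's route buys is brevity by appeal to a standard theorem; what yours buys is a self-contained and slightly more careful argument.
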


\begin{proof}
We prove the statement by induction on $n$. For $n=1$ it is obvious.
Let $U$ satisfy (S1) and (S2).
The set $U$ is connected, otherwise its projection $U'$ along
the $x_n$-axis would be not connected (a very special case
of the Vietoris-Begle theorem, \cite{Spanier}).
This would contradict (S2), since $U=U\cap X_{n,<,c}$ for large positive $c$ and hence
its projection is connected.

For $j_k<n$, the projection of
$$U\cap X_{j_1,\sigma_1,c_1}\cap\dots\cap X_{j_k,\sigma_k,c_k}$$
to $\Real^{n-1}$ is equal to
$$U'\cap X'_{j_1,\sigma_1,c_1}\cap\dots\cap X'_{j_k,\sigma_k,c_k},$$
where $X'_{j,\sigma,c}=\{ \x=(x_1, \ldots ,x_{n-1}) \in \Real^{n-1}|\> x_j\,\sigma\,c \}$.
This set is connected by the inductive hypothesis, hence
$$U\cap X_{j_1,\sigma_1,c_1}\cap\dots\cap X_{j_k,\sigma_k,c_k}$$
is connected, by the Vietoris-Begle theorem.

For $j_k=n$, the projection of
$$U\cap X_{j_1,\sigma_1,c_1}\cap\dots\cap X_{j_k,\sigma_k,c_k}$$
to $\Real^{n-1}$ is equal to the intersection of the projection of
$U\cap X_{n,\sigma_k,c_k}$ and the set
$$X'_{j_1,\sigma_1,c_1}\cap\dots\cap X'_{j_{k-1},\sigma_{k-1},c_{k-1}}.$$
It is connected due to condition (S2) and the induction hypothesis,
hence
$$U\cap X_{j_1,\sigma_1,c_1}\cap\dots\cap X_{j_k,\sigma_k,c_k}$$
is connected, again by the Vietoris-Begle theorem.

Conversely, if $U$ is a semi-monotone set,
its intersection with each line parallel to any coordinate axis
is connected, i.e., either empty or an open interval.
Since all sets
$$U\cap X_{n,\sigma,c}\cap X_{j_1, \sigma_1,c_1}\cap \dots \cap X_{j_k, \sigma_k, c_k}$$
are connected, their projections along the $x_n$-axis are connected.
This implies that projections of the sets
$U\cap X_{n,\sigma,c}$ along the $x_n$-axis are semi-monotone sets in $\Real^{n-1}$.
\end{proof}

\begin{corollary}\label{slice}
\begin{enumerate}
\item
If $U \subset \Real^n$ is a semi-monotone set, then $U\cap X_{j, <, a}
\cap X_{j, >, b}$
is a semi-monotone set in $\Real^n$ for any $1\le j\le n$ and $a, b \in\Real$.
\item
If $U \subset \Real^n$ is a semi-monotone set, then
$$U\cap X_{j_1,\sigma_1,c_1}\cap \cdots \cap X_{j_k,\sigma_k,c_k}$$
is semi-monotone for any $0\le k \le n$, any $1\le j_1< \dots <j_k \le n$,
any $\sigma_1,\dots,\sigma_k$ in $\{<, =, >\}$, and any $c_1, \dots, c_k \in \Real$.
\end{enumerate}
\end{corollary}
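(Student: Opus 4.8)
The plan is to prove (1) directly from Definition~\ref{new}, and then to deduce (2) from (1) by a short iteration. Since semi-monotonicity is invariant under permutations of the coordinates (Definition~\ref{new} ranges over all index subsets), I may assume $j=n$, and I may assume $b<a$, as otherwise the set is empty. Write $W:=U\cap X_{n,<,a}\cap X_{n,>,b}=U\cap\{b<x_n<a\}$, which is open and bounded. To show that $W$ is semi-monotone I must check that $W\cap C$ is connected for every coordinate cone $C:=X_{j_1,\sigma_1,c_1}\cap\cdots\cap X_{j_k,\sigma_k,c_k}$. I combine the two-sided condition $b<x_n<a$ with whatever condition (if any) $C$ imposes on $x_n$. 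If $C$ contains $X_{n,=,c}$, the combined condition is either empty or the single equality $x_n=c$ (when $b<c<a$), and then $W\cap C$ equals $U$ intersected with a genuine coordinate cone, hence is connected by Definition~\ref{new}. In every remaining case the combined condition on $x_n$ is a (possibly empty or narrower) two-sided band $b'<x_n<a'$, while the conditions of $C$ on the other coordinates form a coordinate cone $D$ in $\Real^{n-1}$; thus $W\cap C=U\cap\{b'<x_n<a'\}\cap\widetilde D$, where $\widetilde D\subset\Real^n$ is the cylinder over $D$.

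The crux is therefore to prove that $R:=U\cap\{b'<x_n<a'\}\cap\widetilde D$ is connected. The naive approach of projecting $R$ to $\Real^{n-1}$ fails, because the image is the intersection of the two semi-monotone sets $\pi(U\cap X_{n,<,a'})$ and $\pi(U\cap X_{n,>,b'})$, and I have no a priori control over such an intersection. Instead I will project $R$ onto the $x_n$-axis by the map $p(\x)=x_n$. Its image is $T=(b',a')\cap p(U\cap\widetilde D)$; since $\widetilde D$ is a coordinate cone not involving $x_n$, the set $U\cap\widetilde D$ is connected by Definition~\ref{new}, so $p(U\cap\widetilde D)$ is an interval and hence $T$ is an interval. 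The fibre of $p$ over $t\in T$ is, after identifying the hyperplane $\{x_n=t\}$ with $\Real^{n-1}$, the set $\pi\big(U\cap X_{n,=,t}\cap\widetilde D\big)$, i.e. $U$ intersected with the coordinate cone $X_{n,=,t}\cap\widetilde D$, and is therefore connected, again by Definition~\ref{new}. A map with connected base and connected fibres has connected total space (the special case of the Vietoris--Begle theorem already used in the proof of Theorem~\ref{th:inductive}), so $R$ is connected. This establishes~(1).

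For (2) I argue by iterating (1). Testing semi-monotonicity of $V:=U\cap C$ against an arbitrary coordinate cone $C'$ amounts to showing that $U\cap(C\cap C')$ is connected. On each coordinate the two cones $C,C'$ together impose at most two conditions, whose conjunction is either a single condition (a ray, an equality, or all of $\Real$), or empty, or a two-sided band $b_i<x_i<a_i$; let $i_1,\dots,i_r$ be the coordinates of this last type. Applying (1) successively to these coordinates, the sets $U$, $U\cap\{b_{i_1}<x_{i_1}<a_{i_1}\}$, $\dots$ all remain open and semi-monotone, so after $r$ steps I obtain a semi-monotone set $U^{\ast}$ for which $U\cap(C\cap C')=U^{\ast}\cap(\text{coordinate cone of the remaining single conditions})$; this is connected by Definition~\ref{new}. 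Hence $V$ is semi-monotone, interpreted—when some $\sigma_i$ is ``$=$''—inside the coordinate subspace cut out by the corresponding equalities, where $V$ is open.

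The main obstacle is precisely the connectivity of the two-sided band slice $R$ in~(1): a two-sided constraint on a coordinate is not itself a coordinate cone, so Definition~\ref{new} does not apply to it directly, and, as the failed $\Real^{n-1}$-projection shows, it genuinely mixes the upper and lower parts of $U$. The idea that unlocks the proof is to project \emph{along} the very coordinate carrying the two-sided constraint, turning the band into an interval base and the cone-slices of $U$ into the fibres, so that only the definition-level connectivity of $U$ against honest coordinate cones is ever needed.
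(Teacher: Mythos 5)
Your proposal is correct, but it takes a genuinely different route from the paper's. The paper proves (1) in two lines by leaning on Theorem~\ref{th:inductive}: since Definition~\ref{new} is symmetric in the coordinates, one may assume $j<n$ (for $n\ge 2$), and then the banded set $U\cap X_{j,<,a}\cap X_{j,>,b}$ visibly satisfies (S1), while (S2) follows from Lemma~\ref{le:projection} together with the inductive hypothesis in dimension $n-1$; part (2) is then declared to follow immediately. You instead verify Definition~\ref{new} directly and without induction on $n$: you split according to whether the test cone fixes $x_n$, and in the band case you project onto the $x_n$-axis, so that the base is an interval (the image of the connected set $U\cap\widetilde D$) and the fibres are honest cone-slices of $U$. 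What your approach buys is self-containedness --- you never need the (S1)/(S2) characterization, only Definition~\ref{new} plus a connectivity-of-total-space fact --- at the price of a longer argument; the paper's proof is shorter but presupposes the heavier Theorem~\ref{th:inductive}. Your part (2), merging the two cones coordinate-by-coordinate and iterating (1), is essentially an expansion of what the paper leaves implicit, including the convention that an intersection involving equalities is semi-monotone inside the coordinate subspace they cut out. One point deserves sharpening: as you state it, ``a map with connected base and connected fibres has connected total space'' is false in general (project $([0,\tfrac{1}{2}]\times\{0\})\cup((\tfrac{1}{2},1]\times\{1\})$ onto $[0,1]$); one needs the map to be open (or closed/proper). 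In your setting this does hold: $R$ is relatively open in the affine subspace cut out by the equalities of $D$, that subspace contains the $x_n$-direction, so the coordinate projection restricted to $R$ is an open map onto its image, and then the disconnection argument goes through. This is the same implicit hypothesis under which the paper invokes the Vietoris--Begle theorem (made explicit in the proof of Corollary~\ref{co:acyclic}), so you should simply record the openness when you invoke it.
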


\begin{proof}
(1)\ The statement is obvious for $n=1$.
Let $n \ge 2$ and $j<n$.
Then the set $U\cap X_{j, <, a} \cap X_{j, >, b}$ satisfies conditions (S1) and (S2),
hence it is semi-monotone.

(2)\ Immediately follows from (1).
\end{proof}

\begin{corollary}\label{co:acyclic}
Any semi-monotone set $U$ is acyclic.
\end{corollary}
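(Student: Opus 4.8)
The plan is to argue by induction on $n$, using Theorem~\ref{th:inductive} to present $U$ as a ``band'' over its projection and then to deformation retract this band onto a single continuous section. For $n=1$ a non-empty semi-monotone set is a bounded open interval, which is contractible and hence acyclic, so assume $n\ge 2$ and let $U\subset\Real^n$ be non-empty and semi-monotone (the empty set being excluded, or treated by convention). Let $\pi\colon\Real^n\to\Real^{n-1}$ denote the projection along the $x_n$-axis and put $U'=\pi(U)$. By Lemma~\ref{le:projection} the set $U'$ is semi-monotone, hence acyclic by the induction hypothesis. By condition (S1) of Theorem~\ref{th:inductive}, for every $\x\in U'$ the fibre $\pi^{-1}(\x)\cap U$ is a non-empty open interval $(f(\x),g(\x))$, and since $U$ is bounded the functions $f,g\colon U'\to\Real$ are finite.

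First I would record the band structure. Because $U$ is open, a routine check shows that $f$ is upper semi-continuous and $g$ is lower semi-continuous on $U'$, with $f<g$ everywhere; thus $U=\{(\x,t)\mid \x\in U',\ f(\x)<t<g(\x)\}$. The key step is to insert a continuous graph strictly between these two boundaries: since $U'$ is a metric space, $f$ is upper semi-continuous, $g$ is lower semi-continuous, and $f<g$, the strict form of the Kat\v{e}tov--Tong insertion theorem (a strengthening valid on metric spaces) yields a continuous function $m\colon U'\to\Real$ with $f<m<g$. Only continuity of $m$ is needed, so definability plays no role here, although a definable $m$ could alternatively be produced from an o-minimal cell decomposition. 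Its graph $\Gamma_m=\{(\x,m(\x))\mid \x\in U'\}$ is then contained in $U$ and is homeomorphic to $U'$ via $\pi$.

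Finally I would contract $U$ onto $\Gamma_m$ along the fibres. As each fibre $(f(\x),g(\x))$ is convex and contains both $t$ and $m(\x)$, the fibrewise straight-line homotopy $H\bigl((\x,t),s\bigr)=\bigl(\x,(1-s)t+s\,m(\x)\bigr)$ takes values in $U$, fixes $\Gamma_m$ pointwise, equals the identity at $s=0$, and maps $U$ onto $\Gamma_m$ at $s=1$. Hence $H$ is a strong deformation retraction of $U$ onto $\Gamma_m\cong U'$, so $U$ and $U'$ are homotopy equivalent and $U$ is acyclic by induction.

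The main obstacle is precisely this passage from the acyclicity of the fibres and of the base to that of the total space. It is tempting to deduce it at one stroke from the Vietoris--Begle theorem applied to $\pi|_U\colon U\to U'$, exactly as in the proof of Theorem~\ref{th:inductive}; however $\pi|_U$ need not be a closed (proper) map. For the L-shaped region $U=(0,1)^2\setminus[0,1/2]^2$, for example, the set $A=\{(1/2+1/k,\,1/k)\}_{k\ge 3}$ is closed in $U$ yet its image $\pi(A)$ is not closed in $U'=(0,1)$, so the hypotheses of Vietoris--Begle fail. This is why the argument is routed instead through an explicit continuous selection $m$ and the resulting deformation retraction, and why the genuinely non-formal ingredient is the strict insertion theorem producing $m$ between the merely semi-continuous boundary functions $f$ and $g$.
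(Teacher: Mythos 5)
Your proof is correct, but it follows a genuinely different route from the paper's. The paper runs the same induction and the same reduction — the projection $U'$ is semi-monotone (Lemma~\ref{le:projection} / condition (S2) of Theorem~\ref{th:inductive}), hence inductively acyclic, and the fibres are intervals by (S1) — but then concludes in one stroke by applying the Vietoris--Begle theorem to the projection restricted to $U$, remarking only that this map is open. You replace that single step with an explicit construction: the band presentation $U=\{(\mathbf{x},t)\mid \mathbf{x}\in U',\ f(\mathbf{x})<t<g(\mathbf{x})\}$ with $f$ upper and $g$ lower semi-continuous (rederived directly from openness, which keeps you safely clear of Theorem~\ref{th:band}, whose proof itself cites Corollary~\ref{co:acyclic}), a continuous section $f<m<g$ via the strict (Dowker) form of the Kat\v{e}tov--Tong insertion theorem on the metric space $U'$, and the fibrewise linear strong deformation retraction of $U$ onto the graph of $m$, which is homeomorphic to $U'$. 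This buys two things. First, it sidesteps a real subtlety in the paper's argument that you correctly identify: the Vietoris--Begle theorem in the cited reference (Spanier) is stated for \emph{closed} surjective maps, and your L-shaped example shows the projection of a semi-monotone set is open but in general not closed, so the paper's appeal to an ``open map'' version is at least non-standard and would need separate justification (for open maps, acyclicity of the point fibres does not by itself control the Leray stalks). Second, your argument proves strictly more: $U$ is homotopy equivalent to $U'$, hence by induction contractible, not merely acyclic — a conclusion in the spirit of Theorem~\ref{regularcell}. The use of a possibly non-definable $m$ is harmless, as you note, since contractibility and acyclicity are topological invariants; a definable section could also be extracted with more work from a cell decomposition. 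The trade-off is only length: the paper's proof is three lines, yours is a page, but yours is self-contained and rigorous where the paper leans on a citation that does not literally apply.
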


\begin{proof}
We prove the statement by induction on $n$.
The base, for $n=1$ is obvious.
Applying Theorem~\ref{th:inductive}, (S2), to $U=U\cap X_{n,<,c}$ for a large positive $c$
we conclude that the projection $U'$ of $U$ along $x_n$-axis is a semi-monotone set,
and, therefore, by the inductive hypothesis, is acyclic.
By (S1), the fibres of this projection map are acyclic, so, since the projection
is an open map, the Vietoris-Begle theorem implies that $U$ is also acyclic.
\end{proof}

Note that in Theorem~\ref{regularcell} we will prove a much stronger result.

\begin{definition}
A bounded upper semi-continuous function $f$ defined on a semi-monotone
set $U \subset\Real^n$ is {\em submonotone} if, for any $r > \inf_{\x\in U} f(x)$, the set
$$\{\x \in U|\> f(\x)< r\}$$
is semi-monotone.
A function $f$ is {\em supermonotone} if $(-f)$ is submonotone.
\end{definition}

\begin{theorem}\label{th:band}
An open and bounded set $U \subset \Real^n$ is semi-monotone if and only if it satisfies
the following conditions.
If $U \subset \Real^1$ then $U$ is an open interval.
If $U \subset \Real^n$, then
$$U=\{(\x,t)|\> \x \in U',\; f(\x)<t<g(\x)\}$$
for some functions $f$ and $g$ on a semi-monotone
set $U' \subset\Real^{n-1}$, where $f(\x)<g(\x)$ for all $\x \in U'$, with
$f(\x)$ being submonotone and $g(\x)$ being supermonotone.
\end{theorem}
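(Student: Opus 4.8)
The plan is to read off both implications from the inductive characterisation in Theorem~\ref{th:inductive}, with the base case $n=1$ (a bounded connected open subset of $\Real$ is an open interval) being trivial. For the forward implication, let $U$ be semi-monotone. By (S1) every nonempty fibre of the projection along the $x_n$-axis is an open interval, so with $U'$ the projection of $U$ and
$$f(\x)=\inf\{t\mid(\x,t)\in U\},\qquad g(\x)=\sup\{t\mid(\x,t)\in U\},\quad \x\in U',$$
we get $U=\{(\x,t)\mid\x\in U',\ f(\x)<t<g(\x)\}$ with $f<g$ on $U'$. Choosing $c$ large in (S2) shows $U'$ is semi-monotone, and boundedness of $U$ makes $f,g$ bounded. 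Since $U$ is open, for each $t_0>f(\x)$ there is a neighbourhood of $\x$ on which $f<t_0$, so $f$ is upper semi-continuous; likewise $g$ is lower semi-continuous. Finally $\{\x\in U'\mid f(\x)<r\}$ and $\{\x\in U'\mid g(\x)>r\}$ are the projections of $U\cap X_{n,<,r}$ and $U\cap X_{n,>,r}$, hence semi-monotone by (S2); thus $f$ is submonotone and $g$ supermonotone.

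For the converse I assume the band representation and verify the hypotheses of Theorem~\ref{th:inductive}. Using that $f$ is upper and $g$ lower semi-continuous one builds, around any point of $U$, a product neighbourhood $V\times(t_1,t_2)$ inside $U$, so $U$ is open; it is bounded because $U'$, $f$, $g$ are. Condition (S1) is immediate, each fibre being the interval $(f(\x),g(\x))$. For (S2) with $\sigma$ equal to $<$ or $>$, the projections of $U\cap X_{n,<,c}$ and $U\cap X_{n,>,c}$ are $\{\x\in U'\mid f(\x)<c\}$ and $\{\x\in U'\mid g(\x)>c\}$, which are semi-monotone by the definitions of submonotone and supermonotone (and empty, hence admissible, outside the relevant range). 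It then remains only to treat the level case.

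The main obstacle is the case $\sigma$ equal to $=$: the projection of $U\cap X_{n,=,c}$ is $A\cap B$ with $A=\{\x\in U'\mid f(\x)<c\}$ and $B=\{\x\in U'\mid g(\x)>c\}$, and an intersection of two semi-monotone sets need not be semi-monotone, so connectivity of all coordinate-cone slices of $A\cap B$ must be proved directly. Intersecting with an arbitrary coordinate cone and writing $C$ for the intersection of $U'$ with that cone, $C$ is semi-monotone by Corollary~\ref{slice}, and the same corollary shows that $f$ and $g$ restricted to $C$ stay submonotone and supermonotone (their sub- and superlevel sets on $C$ are $\{f<r\}$, resp.\ $\{g>r\}$, intersected with the cone). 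Hence it suffices to prove that $P\cap Q$ is connected, where $P=\{\x\in C\mid f(\x)<c\}$ and $Q=\{\x\in C\mid g(\x)>c\}$ are semi-monotone open subsets of $C$.

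The key point is that $P\cup Q=C$: a point of $C$ lying in neither would satisfy $g(\x)\le c\le f(\x)$, contradicting $f<g$. Now $P$, $Q$ and $C$ are all acyclic by Corollary~\ref{co:acyclic}, so the reduced Mayer--Vietoris sequence of the open cover $C=P\cup Q$,
$$H_1(C)\to\tilde H_0(P\cap Q)\to\tilde H_0(P)\oplus\tilde H_0(Q),$$
collapses to $0\to\tilde H_0(P\cap Q)\to 0$, giving $\tilde H_0(P\cap Q)=0$. Thus $P\cap Q$ is connected (or empty), every coordinate-cone slice of $A\cap B$ is connected, $A\cap B$ is semi-monotone, and (S2) holds for $\sigma$ equal to $=$ as well; Theorem~\ref{th:inductive} then yields that $U$ is semi-monotone. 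I expect this acyclicity/Mayer--Vietoris step to be the heart of the argument, as it is precisely where the bare connectivity of $P$ and $Q$ would not suffice.
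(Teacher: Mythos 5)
Your proof is correct, and while your forward direction coincides with the paper's (fibre endpoints define $f,g$; upper semi-continuity of $f$ follows from openness of $U$; the sublevel sets of $f$ are the projections of the sets $U\cap X_{n,<,r}$ and hence semi-monotone), your converse takes a genuinely different route. The paper proves the converse by induction on $n$: for the level slice it distinguishes whether the coordinate cone contains an equality constraint (that case is absorbed into the inductive hypothesis, since the band conditions restrict to translated cones) and otherwise reduces to connectedness of $U\cap X_{n,=,c}$ itself, proved by contradiction --- a disconnection would, via the Vietoris--Begle theorem applied to the projection on the $x_{n-1}$-axis (whose fibres are acyclic by the inductive hypothesis and Corollary~\ref{co:acyclic}), produce a $t_0$ with $U\cap X_{n,=,c}\cap X_{n-1,=,t_0}=\emptyset$ yet both sides nonempty, and then openness of $U$ together with connectedness of $U\cap X_{n,<,c}$ and $U\cap X_{n,>,c}$ would disconnect $U\cap X_{n-1,=,t_0}$, contradicting what was already established. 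You instead note that every coordinate-cone slice of the projection of $U\cap X_{n,=,c}$ has the form $P\cap Q$ with $P\cup Q=C$, where $P$, $Q$, $C$ are semi-monotone (by the sub/supermonotonicity hypotheses together with Corollary~\ref{slice}), hence acyclic by Corollary~\ref{co:acyclic}, so the reduced Mayer--Vietoris sequence gives $\tilde H_0(P\cap Q)=0$. This eliminates both the induction on $n$ and the case analysis from the converse, at the cost of invoking Mayer--Vietoris (machinery the paper itself uses elsewhere, e.g.\ in Lemma~\ref{4D}), and it isolates the actual mechanism: an acyclic union of two acyclic open sets has connected intersection. Two small points you should make explicit: the reduced Mayer--Vietoris sequence is exact only when $P\cap Q\neq\emptyset$, so the empty case needs a separate (but immediate) remark --- if $P\cap Q=\emptyset$ with $P$ and $Q$ both nonempty, the connected semi-monotone set $C$ would be disconnected, so one of $P$, $Q$ is then empty and the slice is empty, which is admissible; and your re-derivation of the openness of $U$ is redundant, since $U$ is assumed open in the statement of the theorem.
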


\begin{proof}
Suppose that $U$ is semi-monotone, and $U'$ is the projection of $U$ on the subspace of
coordinates $x_1, \ldots ,x_{n-1}$.
By the Lemma~\ref{le:projection}, $U'$ is a semi-monotone set.
According to (S1) of the Theorem~\ref{th:inductive}, any fibre of the projection map over
a point $\x \in U'$ is an open interval.
Take the lower endpoints of these intervals as values of $f$ and upper endpoints as values of $g$.
It follows that
$$U=\{(\x,t)|\> \x\in U',\;f(\x)<t<g(\x)\}.$$

The function $f$ is bounded because $U$ is bounded.
The function $f$ is upper semi-continuous since otherwise there would exist a sequence
$\x^{(i)} \in U'$ with $\lim_{i \to \infty} \x^{(i)}=\x^{(0)} \in U'$ such that
$\lim_{i \to \infty} f(\x^{(i)})- f(\x^{(0)}) > \eps$ for some positive $\eps \in \Real$.
Then the interval with the lower endpoint $f(\x^{(0)})$ has a point belonging both to $U$
and to the boundary of $U$, which contradicts to the openness of $U$.

Let $r > \inf_{\x \in U'} f(\x)$.
The definition of $f$ implies that the set
$$S_r :=\{\x \in U'|\> f(\x)< r\}$$
is the projection on the subspace of coordinates
$x_1, \ldots ,x_{n-1}$ of the intersection $U \cap X_{n,<,r}$.
According to the Corollary~\ref{slice}, $U \cap X_{n,<,r}$ is a semi-monotone set, thus,
by the Lemma~\ref{le:projection}, its projection $S_r$ is also semi-monotone.
It follows that $f$ is submonotone.

Similarly, the function $g$ is supermonotone.

We proved that the semi-monotone $U$ satisfies the conditions in the theorem.
Now assume that an open and bounded set $U \subset \Real^n$ satisfies these conditions,
in particular, its projection $U'$ is semi-monotone.
We prove that $U$ is semi-monotone by induction on $n$, the base for $n=1$ being trivial.

According to the Theorem~\ref{th:inductive}, it is sufficient to prove that $U$ satisfies
conditions (S1) and (S2).
The condition (S1) holds true because every intersection of $U$ with a straight line
parallel to $x_n$-axis is an interval $(f(\x), g(\x))$ for some $\x \in U'$.

For any $c \in (\inf_{\x \in U'}f(\x), \sup_{\x \in U'}g(\x))$
the projection of the set $U \cap X_{n,< ,c}$ to the subspace of coordinates $x_1, \ldots, x_{n-1}$
coincides with $\{\x \in U'|\> f(\x)< c \}$ and therefore is a semi-monotone set.
Similarly, the projection of $U \cap X_{n,> ,c}$ is a semi-monotone set.
By Vietoris-Begle theorem, both sets $U \cap X_{n,< ,c}$ and $U \cap X_{n,> ,c}$ are connected.

To satisfy condition (S2) of the Theorem~\ref{th:inductive}
it remains to prove that the projection $W$ of $U \cap X_{n, = ,c}$ is also a semi-monotone set.
We will prove this by showing that any intersection of the kind
$W \cap X_{j_1,\sigma_1,c_1}\cap \cdots \cap X_{j_k,\sigma_k,c_k}$ is connected, where
$j_1 < \cdots <j_k < n$ and $\sigma_1, \ldots ,\sigma_k \in \{ <, =, > \}$.
For this, it is enough to prove that any intersection of the kind
$U \cap X_{n, = ,c} \cap X_{j_1,\sigma_1,c_1}\cap \cdots \cap X_{j_k,\sigma_k,c_k}$ is connected.
If at least one $\sigma_i$ is $=$, then the connectedness
follows from the inductive hypothesis, since the conditions in the theorem are
compatible with the translated coordinate cones
$X_{j_1,\sigma_1,c_1}\cap \cdots \cap X_{j_k,\sigma_k,c_k}$.
Otherwise, $U \cap X_{j_1,\sigma_1,c_1}\cap \cdots \cap X_{j_k,\sigma_k,c_k}$ is itself a bounded
open set in $\Real^n$ satisfying the conditions of the theorem, and it remains to prove that
the intersection of this set with $X_{n, = ,c}$ or, without a loss of generality, the
intersection $U \cap X_{n, = ,c}$, is connected.

Suppose that $U \cap X_{n, = ,c}$ is not connected.
Every fibre $U \cap X_{n, = ,c} \cap X_{n-1, =,t}$ of the projection of $U \cap X_{n, = ,c}$
on the $x_{n-1}$-axis is a semi-monotone cell by the inductive hypothesis hence, by the
Corollary~\ref{co:acyclic}, is acyclic.
Then Vietoris-Begle theorem implies that the image of the projection of $U \cap X_{n, = ,c}$
also is not connected, i.e., there is a point $t_0$ such that
$U \cap X_{n, = ,c} \cap X_{n-1, =,t_0}= \emptyset$ while both sets
$U \cap X_{n, = ,c} \cap X_{n-1, <,t_0}$ and $U \cap X_{n, = ,c} \cap X_{n-1, >,t_0}$ are
non-empty.
Because $U$ is open while the sets $U \cap X_{n, < ,c}$ and $U \cap X_{n, > ,c}$ are
connected, each of them has a non-empty intersection with $X_{n-1, =,t_0}$.
But this implies that $U \cap X_{n-1, =,t_0}$ is not connected which contradicts what
was proved before.
\end{proof}

\section{Semi-monotone sets are regular cells}\label{sec:regular}

Any compact definable set in $\Real^n$ admits a finite triangulation
(see, e.g., \cite{VDD}), in particular is definably homeomorphic to a polyhedron.
Any open set in $\Real^n$ is a polyhedron.

\begin{definition}\label{def:cell}
A definable set $U$ is called a {\em regular $k$-cell} if the pair $(\overline U, U)$ is
definably homeomorphic to the pair $([-1,1]^k, (-1,1)^k)$.
\end{definition}

In this section we say that a definable set {\em is a closed $n$-ball} if it is definably
homeomorphic to $[-1,1]^n$, {\em is an open $n$-ball} if it is definably
homeomorphic to $(-1,1)^n$, and {\em is an $(n-1)$-sphere} if it is definably
homeomorphic to $[-1,1]^n \setminus (-1,1)^n$.

Proposition~\ref{pr:le1.10} implies that if $U \subset \Real^n$ is an open definable set, then
$U$ is a regular cell if and only if $\overline U$ is an $n$-ball and
the {\em frontier} $\overline U \setminus U$ is an $(n-1)$-sphere.

\begin{theorem}\label{regularcell}
A semi-monotone set $U \subset \Real^n$ is a regular $n$-cell.
\end{theorem}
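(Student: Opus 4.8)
The plan is to prove the statement by induction on $n$, using the structural decomposition from Theorem~\ref{th:band} as the engine. The base case $n=1$ is trivial, since a bounded open semi-monotone set in $\Real^1$ is an open interval, which is manifestly a regular $1$-cell. For the inductive step, I would invoke Theorem~\ref{th:band} to write $U=\{(\x,t)\mid \x\in U',\; f(\x)<t<g(\x)\}$, where $U'\subset\Real^{n-1}$ is semi-monotone, $f$ is submonotone (upper semi-continuous) and $g$ is supermonotone (lower semi-continuous). By the inductive hypothesis, $U'$ is a regular $(n-1)$-cell, so there is a definable homeomorphism of pairs $(\overline{U'},U')\cong([-1,1]^{n-1},(-1,1)^{n-1})$. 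The goal is then to build a definable homeomorphism of $(\overline U,U)$ onto $([-1,1]^n,(-1,1)^n)$ that is compatible with this identification on the base.

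The natural first move is to collapse the fibre intervals $(f(\x),g(\x))$ to a standard interval. Over the open base $U'$ one can use the affine rescaling $(\x,t)\mapsto\bigl(\x,\,\frac{2t-f(\x)-g(\x)}{g(\x)-f(\x)}\bigr)$, which is a definable homeomorphism from $U$ onto the open cylinder $U'\times(-1,1)$; combined with the inductive homeomorphism on the base, this shows $U$ itself is an open $n$-ball. The real content, and the place I expect the main obstacle, is extending this to the closures. The map above is defined only over $U'$, and the functions $f,g$ are merely semi-continuous, not continuous, so the graphs $t=f(\x)$ and $t=g(\x)$ may not glue to give a manifold-with-boundary structure on $\overline U$ in any naive way. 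Controlling the frontier $\overline U\setminus U$ is therefore the crux.

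To handle this I would aim to prove, via the characterization recorded just before the theorem, that $\overline U$ is a closed $n$-ball and that the frontier $\overline U\setminus U$ is an $(n-1)$-sphere; by Proposition~\ref{pr:le1.10} these two facts together yield that $U$ is a regular $n$-cell. For the sphere claim, the frontier decomposes into the ``bottom'' graph of $f$, the ``top'' graph of $g$, and the ``side'' lying over $\overline{U'}\setminus U'$. Since $U'$ is a regular cell, its frontier $\overline{U'}\setminus U'$ is an $(n-2)$-sphere by induction, and the interval-fibration structure should let me present $\overline U\setminus U$ as a union of two $(n-1)$-disks (the two graphs, each homeomorphic to $\overline{U'}$) glued along an $(n-2)$-sphere, yielding an $(n-1)$-sphere. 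Making this gluing rigorous despite the semi-continuity of $f,g$ is exactly where I expect to lean on the PL-topology results collected in the Appendix (Section~\ref{sec:appendix}), presumably to verify that the relevant attaching maps are nice enough that the union is a manifold and that the pair is definably homeomorphic to the model pair. The submonotonicity and supermonotonicity of $f$ and $g$ should be the properties that guarantee the sublevel and superlevel sets behave coherently across the induction, preventing pathological behaviour of the graphs over the boundary of the base.

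The step I anticipate being hardest is precisely this closure/frontier analysis: upgrading the fibrewise and base-level homeomorphisms to a single definable homeomorphism of the compact pairs, without assuming continuity of the defining functions. I would expect the argument to invoke the acyclicity already established in Corollary~\ref{co:acyclic}, together with a characterization of balls and spheres (perhaps a definable or PL version of a recognition theorem for manifolds-with-boundary) from the Appendix, to certify that the candidate boundary is genuinely a sphere rather than merely an acyclic set. Everything else -- the inductive skeleton, the fibre rescaling, and the reduction via Proposition~\ref{pr:le1.10} -- should be comparatively routine once this frontier regularity is in hand.
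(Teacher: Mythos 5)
Your plan breaks down at exactly the point you identify as the crux, and the failure is structural rather than merely technical. First, the affine rescaling $(\x,t)\mapsto\bigl(\x,\tfrac{2t-f(\x)-g(\x)}{g(\x)-f(\x)}\bigr)$ is not a homeomorphism onto $U'\times(-1,1)$: since $f$ is merely upper and $g$ merely lower semi-continuous, this map is discontinuous over every discontinuity point of $f$ or $g$, and such points occur in the \emph{interior} of $U'$. Second, and fatally, the claimed decomposition of the frontier into the graph of $f$, the graph of $g$, and a ``side'' over $\overline{U'}\setminus U'$ is false. Consider the semi-monotone staircase $U=\bigl((0,1]\times(0,1)\bigr)\cup\bigl((1,2)\times(0,2)\bigr)\subset\Real^2$, i.e.\ the band over $U'=(0,2)$ with $f\equiv 0$ and with $g=1$ on $(0,1]$, $g=2$ on $(1,2)$ (this $g$ is lower semi-continuous and supermonotone). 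Its frontier contains the vertical segment $\{1\}\times[1,2]$, which lies over the interior point $1\in U'$ and is neither on the graphs (even after taking their closures) nor over $\overline{U'}\setminus U'$; moreover the graph of $g$ here is disconnected, hence certainly not a disk homeomorphic to $\overline{U'}$. So the frontier is not two $(n-1)$-disks glued along an $(n-2)$-sphere, and no appeal to Corollary~\ref{co:acyclic} or to the Appendix can certify a sphere from a decomposition that does not exist: these vertical walls over interior discontinuities of $f$ and $g$ are precisely what makes the theorem hard.

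The paper's proof does not use Theorem~\ref{th:band} as its engine at all; it is a local-to-global argument designed to avoid ever analyzing the graphs of $f$ and $g$. One first shows that the intersection of $U$ with a small generic coordinate box around any point of $\overline U$ is a regular cell (Lemmas~\ref{germ}--\ref{box}); this uses definable trivialization of families (Corollary~\ref{cor:fibres}), the curve selection lemma, shellability of the cell complex cut out by $U$ on the walls of the box, the acyclic-link result (Proposition~\ref{pr:acyclic}), and the PL unknottedness theorem (Proposition~\ref{pr:balls}), the last forcing the case distinction $n\le 5$ versus $n>5$. Then $\overline U$ is covered by finitely many such boxes, the covering is refined to a grid, and $U$ is reassembled by repeatedly gluing regular cells along common regular-cell faces using Lemma~\ref{cut}, Corollary~\ref{union}, and Lemma~\ref{complement}. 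Your reduction to ``$\overline U$ is a ball and $\overline U\setminus U$ is a sphere'' via Proposition~\ref{pr:le1.10} agrees with the paper, but everything after that differs; if you wish to keep a band-based induction you would need a genuinely new idea for handling the vertical walls of $\overline U\setminus U$ over the discontinuity locus of $f$ and $g$, and nothing in your outline, nor in the Appendix, supplies it.
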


We are going to prove Theorem \ref{regularcell}
by induction on the dimension $n$ of a regular cell.
For $n=1$ the statement is obvious.
Assume it to be true for $n-1$.

\begin{lemma}\label{cut}
Let $U \subset \Real^n$ be a semi-monotone set.
Let
$$U_0:=U\cap X_{j,=,c},\quad U_+ :=U\cap X_{j,>,c},\quad \text{and}\quad U_- :=U\cap X_{j,<,c}$$
for some $1 \le j \le n$ and $c \in \Real$.
Then $\overline U_+\cap \overline U_-=\overline U_0$.
\end{lemma}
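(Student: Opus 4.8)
The plan is to prove the two inclusions $\overline{U_0}\subseteq \overline{U_+}\cap\overline{U_-}$ and $\overline{U_+}\cap\overline{U_-}\subseteq\overline{U_0}$ separately. The first is elementary and uses only that $U$ is open: every point $q\in U_0=U\cap X_{j,=,c}$ lies in the open set $U$, so a small ball about $q$ is contained in $U$ and contains points with $x_j>c$ and points with $x_j<c$, i.e.\ points of $U_+$ and of $U_-$; hence $q\in\overline{U_+}\cap\overline{U_-}$, and taking closures gives the inclusion. For the reverse inclusion, first note that $\overline{U_+}\subseteq\{x_j\ge c\}$ and $\overline{U_-}\subseteq\{x_j\le c\}$, so any $p\in\overline{U_+}\cap\overline{U_-}$ satisfies $p_j=c$; the whole difficulty is to show that such a $p$ is a limit of points of $U_0$.

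To attack this, since Definition~\ref{new} is symmetric under permutations of the coordinates, I would relabel so that $j=n$ and write $p=(\mathbf{p},c)$ with $\mathbf p\in\Real^{n-1}$. Applying Theorem~\ref{th:band} I represent $U=\{(\x,t)\mid \x\in U',\ f(\x)<t<g(\x)\}$ over the semi-monotone projection $U'\subset\Real^{n-1}$. I then set $S_-:=\{\x\in U'\mid f(\x)<c\}$, $S_+:=\{\x\in U'\mid g(\x)>c\}$ and $S:=\{\x\in U'\mid f(\x)<c<g(\x)\}$; these are exactly the projections along $x_n$ of $U_-$, $U_+$ and $U_0$, so by Corollary~\ref{slice} and Lemma~\ref{le:projection} each of them is a semi-monotone, hence connected, set. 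Two facts drive the argument: because each fibre is a nonempty open interval, $f(\x)<g(\x)$ forces $f(\x)<c$ or $g(\x)>c$, so $S_-\cup S_+=U'$; and clearly $S_-\cap S_+=S$. Continuity of the projection gives $\mathbf p\in\overline{S_-}\cap\overline{S_+}$ from $p\in\overline{U_-}\cap\overline{U_+}$, while $\overline{U_0}=\overline S\times\{c\}$ reduces the goal to proving $\mathbf p\in\overline S=\overline{S_-\cap S_+}$.

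The crux, and the step I expect to be the main obstacle, is this last claim. I would argue by contradiction: if $\mathbf p\notin\overline{S_-\cap S_+}$, choose a small open coordinate box $Q=\prod_{k<n}(p_k-\delta,p_k+\delta)$ with $Q\cap S_-\cap S_+=\emptyset$. The point is that $U'\cap Q$, obtained from the semi-monotone set $U'$ by intersecting with the half-spaces defining $Q$, is again semi-monotone by Corollary~\ref{slice} and therefore \emph{connected}. On the other hand $U'\cap Q=(S_-\cap Q)\cup(S_+\cap Q)$ is then a disjoint union of two open subsets, each nonempty because $\mathbf p\in\overline{S_-}\cap\overline{S_+}$ forces every neighbourhood of $\mathbf p$, in particular $Q$, to meet both $S_-$ and $S_+$; this contradicts connectedness. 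Hence $\mathbf p\in\overline S$, so $p\in\overline{U_0}$, which completes the reverse inclusion and the lemma. The subtlety to get right is that one must use a coordinate box rather than a metric ball, since it is precisely the box that makes $U'\cap Q$ semi-monotone; this is why Corollary~\ref{slice} is the correct tool and why the proof rests on connectivity of coordinate-cone slices instead of any metric convexity.
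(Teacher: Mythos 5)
Your argument is correct, and its engine is the same as the paper's: a point of $(\overline U_+\cap \overline U_-)\setminus\overline U_0$ yields a small coordinate box whose intersection with a semi-monotone set is again semi-monotone (Corollary~\ref{slice}), hence connected, and yet is visibly separated into two nonempty open pieces --- a contradiction. The difference is where you run this argument. The paper runs it directly in $\Real^n$: if a point $\x\in X_{j,=,c}\setminus\overline U_0$ lay in $\overline U_+\cap\overline U_-$, an open cube $C_\eps$ centered at $\x$ would meet both $U_+$ and $U_-$ but miss $U_0$, so $C_\eps\cap U=(C_\eps\cap U_+)\cup(C_\eps\cap U_-)$ would be a separation of the connected set $C_\eps\cap U$; that is the whole proof (the easy inclusion $\overline U_0\subseteq\overline U_+\cap\overline U_-$, which you write out via openness of $U$, is left implicit there). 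You instead first invoke the band representation of Theorem~\ref{th:band}, project along $x_n$, and run the separation argument in $\Real^{n-1}$ with $S_\pm$ in place of $U_\pm$. This works and is not circular (Theorem~\ref{th:band} precedes Lemma~\ref{cut} and does not use it), but the detour costs you extra bookkeeping: the relabelling $j=n$, the semi-continuity of $f$ and $g$ to see that $S_\pm$ are open (though this also follows at once from $S_\pm$ being projections along $x_n$ of the open sets $U_\pm$), and the identification $\overline U_0=\overline S\times\{c\}$. None of this is needed once one notices that missing $U_0$ already separates the cube in $\Real^n$ itself: $C_\eps\cap U_+$ and $C_\eps\cap U_-$ are disjoint open sets covering $C_\eps\cap U$ as soon as $C_\eps\cap U_0=\emptyset$. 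So: same key lemma (Corollary~\ref{slice}) and same contradiction, but the paper's version stays one dimension up and is several hypotheses lighter; your version, in exchange, makes explicit the fibrewise picture ($S_-\cup S_+=U'$, $S_-\cap S_+=S$) that the paper never needs to articulate.
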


\begin{proof}
Let a point $\x=(x_1, \ldots ,x_n) \in X_{j,=,c} \setminus \overline U_0$
belong to $\overline U_+\cap \overline U_-$.
Then there is an $\eps >0$ such that an open cube centered at $\x$,
$$C_\eps := \bigcap_{1 \le j \le n} \{(y_1, \ldots ,y_n)|\> |x_j - y_j| < \eps \}
\subset \Real^n,$$
has non-empty intersections with both $U_+$ and $U_-$
and the empty intersection with $U_0$.
Thus, $C_\eps \cap U$ is not connected, which is not possible since, according to
Corollary~\ref{slice} (1), $C_\eps \cap U$ is semi-monotone.
\end{proof}

\begin{corollary}\label{union}
Let $U \subset \Real^n$ be a semi-monotone set.
If $U_+$ and $U_-$ in Lemma \ref{cut} are regular cells, then $U$ is a regular cell.
\end{corollary}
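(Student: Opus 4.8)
The plan is to establish the two conditions in the characterization recorded just before Theorem~\ref{regularcell} (coming from Proposition~\ref{pr:le1.10}): since $U$ is open, it suffices to show that $\overline U$ is a closed $n$-ball and that the frontier $\overline U\setminus U$ is an $(n-1)$-sphere. First I would dispose of the degenerate cases: if one of $U_+,U_-$ is empty then $U$ coincides with the other and there is nothing to prove. Otherwise both are nonempty, and since $U$ is connected (the case $k=0$ in Definition~\ref{new}) while $\overline{U_+}\cap\overline{U_-}=\overline{U_0}$ by Lemma~\ref{cut}, the set $U_0$ must be nonempty. As $U$ is open, $U_0=U\cap X_{j,=,c}$ is open in the hyperplane $X_{j,=,c}\cong\Real^{n-1}$, hence a nonempty $(n-1)$-dimensional semi-monotone set there by Corollary~\ref{slice}. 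By the inductive hypothesis of Theorem~\ref{regularcell}, $\overline{U_0}$ is a regular $(n-1)$-cell, in particular a closed $(n-1)$-ball; and because $U_\pm$ are regular cells, $\overline{U_\pm}$ are closed $n$-balls with frontier $(n-1)$-spheres $S_\pm:=\overline{U_\pm}\setminus U_\pm$.

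Next I would locate $\overline{U_0}$ inside these spheres. Since $U$ is open and points of $U_0$ satisfy $x_j=c$, each such point is a limit of points of $U_+$ with $x_j>c$, so $U_0\subset S_+$ and hence $\overline{U_0}\subset S_+$; symmetrically $\overline{U_0}\subset S_-$. Using $\overline{U_+}\cap\overline{U_-}=\overline{U_0}$ one checks that $S_+\cap S_-=\overline{U_0}$, and a direct computation of $\overline U\setminus U$ from the disjoint decomposition $U=U_+\cup U_0\cup U_-$ gives
$$\overline U\setminus U=(S_+\setminus U_0)\cup(S_-\setminus U_0),\qquad (S_+\setminus U_0)\cap(S_-\setminus U_0)=\overline{U_0}\setminus U_0=\partial\overline{U_0}.$$

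The heart of the matter is then a gluing step, which I would isolate as a PL-topological lemma for the Appendix. Suppose $\overline{U_0}$ is a \emph{face} of each ball $\overline{U_\pm}$, equivalently that the complementary pieces $S_\pm\setminus U_0$ are again closed $(n-1)$-balls with boundary $\partial\overline{U_0}$. Then (i) gluing the two closed $n$-balls $\overline{U_+}$ and $\overline{U_-}$ along the common boundary ball $\overline{U_0}$ yields a closed $n$-ball, so $\overline U$ is a closed $n$-ball; and (ii) the two complementary balls $S_+\setminus U_0$ and $S_-\setminus U_0$, glued along their common boundary sphere $\partial\overline{U_0}$, form an $(n-1)$-sphere, so $\overline U\setminus U$ is an $(n-1)$-sphere. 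Combining (i) and (ii) with Proposition~\ref{pr:le1.10} shows that $U$ is a regular $n$-cell.

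The main obstacle is exactly the face/local-flatness hypothesis required for the gluing: a closed $(n-1)$-ball embedded in an $(n-1)$-sphere need not have a ball complement in the purely topological category, so one must verify that $\overline{U_0}$ sits in $S_\pm$ as a genuine (e.g. locally flat, or PL) face. I expect to obtain this by passing to a definable triangulation of $\overline U$ compatible with $\overline{U_+}$, $\overline{U_-}$ and $\overline{U_0}$, so that all the pieces become polyhedra and the configuration is PL, and then invoking the PL facts collected in the Appendix together with the band description of Theorem~\ref{th:band}, which controls how $U_0$ lies in the boundary of $U_\pm$. Once the face condition is secured, the two gluing assertions (i) and (ii) are standard consequences of PL ball and sphere gluing.
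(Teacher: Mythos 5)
Your proposal follows essentially the same route as the paper's proof: dispose of the case $U_0=\emptyset$, use Corollary~\ref{slice} and the inductive hypothesis of Theorem~\ref{regularcell} to make $U_0$ a regular $(n-1)$-cell, glue the two closed $n$-balls $\overline U_+$ and $\overline U_-$ along the common boundary ball $\overline U_0$ to conclude that $\overline U$ is a closed $n$-ball, and then glue the complementary boundary pieces along the $(n-2)$-sphere $\overline U_0\setminus U_0$ to recognize the frontier $\overline U\setminus U$ as an $(n-1)$-sphere via Proposition~\ref{pr:le1.10}. The one place you diverge is in declaring the ``face condition'' --- that your $S_\pm\setminus U_0=\partial\overline U_\pm\setminus U_0$ are closed $(n-1)$-balls with boundary $\overline U_0\setminus U_0$ --- to be the main obstacle, requiring a separate local-flatness verification. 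This is not an obstacle at all: after passing to a definable triangulation compatible with $\overline U_+$, $\overline U_-$, $\overline U_0$ (which you correctly propose), every pair in sight is PL, and Proposition~\ref{pr:cor3.13} (Newman's theorem, quoted in the Appendix with no flatness hypothesis) yields precisely that $\partial\overline U_\pm\setminus U_0$ is a closed $(n-1)$-ball; its only hypotheses are that $\overline U_0$ is a PL $(n-1)$-ball contained in the $(n-1)$-sphere $\partial\overline U_\pm$, which you have already established. The wild embeddings you worry about (horned spheres and the like) are a phenomenon of the topological category; in the PL category, codimension-zero complements of balls in spheres are always balls, and similarly Proposition~\ref{pr:cor3.16} needs only the incidence condition $\overline U_+\cap\overline U_-=\overline U_0$ supplied by Lemma~\ref{cut}, not any flatness. (Local flatness is a genuine issue elsewhere in the paper --- namely for the codimension-one splitting in Lemma~\ref{both}, which rests on Proposition~\ref{pr:balls} --- but it plays no role in Corollary~\ref{union}.) So your argument is correct and matches the paper's; the planned detour through Theorem~\ref{th:band} to ``secure'' flatness can simply be deleted.
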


\begin{proof}
We need to prove that $\overline U$ is a closed $n$-ball, and that the frontier
$\overline U \setminus U$ is an $(n-1)$-sphere.
The only non-trivial case is when $U_0$ is non-empty.

Since $U_0$ is semi-monotone due to Corollary~\ref{slice}, $U_0$ is a regular $(n-1)$-cell
by the inductive hypothesis.
It follows that $\overline U_0$, $\overline U_+$, and $\overline U_-$ are closed balls,
while $\overline U_0 \setminus U$ is $(n-2)$-sphere.
Hence $\overline U$ is obtained by gluing together two closed $n$-balls, $\overline U_+$
and $\overline U_-$ along closed $(n-1)$-ball $\overline U_0$ (see Definition~\ref{def:gluing}).
Proposition~\ref{pr:cor3.16} implies that $\overline U$ is a closed $n$-ball.

According to Proposition~\ref{pr:cor3.13}, the sets $\overline U_+ \setminus U
= \partial \overline U_+ \setminus U_0$ and
$\overline U_- \setminus U = \partial \overline U_- \setminus U_0$ are closed $(n-1)$-balls.
The frontier $\overline U \setminus U$ of $U$ is obtained by gluing
$\overline U_+ \setminus U$ and $\overline U_- \setminus U$ along the set
$(\overline U_+\cap \overline U_-) \setminus U$ which, by Lemma~\ref{cut},
is equal to $\overline U_0 \setminus U$ and thus,
is an $(n-2)$-sphere, the common boundary of $\overline U_+ \setminus U$
and $\overline U_- \setminus U$.
It follows from Proposition~\ref{pr:le1.10} that $\overline U \setminus U$ is an $(n-1)$-sphere.
\end{proof}

\begin{lemma}\label{complement}
If $U$ and $U_-$ in Lemma \ref{cut} are regular cells, then $U_+$ is also a regular cell.
\end{lemma}

\begin{proof}
Proposition~\ref{pr:shiota} implies that $\overline U_+$ is
a closed $n$-ball.
By the inductive hypothesis, $U_0$ is a regular cell.
By Proposition~\ref{pr:cor3.13}, $\overline U_+ \setminus U= \partial \overline U_+ \setminus U_0$
is a closed $(n-1)$-ball.
Then the frontier $\overline U_+ \setminus U_+$ of $U_+$ is obtained by gluing two closed
$(n-1)$-balls, $\overline U_+ \setminus U$ and $\overline U_0$ along the $(n-2)$-sphere
$\overline U_0 \setminus U$.
Therefore, by Proposition~\ref{pr:le1.10}, the frontier of $U_+$ is an $(n-1)$-sphere.
\end{proof}

\begin{lemma}\label{both}
Let $n>5$ and $U\subset\Real^n$ be a semi-monotone set and regular cell.
Then, for a generic $c$, both $U_+$ and $U_-$ in Lemma \ref{cut} are regular cells.
\end{lemma}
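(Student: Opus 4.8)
The plan is to reduce the statement to producing a single regular half. If we can show that for a generic $c$ the set $U_-$ is a regular $n$-cell, then, $U$ being a regular cell by hypothesis, Lemma~\ref{complement} gives at once that $U_+$ is a regular cell; by the symmetry between $U_+$ and $U_-$ it therefore suffices to treat $U_-$. Note that $U_-$ is semi-monotone by Corollary~\ref{slice} and acyclic by Corollary~\ref{co:acyclic}, while the cut $U_0=U\cap X_{j,=,c}$, viewed inside $X_{j,=,c}\cong\Real^{n-1}$, is semi-monotone and hence a regular $(n-1)$-cell by the inductive hypothesis; thus $\overline{U_0}$ is a closed $(n-1)$-ball and $\overline{U_0}\setminus U_0$ is an $(n-2)$-sphere.

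First I would exploit the genericity of $c$ to install a piecewise linear structure. Triangulating $\overline U$ compatibly with the coordinate function $x_j$ (see \cite{VDD}) and with the subsets $U$ and $U_0$, one obtains finitely many exceptional values; for every $c$ outside this finite set, $\overline{U_-}$ is a compact PL $n$-manifold with boundary, $U_-$ is its interior, and the cut face $\overline{U_0}$ is a properly embedded $(n-1)$-ball that meets the remaining, lateral part of the boundary exactly along the $(n-2)$-sphere $\overline{U_0}\setminus U_0$, the identification of this intersection being supplied by Lemma~\ref{cut}. That $\overline{U_-}$ is contractible follows from the semi-monotone structure: by Corollary~\ref{co:acyclic} it is acyclic, and the band description of Theorem~\ref{th:band} over a base which is a regular $(n-1)$-cell (hence simply connected) by the inductive hypothesis, together with the connectedness of all coordinate sections of $U_-$, yields that $U_-$ is simply connected; since $U_-$ is the interior of the manifold $\overline{U_-}$, the latter is contractible.

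Next I would identify the frontier $\overline{U_-}\setminus U_-$ as an $(n-1)$-sphere. It is the union of the cut face $\overline{U_0}$, a closed $(n-1)$-ball, and the lateral boundary, glued along their common $(n-2)$-sphere $\overline{U_0}\setminus U_0$. The lateral boundary is the part of the boundary sphere $\overline U\setminus U$ lying in $X_{j,<,c}$; for generic $c$ one shows that its complementary part is an open $(n-1)$-ball, whence by Proposition~\ref{pr:cor3.13} the lateral boundary is a closed $(n-1)$-ball, and Proposition~\ref{pr:le1.10}, gluing two $(n-1)$-balls along their common boundary sphere, shows that $\overline{U_-}\setminus U_-$ is an $(n-1)$-sphere. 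Thus $\overline{U_-}$ is a compact contractible PL $n$-manifold with $(n-1)$-sphere boundary. Since $n>5$, the piecewise linear recognition of the ball (a compact contractible PL $n$-manifold whose boundary is a PL $(n-1)$-sphere is a closed $n$-ball, a consequence of the $h$-cobordism theorem for $n\ge 6$) applies and shows that $\overline{U_-}$ is a closed $n$-ball. Together with the frontier computation this proves that $U_-$ is a regular $n$-cell, and Lemma~\ref{complement} then yields that $U_+$ is a regular cell, completing the proof.

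The crux is the last paragraph. Showing that the lateral boundary, and hence the whole frontier, is a sphere is at bottom a Schoenflies-type assertion about a generic hyperplane section of the sphere $\overline U\setminus U$, and the final passage from ``contractible manifold with sphere boundary'' to ``ball'' is precisely where the hypothesis $n>5$ is indispensable: in the low-dimensional range this recognition is false or unresolved, being governed by the Poincar\'e conjecture. The genericity of $c$ is what makes the argument run at all, guaranteeing the PL manifold structure of $\overline{U_-}$ and the clean transverse splitting of its boundary into two balls.
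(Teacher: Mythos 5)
Your reduction to a single half via Lemma~\ref{complement} is fine, and your endgame theorem (a compact contractible PL $n$-manifold whose boundary is a PL $(n-1)$-sphere is a closed $n$-ball when $n\ge 6$, via the PL $h$-cobordism theorem) is correct, although it lies outside the paper's stated toolkit. The genuine gap is the step you yourself call the crux: you assert that for generic $c$ the part of the boundary sphere $\overline U\setminus U$ on the side $x_j>c$ is an $(n-1)$-ball, and then deduce from Proposition~\ref{pr:cor3.13} that the lateral boundary of $U_-$ is a closed $(n-1)$-ball. But Proposition~\ref{pr:cor3.13} requires as input a \emph{closed} PL ball sitting inside the sphere, so what you feed into it is exactly as hard as what you want out of it, and nothing in your argument produces either half. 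This is a Schoenflies problem for the sphere pair $\bigl(\overline U\setminus U,\ \overline{U_0}\setminus U_0\bigr)$: one must first establish that this pair is \emph{locally flat} --- which is precisely what the genericity of $c$ buys, via the triangulation theorem for the definable function $x_j$ (\cite{Coste}, Th.~4.5), giving a PL product structure $\overline U_0\times(a,b)$ near the cut --- and then invoke Proposition~\ref{pr:schon} (available since $n-1\neq 4$). The paper packages exactly this double use of Schoenflies together with Proposition~\ref{pr:shiota} into Proposition~\ref{pr:balls}, applied to the locally flat ball pair $(\overline U,\overline{U_0})$; unknottedness of that pair yields in one stroke that $\overline{U_\pm}$ are balls and the frontiers are spheres, which is why the paper's proof is three lines and needs no contractibility or $h$-cobordism input at all. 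Without carrying out the local-flatness-plus-Schoenflies argument, your proof is circular at its central point.

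There is a second, subsidiary gap: your claim that $U_-$ is simply connected. Corollary~\ref{co:acyclic} and the Vietoris--Begle theorem control homology only; they say nothing about $\pi_1$. The band projection of Theorem~\ref{th:band} has open-interval fibres, but it is not proper and admits no continuous section (the functions $f$ and $g$ are merely semi-continuous), so neither Smale--Vietoris-type homotopy theorems nor a retraction onto the base applies off the shelf, and ``connectedness of all coordinate sections'' does not give $\pi_1(U_-)=1$. Since regularity of $U_-$ (the conclusion of the lemma) trivially implies simple connectivity, asserting it without proof is close to assuming what is to be shown; the same local product structure discussed above is also what is needed to justify your unproved claim that $\overline{U_-}$ is a compact PL manifold with boundary for generic $c$. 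In short, your skeleton could be made to work for $n>5$, but the two load-bearing steps are exactly the ones left unproven, and both are what the paper's appeal to Proposition~\ref{pr:balls} is designed to supply.
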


\begin{proof}
The set $U_0$ is a regular cell by the inductive hypothesis.
Due to the theorem on triangulation of definable functions (\cite{Coste}, Th.~4.5) applied to
the projection on $x_j$-coordinate function, there is a triangulation of $\overline U$ and
a neighbourhood $(a, b)$ of $c$ in $\Real$ such that the polyhedra corresponding to
$\overline U \cap ((a,b) \times \Real^{n-1})$ and $\overline U_0 \times (a,b)$
are PL-homeomorphic.
Hence, the $(n-1)$-sphere $\partial U_0$ is locally flatly embedded in the $n$-sphere $\partial U$.
The lemma now follows from Proposition~\ref{pr:balls}.
\end{proof}

Let $\Real_{+}^{n}$ be the open first octant
$\{ (x_1, \ldots , x_n) \in \Real^n|\> x_j>0\> \text{for all}\> 1 \le j \le n \}$.

\begin{lemma}\label{germ}
Let $U$ be a semi-monotone set in $\Real_{+}^{n}$ such that the origin is in $\overline U$.
Let $c(t)=(c_1(t),\dots,c_n(t))$
be a germ of a generic definable curve inside $U$ converging to the origin as $t \to 0$.
Then
$$U_t :=U \cap \{x_1<c_1(t),\dots,x_n<c_n(t)\}$$
is a regular cell for all small positive $t$.
\end{lemma}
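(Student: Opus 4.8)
The plan is to induct on $n$, the case $n=1$ being immediate because $U_t$ is then a subinterval of $U$. Throughout I will use freely that $U_t=U\cap X_{1,<,c_1(t)}\cap\cdots\cap X_{n,<,c_n(t)}$ is semi-monotone, by Corollary~\ref{slice}(2), and that translations together with the coordinate reflections $x_j\mapsto-x_j$ preserve both the class of semi-monotone sets and the class of coordinate cones; this symmetry lets me treat, by the same lemma, germ pieces sitting at any vertex of any octant, not only the origin in $\Real_{+}^{n}$. One is tempted to conclude directly from Theorem~\ref{th:band}: $U_t$ is a band $\{(\x,x_n)\colon\x\in U_t',\,f(\x)<x_n<g(\x)\}$ over its projection $U_t'$, which is semi-monotone by Lemma~\ref{le:projection} and hence a regular $(n-1)$-cell by the inductive hypothesis. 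But a band over a regular cell need not be a regular cell --- this is exactly the phenomenon isolated in Section~\ref{sec:VDD} --- so the band structure alone cannot finish the proof, and I will instead use the cut-and-glue lemmas.

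For the inductive step I would fix a coordinate $x_j$ and a generic value $s$ with $0<s<c_j(t)$, and cut $U_t$ by $X_{j,=,s}$ into the slice $U_t\cap X_{j,=,s}$ and the two open pieces $A:=U_t\cap X_{j,<,s}$ and $B:=U_t\cap X_{j,>,s}$. The slice is semi-monotone and its projection to $\Real^{n-1}$ is semi-monotone by Theorem~\ref{th:inductive}, so by the inductive hypothesis it is a regular $(n-1)$-cell. The inner piece $A=U\cap X_{j,<,s}\cap\bigcap_{i\ne j}X_{i,<,c_i(t)}$ is again a germ piece at the origin, with the cutting value in the $x_j$-slot lowered from $c_j(t)$ to $s$. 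The outer piece $B$ is, after the reflection $x_j\mapsto-x_j$ and a translation carrying the face $x_j=c_j(t)$ to the new origin, itself a germ piece (at the vertex obtained from $c(t)$) intersected with one further half-space. Using Lemma~\ref{cut} to identify $\overline A\cap\overline B$ with the closure of the slice, I would then assemble $U_t$ from $A$ and $B$ by Corollary~\ref{union}, resorting to Lemma~\ref{complement} and Lemma~\ref{both} to promote the individual pieces to regular cells once the descent has established regularity of the smaller germ pieces on which they are built.

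The genericity of the curve $c(t)$, and the limit $t\to 0$, are what make this descent well founded and, crucially, what break the apparent circularity in invoking the cut-and-glue lemmas --- all of which require regularity of some of their arguments as an input. Applying the triangulation theorem for definable functions (\cite{Coste}, Th.~4.5) to the projection onto a cutting coordinate, exactly as in the proof of Lemma~\ref{both}, should yield a triangulation of $\overline{U_t}$ that is a product across the generic slice and that exhibits the family $\{U_t\}_{t}$ as definably trivial for all small $t$; this is the mechanism that forces the frontier of $U$ to meet the box faces $\{x_i=c_i(t)\}$ in general position and the relevant spheres to be locally flatly embedded, so that Proposition~\ref{pr:balls} and the gluing propositions underlying Corollary~\ref{union} and Lemma~\ref{complement} apply without presupposing that $U$ itself is a regular cell. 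I expect the main obstacle to be precisely this point: extracting, from genericity and the germ structure at the \emph{single} accumulation point $\mathbf{0}$, enough general position to guarantee local flatness of every frontier piece that arises, and simultaneously arranging the recursion (the induction on $n$ together with the auxiliary lowering of cutting values) so that it terminates in the lower-dimensional regularity supplied by the inductive hypothesis, rather than merely reproducing homeomorphic copies of $U_t$ as $t$ shrinks. Because germ pieces have locally flat frontiers essentially for free, one may hope to avoid the dimension restriction $n>5$ present in Lemma~\ref{both}; verifying this in low dimensions is where I would expect to spend the most effort.
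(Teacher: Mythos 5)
Your proposal has a genuine gap, and it lies exactly at the point you flag as where you would expect to spend the most effort: the recursion is not well founded. Cutting $U_t$ by a generic hyperplane $X_{j,=,s}$ produces two pieces $A$ and $B$ that are still $n$-dimensional sets of the same type (germ pieces, or box intersections), so neither is covered by the inductive hypothesis on $n$. Every lemma you invoke to ``promote'' these pieces presupposes regularity of an $n$-dimensional set of precisely this type: Corollary~\ref{union} needs $A$ and $B$ regular before it yields $U_t$; Lemma~\ref{complement} needs the ambient set (here $U_t$ itself) regular before it yields $B$; and Lemma~\ref{both} needs both that $U$ is already a regular cell and that $n>5$. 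Lowering the cutting value from $c_j(t)$ to $s$ decreases no complexity measure --- it merely reproduces a homeomorphic copy of the same problem --- so the descent never reaches the $(n-1)$-dimensional regularity supplied by induction. Nothing in your sketch breaks this circle: genericity and the triangulation theorem, as you use them (product structure across a generic slice, as in the proof of Lemma~\ref{both}), give definable triviality of the family $\{U_t\}$ in $t$, but triviality in $t$ is not a reduction in $n$.

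The paper's proof rests on a different idea, absent from your proposal: a \emph{cone} structure rather than a product structure. Applying the triangulation theorem (\cite{Coste}, Th.~4.5) to the germ at the origin shows that, for all small $t>0$, $\overline U_t$ is definably homeomorphic to the closed cone with vertex at the origin over $\overline D_t$, where $D_t$ is the cell complex formed by the sets $C_{j_1,\dots,j_i,t}=U\cap\{x_{j_1}=c_{j_1}(t),\dots,x_{j_i}=c_{j_i}(t),\ x_k<c_k(t)\ \text{for all}\ k\ne j_1,\dots,j_i\}$, i.e.\ by the pieces of $U$ on the faces of the box. Each such cell has dimension at most $n-1$ and is semi-monotone by Corollary~\ref{slice}, hence is a regular cell by the inductive hypothesis of the induction on $n$ in Theorem~\ref{regularcell} --- this is where the dimension actually drops, with no circularity. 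All these cells are non-empty because $c(t)\in U$, and genericity of the curve gives $\overline C_{i,j,t}=\overline C_{i,t}\cap\overline C_{j,t}$, which permits an inner induction on $k$ showing that the subcomplex formed by the cells with indices in $\{1,\dots,k\}$ is shellable; by Proposition~\ref{pr:shellable}, $D_t$ is then a regular $(n-1)$-cell, and the cone over it is a regular $n$-cell. Note that this route never uses Proposition~\ref{pr:balls} or Lemma~\ref{both}, which is why Lemma~\ref{germ} holds for all $n$ with no dimension restriction: the hope you express of avoiding $n>5$ is realized in the paper, but by abandoning the cut-and-glue strategy entirely, not by refining it.
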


\begin{proof}
Due to the inductive hypothesis of the induction on the dimension $n$, for every $1 \le i \le n$
each $(n-i)$-dimensional semi-monotone set
$$C_{j_1,\dots,j_i,t}:=U \cap\{ x_{j_1}=c_{j_1}(t),\ldots, x_{j_i}=c_{j_i}(t),\;
x_k<c_k(t)\;\text{for all}\; k \neq j_1,\dots,j_i\},$$
where $1 \le j_1 < \cdots <j_i \le n$, is a regular $(n-i)$-cell.
Since $c(t)\in U$, all sets $C_{j_1,\dots,j_i,t}$ are non-empty.

Due to the theorem on triangulation of definable functions (\cite{Coste}, Th.~4.5),
for all small positive $t$, $\overline U_t$ is definably homeomorphic to a closed cone
with the vertex at the origin and the base definably homeomorphic to $\overline D_t$, where $D_t$
is the $(n-1)$-dimensional regular cell complex formed by cells
$C_{j_1,\dots,j_i,t}$ for all $1 \le j_1 < \cdots <j_i \le n$.
Hence it is enough to prove that $D_t$ is shellable (see Definition~\ref{def:shellable}),
and therefore is a regular cell due to Proposition~\ref{pr:shellable}.
We prove by induction on $k=1,\dots,n$ a more general claim that the regular cell complex
$D_{k,t}$ formed by cells $C_{j_1,\dots,j_i,t}$, $1 \le j_1 < \cdots <j_i \le k$ is shellable.

The base case $k=1$ is true because $C_{1,t}$ is a regular $(n-1)$-cell.
By the inductive hypothesis on $n$, the set
$$C_{i,j,t}=U \cap\{x_i=c_i(t),\;x_j=c_j(t),\; x_k<c_k(t)\; \text{for all}\; k\ne i,j\}$$
is a regular $(n-2)$-cell.
Since the germ $c(t)$ is generic,
$\overline C_{i,j,t}= \overline C_{i,t} \cap \overline C_{j,t}$.
Hence $D_{2,t}$ is obtained by gluing together
two regular $(n-1)$-cells, $C_{1,t}$ and $C_{2,t}$,
along a regular $(n-2)$-cell $C_{1,2,t}$
which is their common boundary (see Definition~\ref{def:gluing}).
It follows that the cell complex $D_{2,t}$ is shellable.

By the inductive hypothesis the complex $D_{k,t}$ is shellable.
The set $C_{k+1,t}$ is a regular $(n-1)$-cell whose common boundary with $D_{k,t}$ is
the $(n-2)$-dimensional shellable complex formed by $k$ regular $(n-2)$-cells
$C_{1,k+1,t}, \ldots ,C_{k,k+1,t}$.
By Proposition~\ref{pr:shellable}, this common boundary is a regular $(n-2)$-cell.
Hence, by Proposition~\ref{pr:shellable} again, the complex $D_{k+1,t}$ is shellable.
\end{proof}

\begin{lemma}\label{4D}
Let $U \subset \Real_{+}^{n}$ be a semi-monotone set, with $n \le 5$, such that the origin is
in $\overline U$, and let
$c(t)=(c_1(t),\dots,c_n(t))$ be a germ of a generic definable curve inside $\Real_{+}^{n}$
(not necessarily inside $U$) converging to the origin as $t \to 0$.
Then
$$U_t=U \cap \{x_1<c_1(t),\dots,x_n<c_n(t)\}$$
is a regular cell for all small positive $t$.
\end{lemma}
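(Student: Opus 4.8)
The plan is to run the argument of Lemma~\ref{germ} as far as it will go, and to isolate the one place where the weaker hypothesis $c(t)\notin U$ forces something new. As in Lemma~\ref{germ}, I would first apply the triangulation theorem for definable functions (\cite{Coste}, Th.~4.5) to the radial structure of $\overline U$ at the origin, so that for all small $t>0$ the closure $\overline U_t$ is definably homeomorphic to a closed cone with vertex at the origin over a base $\overline D_t$, where $D_t$ is the regular cell complex assembled from the \emph{nonempty} cut faces
\[
C_{j_1,\dots,j_i,t}=U\cap\{x_{j_1}=c_{j_1}(t),\dots,x_{j_i}=c_{j_i}(t),\ x_k<c_k(t)\ \text{for}\ k\ne j_1,\dots,j_i\}.
\]
By the induction hypothesis on the dimension $n$, each nonempty face $C_{j_1,\dots,j_i,t}$, being a semi-monotone set of dimension at most $n-1$, is a regular cell, so $D_t$ is a regular cell complex. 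Exactly as in Lemma~\ref{germ}, it then suffices to prove that $D_t$ is a regular $(n-1)$-cell: the cone over a regular $(n-1)$-cell is a regular $n$-cell, which yields that $U_t$ is a regular cell.

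The essential difference from Lemma~\ref{germ} is that there $c(t)\in U$ forced every face $C_{j_1,\dots,j_i,t}$ to be nonempty, and the complex was shown to be shellable by building it up facet by facet. Here, because $c(t)$ need only lie in $\Real_+^n$, some faces may be empty, so $D_t$ is in general a proper subcomplex of the full corner complex and the inductive gluing that produced a shelling in Lemma~\ref{germ} is no longer available. The plan is therefore to recognize $\overline D_t$ as a ball intrinsically. I would first check, using genericity of $c(t)$ together with the fact that each slice $U\cap X_{j,=,c_j(t)}$ is semi-monotone by Corollary~\ref{slice} and hence a regular cell by the dimension induction, that $D_t$ is a collapsible compact PL $(n-1)$-manifold with boundary, its boundary being handled by the same analysis one dimension down; one then concludes that $\overline D_t$ is a PL $(n-1)$-ball and that $D_t$ is shellable in the sense of Definition~\ref{def:shellable}, so Proposition~\ref{pr:shellable} applies as before.

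The main obstacle is precisely this recognition step at the \emph{incomplete} corners of the box, where several facets meet but some are absent: verifying there that $D_t$ is locally a manifold and is collapsible requires classifying the admissible local configurations of $\overline U$ against the shrinking box, and it is this classification (and the low-dimensional PL topology it rests on) that is feasible only for $\dim D_t=n-1\le 4$, i.e. for $n\le5$. This is the complement of the range treated in Lemma~\ref{both}, where the ball-recognition is carried out instead by the local-flatness/unknotting argument of Proposition~\ref{pr:balls}, which is valid only for large $n$; the two lemmas are designed to cover exactly the two ranges in which the corresponding piece of PL topology is available. A secondary point to dispatch is connectedness of $D_t$ (so that the cone is genuinely a ball and not a wedge), which I expect to follow from the connectedness built into semi-monotonicity, as one already sees in low-dimensional examples. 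An alternative route I would keep in reserve is to deform $c(t)$ within $\Real_+^n$ to a generic curve lying inside $U$, to which Lemma~\ref{germ} applies directly, and to control the finitely many combinatorial transitions of $U\cap\{x_1<c_1^{s}(t),\dots,x_n<c_n^{s}(t)\}$ by Corollary~\ref{union} and Lemma~\ref{complement}; the transitions at which the moving corner crosses the frontier of $U$ are again governed by low-dimensional PL topology, so this route, too, would be confined to $n\le5$.
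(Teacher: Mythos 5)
Your setup coincides with the paper's: reduce, via the triangulation theorem of \cite{Coste}, to showing that the complex $D_t$ formed by the \emph{non-empty} faces $C_{j_1,\dots,j_i,t}$ is a regular $(n-1)$-cell, and you correctly locate the new difficulty in the faces that may be empty. But the core of your argument --- recognizing $\overline D_t$ as a ball by checking that $D_t$ is a ``collapsible compact PL $(n-1)$-manifold with boundary'' --- is a genuine gap. What semi-monotonicity actually provides is \emph{acyclicity} of $U_t$ (Corollary~\ref{co:acyclic}), hence acyclicity of $D_t$; collapsibility is strictly stronger than acyclicity (acyclic complexes need not even be simply connected, and contractible ones need not collapse), and nothing in your sketch produces it. Nor is manifoldness of $D_t$ established anywhere: your ``classification of admissible local configurations at the incomplete corners'' is precisely the missing work, named but not performed. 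Your ``secondary point,'' connectedness of $D_t$, also undersells the global input needed: connectedness is far from sufficient, and the whole proof turns on acyclicity in all degrees.

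The paper's actual mechanism, absent from your proposal, is combinatorial rather than manifold-theoretic. Since $U$ is open, non-emptiness of $C_{j_1,\dots,j_k,t}$ forces non-emptiness of $C_{i_1,\dots,i_l,t}$ for every subset $\{i_1,\dots,i_l\}\subset\{j_1,\dots,j_k\}$, so $D_t$ is encoded by a simplicial subcomplex $X$ of the $(n-1)$-simplex, acyclic because $D_t$ is. Proposition~\ref{pr:acyclic} --- an acyclic subcomplex of a simplex of dimension at most $4$ has a vertex with acyclic link --- then drives an induction on the number of simplices of $X$: such a vertex $v$ corresponds to a cell $C_{j,t}$, its link to a subcomplex that is shellable (hence a regular cell) by induction, the complement $Y$ of the star of $v$ is acyclic by Mayer--Vietoris, and gluing $C_{j,t}$ back onto the shellable complex corresponding to $Y$ along a regular cell shows $D_t$ is shellable. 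This combinatorial lemma, not a recognition theorem from PL manifold topology, is the true source of the restriction $n\le 5$: the $(n-1)$-simplex must have dimension at most $4$, and the paper's example following Proposition~\ref{pr:acyclic} shows the statement fails for the $5$-simplex, i.e., already for $n=6$. (By contrast, Lemma~\ref{both} for $n>5$ uses the unknotting result of Proposition~\ref{pr:balls}, so your picture of the dichotomy is right in spirit, though the two restrictions have different origins.) Your reserve route --- deforming $c(t)$ into $U$ and controlling the transitions with Corollary~\ref{union} and Lemma~\ref{complement} --- is likewise not viable as stated: those results concern cutting a fixed semi-monotone set by a coordinate hyperplane, and give no control over how the complex $D_t$ changes when the moving corner crosses the frontier of $U$.
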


\begin{proof}
We can repeat the proof of Lemma \ref{germ} if we prove
that the regular cell complex $D_t$ formed by the {\em non-empty} sets
$$C_{j_1,\dots,j_i,t}:=U \cap\{ x_{j_1}=c_{j_1}(t),\dots,x_{j_i}=c_{j_i}(t),\;x_k<c_k(t)\;
\text{for all}\; k\ne j_1,\dots,j_i\},$$
where $1 \le j_1 < \cdots <j_i \le n$,
is shellable.
The difference from the proof of Lemma~\ref{germ} is that here some of the sets
$C_{j_1,\dots,j_i,t}$ may be empty.

By Corollary~\ref{co:acyclic}, the semi-monotone cell $U_t$ is acyclic.
Hence $D_t$ is acyclic, too, for all small $t>0$.

Since $U$ is open, if $C_{j_1,\dots,j_k,t}$ is non-empty
then $C_{i_1,\dots,i_l,t}$ is non-empty for any subset
$\{i_1,\dots,i_l\}$ of $\{j_1,\dots,j_k\}$.
It follows that the complex $D_t$ can be represented as a simplicial subcomplex
$X$ of an $(n-1)$-simplex $\Delta$ so that every non-empty set $C_{j_1,\dots,j_i,t}$ corresponds
to the $(i-1)$-face of $\Delta$ having vertices $j_1, \ldots ,j_i$.

Observe that $X$ is acyclic since $D_t$ is acyclic.
We prove by induction on the number of simplices in $X$ that the acyclicity of $X$ implies
that $D_t$ is shellable.
The base of the induction, for a single vertex is trivial.
According to Proposition~\ref{pr:acyclic}, $X$ has a vertex $v$ with the
acyclic link $L$.
The vertex $v$ corresponds to a regular $(n-1)$-cell $C_{j,t}$, while the link $L$
corresponds to the $(n-2)$-subcomplex of $D_t$ along which $C_{j,t}$ is glued to $D_t$.
By the inductive hypothesis applied to $L$, that subcomplex of $D_t$ is shellable,
and thus, by Proposition~\ref{pr:shellable}, is a regular cell.
Removing the star of $v$ in $X$, we get the subcomplex $Y$ of $X$ which is acyclic by
the Mayer-Vietoris exact sequence.
By the inductive hypothesis, the subcomplex of $D_t$, corresponding to $Y$ is shellable.
We have proved that $D_t$ is obtained by gluing a regular cell to a shellable complex
along a regular cell, hence $D_t$ is shellable.
\hide{
The case $n=1$ is trivial. In case $n=2$, if $c(t)\notin U$,
only one of the two sets $C_{1,t}$ and $C_{2,t}$ may be non-empty,
otherwise $D_t$ is not connected which contradicts to it being acyclic.
Hence $D_t$ is a regular cell.

Let $n=3$.
If $c(t)\notin U$, at most two of the three sets $C_{i,j,t}$ may be non-empty,
otherwise, $D_t$ would have a non-trivial 1-cycle.
If exactly two of those sets are non-empty, then, by the adjacency property, all sets
$C_{j,t}$ are non-empty, and $D_t$ is shellable.
If only one set, say $C_{1,2,t}$, is non-empty, then, by the adjacency property,
remaining non-empty sets in $D_t$ are $C_{1,t}$ and $C_{2,t}$, hence $D_t$ is shellable.
If all sets $C_{i,j,t}$ are empty, there is a single non-empty set in $D_t$, a regular cell
$C_{j,t}$ for some $1 \le j \le 3$.

Let $n=4$.
If $c(t)\notin U$, at most three sets $C_{i,j,k,t}$ may be non-empty,
otherwise, $D_t$ would have a non-trivial 2-cycle.
If three of those sets are non-empty, then, by the adjacency property,
all sets $C_{j,t}$ and $C_{i,j,t}$ are non-empty, and $D_t$ is shellable.
If two of those sets, say $C_{1,2,3,t}$ and $C_{1,2,4,t}$, are non-empty,
then, all sets $C_{j,t}$ are non-empty and $C_{3,4,t}$ is the only empty set among $C_{i,j,t}$.
In this case, $U_t$ is shellable.
If only one of those sets, say $C_{1,2,3,t}$, is non-empty, then remaining non-empty sets
in $D_t$ are $C_{1,t},\;C_{2,t},\;C_{3,t},\;C_{1,2,t},\;C_{1,3,t},\;C_{2,3,t}$,
and $D_t$ is shellable.

If all sets $C_{i,j,k,t}$ are empty, then there are at most three
non-empty sets $C_{i,j,t}$, otherwise, $D_t$ would have a non-trivial 1-cycle.
If exactly three of the sets $C_{i,j,t}$ are non-empty, then they must be,
up to a permutation of indices, either $C_{1,2,t}$, $C_{1,3,t}$, $C_{1,4,t}$
or  $C_{1,2,t}$, $C_{2,3,t}$, $C_{3,4,t}$.
If, say $C_{1,2,t}$, $C_{1,3,t}$, $C_{2,3,t}$ are non-empty, then $D_t$ has a non-trivial 1-cycle.
In both cases, all sets $C_{j,t}$ are non-empty and $D_t$ is shellable.
If two sets $C_{i,j,t}$ are non-empty, they must be, up to a permutation of indices,
$C_{1,2,t}$ and $C_{1,3,t}$.
If, say $C_{1,2,t}$ and $C_{3,4,t}$ are non-empty, $D_t$ is not connected.
In this case, remaining non-empty sets are $C_{1,t},\;C_{2,t},\;C_{3,t}$,
and $D_t$ is collapsible.
Finally, if all sets $C_{i,j,t}$ are empty, there is a single non-empty set $C_{j,t}=D_t$.
}
\end{proof}

\begin{lemma}\label{general_box}
Let $U \subset \Real_{+}^{n}$ be a semi-monotone set, with $n \le 5$,
such that the origin is in $\overline U$, and let $c=(c_1,\dots,c_n) \in \Real_{+}^{n}$.
Then $U_c :=U \cap \{x_1<c_1,\dots,x_n<c_n\}$
is a regular cell for a generic $c$ with a small $\| c \|$ .
\end{lemma}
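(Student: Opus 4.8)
The plan is to deduce the statement from Lemma~\ref{4D} together with the o-minimal triviality of the family $\{U_c\}$, by a dimension count on the set of ``bad'' parameters. First I would record that $c \mapsto U_c = U \cap \{x_1<c_1,\dots,x_n<c_n\}$ is a definable family of open bounded sets, parametrized by $c \in \Real_+^n$, and that the fibrewise closure $c \mapsto \overline{U_c}$ is a definable family as well, so that $c \mapsto (\overline{U_c}, U_c)$ is a definable family of pairs. By the theorem on definable trivialization (\cite{Coste}), applied over the parameter space $\Real_+^n$, there is a finite definable partition of $\Real_+^n$ over each stratum of which this family of pairs is definably trivial; in particular the definable-homeomorphism type of $(\overline{U_c}, U_c)$ is constant along each stratum. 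Since being a regular cell (Definition~\ref{def:cell}) depends only on the homeomorphism type of this pair, the set
\[
B := \{ c \in \Real_+^n \mid U_c \text{ is not a regular cell} \}
\]
is a union of strata, hence definable.

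Next I would isolate the genericity hypothesis of Lemma~\ref{4D}: the general-position requirements on the cutting coordinates with respect to $U$ (such as $\overline{C}_{i,j,t} = \overline{C}_{i,t}\cap\overline{C}_{j,t}$) fail only when the point $c(t)$ meets a definable ``discriminant'' $\Sigma \subset \Real_+^n$ with $\dim\Sigma < n$. The heart of the argument is then the claim that $\dim(B \cap V) < n$ for some neighbourhood $V$ of the origin. Suppose not. Since $\dim\Sigma < n$, the set $B \setminus \Sigma$ still has dimension $n$ in every neighbourhood of the origin, and the origin lies in its closure. By the curve selection lemma there is a definable curve germ $c(t) \to 0$ as $t \to 0^+$ with $c(t) \in B \setminus \Sigma$ for all small $t>0$; moreover, since the remaining genericity requirements of Lemma~\ref{4D} fail only on a lower-dimensional family of germs, I may arrange $c(t)$ to be generic in the sense of that lemma while staying in the $n$-dimensional set $B \setminus \Sigma$. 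By Lemma~\ref{4D}, $U_{c(t)}$ is then a regular cell for all small $t>0$; but $c(t) \in B$ says exactly that $U_{c(t)}$ is not a regular cell, a contradiction.

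Therefore $\dim(B \cap V) < n$, so the good parameters $\Real_+^n \setminus B$ form a full-dimensional, dense definable subset of every neighbourhood of the origin. Consequently a generic $c$ with $\| c \|$ small lies outside $B$, and for such $c$ the set $U_c$ is a regular cell, which is the assertion of the lemma.

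The step I expect to be the main obstacle is the passage from curves (Lemma~\ref{4D}) to a single generic point: I must guarantee that a curve forced to lie in the $n$-dimensional part of $B$ can nevertheless be chosen generic for Lemma~\ref{4D}. This is precisely what the dimension bound $\dim\Sigma < n$ secures, together with the observation that any further genericity conditions on the germ are thin; the definability of $B$ furnished by trivialization, followed by curve selection, then converts the curve-wise conclusion into the pointwise one. A secondary point requiring care is the verification that definable trivialization can be applied to the \emph{pair} $(\overline{U_c}, U_c)$ simultaneously, which is what makes $B$ definable rather than merely a measurable or otherwise ill-behaved set.
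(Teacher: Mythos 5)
Your proposal is correct and follows essentially the same route as the paper: the paper likewise treats $U_{\y}$ as a definable family over the parameter space $\Real_{+}^{n}$, uses Corollary~\ref{cor:fibres} to partition $\Real_{+}^{n}$ into finitely many definable sets over which the topological type of the fibres is constant, applies the curve selection lemma to each $n$-dimensional piece adjacent to the origin, and invokes Lemma~\ref{4D} along the resulting generic curve to propagate regularity to the whole piece. The only cosmetic differences are that you organize this as a contradiction via the bad set $B$, and that you trivialize the pair $(\overline{U_c},U_c)$, whereas the paper sidesteps the pair issue by tracking the homeomorphism types of $\overline{U_{\y}}$ and of the frontier $\overline{U_{\y}}\setminus U_{\y}$ separately and then applying the ball-plus-sphere criterion following Proposition~\ref{pr:le1.10}.
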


\begin{proof}
Consider a definable set $U_{\y}:= U \cap \{x_1<y_1,\dots,x_n<y_n\} \subset \Real_{+}^{2n}$
with coordinates $x_1, \ldots ,x_n,y_1, \ldots ,y_n$ and $\y=(y_1, \ldots, y_n)$.
By Corollary~\ref{cor:fibres}, there is a partition of $\Real_{+}^{n}$
(having coordinates $y_1, \ldots, y_n$) into definable sets $T$
such that if any $T$ is fixed, then
for all $\y \in T$ the closures $\overline U_{\y}$ are
definably homeomorphic to the same polyhedron, and the frontiers
$\overline U_{\y} \setminus U_{\y}$ are definably homeomorphic to the same polyhedron.

For every $n$-dimensional $T$, such that the origin is in $\overline T$, there is, by the curve
selection lemma (\cite{Coste}, Th.~3.2) a germ of a generic
definable curve $c(t)$ converging to $0$ as $t \to 0$.
Hence, by Lemma~\ref{4D}, for each $c \in T$ the
set $\overline U_c$ is  a closed $n$-ball, while $\overline U_c \setminus U_c$ is an $(n-1)$-sphere.
Therefore, $U_c$ is a regular cell.
\end{proof}

\begin{lemma}\label{cutting1}
Using the notation from Lemma~\ref{general_box}, for $n \le 5$,
and a generic $c \in \Real_{+}^{n}$ with a small $\| c \|$, the intersection
$$U_c \cap \bigcap_{1 \le \nu \le k}  \{ x_{j_\nu} \sigma_\nu a_\nu \},$$
for any $j_\nu \in \{ 1, \ldots , n \}$, $\sigma_\nu \in \{ <,> \}$, and for any generic
$a_1 \ge \cdots \ge a_k$, is either empty or a regular cell.
\end{lemma}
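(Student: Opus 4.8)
The plan is to obtain this by peeling off the half-space cuts one at a time, reducing each step either to the already-established Lemma~\ref{general_box} or to Lemma~\ref{complement}. Throughout I would exploit two standing facts. First, by Corollary~\ref{slice} every set of the form $U_c\cap\bigcap_\nu\{x_{j_\nu}\sigma_\nu a_\nu\}$ is again semi-monotone, so each intermediate object is a semi-monotone set to which Lemma~\ref{cut} and Lemma~\ref{complement} apply. Second, by the outer induction on $n$ that proves Theorem~\ref{regularcell}, every semi-monotone set lying in a coordinate hyperplane $\{x_j=a\}\cong\Real^{n-1}$ is empty or a regular $(n-1)$-cell, which is exactly the ingredient consumed inside the proof of Lemma~\ref{complement}.

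First I would normalize the family of cuts. Since the set is an intersection, its value is independent of the order of the conditions, so for each coordinate I may keep only the strongest lower bound (the maximum of the thresholds with $\sigma_\nu={>}$) and the strongest upper bound (the minimum of those with $\sigma_\nu={<}$). The ordering $a_1\ge\cdots\ge a_k$ is what lets me decide, once and for all and combinatorially, which single-coordinate combinations of bounds are consistent and which force the intersection to be empty; in the empty case there is nothing to prove. I then absorb all the ``$<$''-cuts into the defining box of $U_c$: intersecting $U_c=U\cap\{x_1<c_1,\dots,x_n<c_n\}$ with $\{x_i<a\}$ merely replaces $c_i$ by $\min(c_i,a)$. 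Genericity and smallness of $c$ and of the $a_\nu$ guarantee that the resulting $\tilde c\in\Real_{+}^{n}$ is again generic with small norm, so $U_{\tilde c}$ is a regular cell by Lemma~\ref{general_box}. The problem is thereby reduced to showing that
$$V:=U_{\tilde c}\cap\{x_{i_1}>b_1\}\cap\cdots\cap\{x_{i_m}>b_m\},$$
with $i_1,\dots,i_m$ distinct, is empty or a regular cell.

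I would prove this reduced claim by induction on the number $m$ of remaining ``$>$''-cuts, quantifying over all admissible boxes so that the box may be modified during the induction. The base case $m=0$ is precisely Lemma~\ref{general_box}. For the inductive step set $V':=U_{\tilde c}\cap\{x_{i_1}>b_1\}\cap\cdots\cap\{x_{i_{m-1}}>b_{m-1}\}$, which is empty or a regular cell by the inductive hypothesis; if it is empty, so is $V$. Otherwise $V'$ is semi-monotone and regular, and with $V_0:=V'\cap\{x_{i_m}=b_m\}$, $V_+:=V'\cap\{x_{i_m}>b_m\}=V$, and $V_-:=V'\cap\{x_{i_m}<b_m\}$ we are in the setting of Lemma~\ref{cut}. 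The key point is that $V_-$ carries a ``$<$''-cut on the coordinate $x_{i_m}$, which I absorb into the box as in the previous paragraph, turning $V_-$ into $U_{\tilde c'}\cap\{x_{i_1}>b_1\}\cap\cdots\cap\{x_{i_{m-1}}>b_{m-1}\}$ for a modified admissible box $\tilde c'$; hence $V_-$ is empty or a regular cell by the inductive hypothesis. If $V_-=\emptyset$, then since $V'$ is open and connected it lies entirely in $\{x_{i_m}>b_m\}$, so $V=V'$ is a regular cell. If $V_-$ is a regular cell, then $V'$ and $V_-$ are regular cells and $V'$ is semi-monotone, so Lemma~\ref{complement} yields that $V=V_+$ is a regular cell. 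This completes the induction and the proof.

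The step I expect to be the main obstacle is the careful bookkeeping of genericity rather than any single topological fact. I must arrange that every modified box $\tilde c,\tilde c'$ produced during the induction still satisfies the generic-with-small-norm hypothesis of Lemma~\ref{general_box}, and that each equality slice $V_0$ arising along the way is a genuine transverse regular $(n-1)$-cell rather than a degenerate intersection. I would discharge this by fixing the combinatorial cutting data at the outset, noting that only finitely many slices and boxes ever occur, and then choosing $c$ and $a_1,\dots,a_k$ in the complement of the finitely many bad definable conditions coming from those finitely many configurations; the hypotheses $n\le 5$ and the ordering $a_1\ge\cdots\ge a_k$ enter exactly to keep this finite bad locus under control and to classify the consistent versus empty cases.
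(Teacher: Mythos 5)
Your proposal is correct and follows essentially the same route as the paper: induction on the number of half-space cuts, absorbing each ``$<$''-cut into the defining box so that Lemma~\ref{general_box} (or the inductive hypothesis over all admissible boxes) applies, and invoking Lemma~\ref{complement} to handle each ``$>$''-cut. The only differences are cosmetic --- you absorb all ``$<$''-cuts up front and induct only on the ``$>$''-cuts, and you spell out the degenerate case $V_-=\emptyset$ and the genericity bookkeeping, which the paper leaves implicit.
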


\begin{proof}
It is sufficient to assume that $a_\nu < c_{j_\nu}$ for all $\nu$.
Induction on $k$.
For $k=1$, the set $U_c \cap \{ x_{j_1} < a_1 \}$ is
itself a set of the kind $U_c$, and therefore is a regular cell, by Lemma~\ref{general_box}.
Then the set $U_c \cap \{ x_{j_1} > a_1 \}$ is a regular cell due to Corollary~\ref{complement}.

By the inductive hypothesis, every non-empty set of the kind
\begin{equation}\label{eq:intersection_box}
U_{c}^{(k-1)}:= U_c \cap \bigcap_{1 \le \nu \le k-1}  \{ x_{j_\nu} \sigma_\nu a_\nu \}
\end{equation}
is a regular cell.
Also by the inductive hypothesis, replacing $c_{j_k}$ by $a_k$ if $a_k < c_{j_k}$, every set
$U_{c}^{(k-1)} \cap \{ x_{j_k} < a_k \}$ is a regular cell.
Since both $U_{c}^{(k-1)}$ and $U_{c}^{(k-1)} \cap \{ x_{j_k} < a_k \}$
are regular cells, so is $U_{c}^{(k-1)} \cap \{ x_{j_k} > a_k \}$,
by the Corollary~\ref{complement}, which completes the induction.
\end{proof}

In the similar statement in the case $n>5$ we need to assume that a generic point $c \in U$.

\begin{lemma}\label{cutting2}
Let $U$ be a semi-monotone set in $\Real_{+}^{n}$, with $n>5$,
let generic $c=(c_1,\dots,c_n) \in U$, and let $U_c:=U \cap \{x_1<c_1,\dots,x_n<c_n\}$.
Then
$$U_c \cap \bigcap_{1 \le \nu \le k}  \{ x_{j_\nu} \sigma_\nu a_\nu \},$$
for a small $\| c \|$,
for any $j_\nu \in \{ 1, \ldots , n \}$, $\sigma_\nu \in \{ <,> \}$, and for any generic
$a_1 \ge \cdots \ge a_k$, is either empty or a regular cell.
\end{lemma}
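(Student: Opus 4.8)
The plan is to follow the scheme of the proof of Lemma~\ref{cutting1}, doing induction on $k$, but to replace the two ingredients that were special to $n\le 5$ by their high-dimensional counterparts. The base case (that $U_c$ itself is a regular cell) will now rest on Lemma~\ref{germ} instead of Lemma~\ref{4D}, and each cutting step will be handled directly by Lemma~\ref{both} instead of by Lemma~\ref{general_box} together with Lemma~\ref{complement}. Throughout, each set we cut is semi-monotone by Corollary~\ref{slice}, so the hypotheses of Lemma~\ref{both} are available.

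First I would settle the case $k=0$, namely that $U_c$ is a regular cell for a generic $c\in U$ with small $\|c\|$. Exactly as in the proof of Lemma~\ref{general_box}, I would invoke Corollary~\ref{cor:fibres} to partition the $\y$-space $\Real_{+}^{n}$ into definable sets $T$ along which the homeomorphism types of $\overline U_{\y}$ and of the frontier $\overline U_{\y}\setminus U_{\y}$ are constant. For a generic $c\in U$ with small $\|c\|$, the point $c$ lies in a full-dimensional such $T$ that is contained in $U$ and has the origin in $\overline T$; such $T$ exist because $U$ is open and the origin is in $\overline U$. The curve selection lemma (\cite{Coste}, Th.~3.2) then yields a germ of a generic definable curve $c(t)\in T\subset U$ converging to the origin. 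Since this curve lies \emph{inside} $U$, Lemma~\ref{germ}, which carries no dimension restriction, applies and shows that $U_{c(t)}$ is a regular cell for all small $t>0$; as the homeomorphism type is constant on $T$, the set $U_c$ is a regular cell for every $c\in T$, in particular for the generic $c$ under consideration.

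The inductive step is then immediate. Assume the nonempty set $U_{c}^{(k-1)}=U_c\cap\bigcap_{1\le\nu\le k-1}\{x_{j_\nu}\sigma_\nu a_\nu\}$ is a regular cell; by Corollary~\ref{slice} it is also semi-monotone. Since $n>5$, I would apply Lemma~\ref{both} to $U_{c}^{(k-1)}$ with cut coordinate $x_{j_k}$ at the value $a_k$: for this generic value both $U_{c}^{(k-1)}\cap\{x_{j_k}>a_k\}$ and $U_{c}^{(k-1)}\cap\{x_{j_k}<a_k\}$ are regular cells, so $U_{c}^{(k)}=U_{c}^{(k-1)}\cap\{x_{j_k}\sigma_k a_k\}$ is a regular cell regardless of $\sigma_k$. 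If instead the hyperplane $\{x_{j_k}=a_k\}$ misses $U_{c}^{(k-1)}$, then $U_{c}^{(k)}$ is either empty or equal to $U_{c}^{(k-1)}$, hence again a regular cell. Choosing the $a_\nu$ sequentially, the ordering $a_1\ge\cdots\ge a_k$ together with genericity guarantees that at each stage $\nu$ the value $a_\nu$ avoids the finitely many exceptional cut positions for $U_{c}^{(\nu-1)}$, so that Lemma~\ref{both} is applicable throughout; this also covers repeated indices $j_\nu$, i.e. nested cuts in the same coordinate direction.

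I expect the base case to be the main obstacle; everything after it is a clean re-use of Lemma~\ref{both}, whose force (local flatness of the cut and Proposition~\ref{pr:balls}) is precisely what compensates for the loss of the low-dimensional shellability arguments. The reason the hypothesis $c\in U$ (absent from Lemma~\ref{cutting1}) is indispensable here is exactly that it lets the curve $c(t)$ be chosen inside $U$, so that Lemma~\ref{germ}, valid in all dimensions unlike Lemma~\ref{4D}, can be invoked to start the induction. Without $c\in U$ one would be forced back to the possibly empty faces $C_{j_1,\dots,j_i,t}$ analysed in Lemma~\ref{4D}, whose shellability we can control only for $n\le 5$.
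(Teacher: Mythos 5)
Your proposal is correct and takes essentially the same route as the paper: the base case $k=0$ is handled by Lemma~\ref{germ} combined with the partition-and-curve-selection argument from the proof of Lemma~\ref{general_box} (made possible precisely by the hypothesis $c\in U$), and the induction on $k$ proceeds by applying Lemma~\ref{both} to the semi-monotone regular cell $U_{c}^{(k-1)}$. Your write-up is in fact somewhat more explicit than the paper's (noting semi-monotonicity via Corollary~\ref{slice} and the degenerate case where the cutting hyperplane misses the set), but the underlying argument is identical.
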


\begin{proof}
Lemma~\ref{germ} and the argument from the proof of Lemma~\ref{general_box} implies that $U_c$
is a regular cell.
Induction on $k$.
For $k=1$, both sets, $U_c \cap \{ x_{j_1} > a_1 \}$ and $U_c \cap \{ x_{j_1} < a_1 \}$,
are regular cells, due to Lemma~\ref{both}.
Assume by the inductive hypothesis that every non-empty set of the kind $U_{c}^{(k-1)}$
(see (\ref{eq:intersection_box})) is a regular cell.
Then, due to Lemma~\ref{both}, both sets, $U_{c}^{(k-1)} \cap \{ x_{j_k} > a_k \}$ and
$U_{c}^{(k-1)} \cap \{ x_{j_k} < a_k \}$, are regular cells.
\end{proof}

\begin{lemma}\label{box}
Let $U \subset \Real^n$ be a semi-monotone cell, and let $\y=(y_1, \ldots ,y_n) \in \overline U$.
Then for generic points $a=(a_1, \ldots ,a_n)$,
$b=(b_1, \ldots ,b_n) \in \Real_{+}^{n}$, with small $\| a \|$ and $\| b \|$, the intersection
$$U_{a,b}:= U \cap \bigcap_{1 \le j \le n} \{ -a_j < x_j -y_j <b_j \}$$
is a regular cell.
\end{lemma}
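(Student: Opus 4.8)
The plan is to translate coordinates so that $\y$ is the origin and then realize $U_{a,b}=U\cap\bigcap_{j}\{-a_j<x_j<b_j\}$ as an object assembled out of the one-sided ``corner-box'' situations already handled in Lemmas~\ref{general_box}, \ref{cutting1}, \ref{cutting2} and \ref{both}. Throughout, every set obtained from $U$ by intersecting with coordinate half-spaces or slabs is semi-monotone by Corollary~\ref{slice}, and $U_{a,b}$ is non-empty because $\y\in\overline U$ while the box is a neighbourhood of $\y$. I would split the argument according to whether $n>5$ or $n\le 5$, exactly as the available cutting tools do.

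For $n>5$ I would build $U_{a,b}$ by imposing the $2n$ half-space constraints $\{x_j<b_j\}$ and $\{x_j>-a_j\}$ one at a time, in any order. At each stage the current set $V$ is semi-monotone (Corollary~\ref{slice}) and, by induction on the number of constraints already imposed, a regular cell; the base of that induction is the hypothesis that $U$ itself is a (semi-monotone) regular cell. Cutting $V$ by the next generic coordinate hyperplane $\{x_j=b_j\}$ or $\{x_j=-a_j\}$ and keeping the required side yields a regular cell by Lemma~\ref{both} whenever the cut actually separates $V$, and leaves $V$ unchanged otherwise. Since $U_{a,b}$ is non-empty and contained in every intermediate kept set, no kept set is empty, so Lemma~\ref{both} is applicable at each splitting step, and after $2n$ steps $U_{a,b}$ is a regular cell. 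Choosing all $a_j,b_j$ generically makes every cut generic.

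For $n\le 5$, Lemma~\ref{both} is unavailable, so I would instead slice the box successively at the coordinate hyperplanes $\{x_j=0\}$ through $\y$ and glue with Corollary~\ref{union}. Concretely, I would prove by induction on $m=0,\dots,n$ that, for each choice of a one-sided interval $(0,b_j)$ or $(-a_j,0)$ in every coordinate $j>m$, the intersection of $U$ with these one-sided intervals and with the full slabs $(-a_j,b_j)$ in the coordinates $j\le m$ is empty or a regular cell; the top case $m=n$ is $U_{a,b}$. In the inductive step one cuts such a set $V$ at $\{x_m=0\}$: the two sides are precisely the two sets of the previous level (carrying the interval $(0,b_m)$, respectively $(-a_m,0)$, in coordinate $m$), the central slice $V\cap\{x_m=0\}$ is a semi-monotone subset of $\Real^{n-1}$ (Corollary~\ref{slice} and Lemma~\ref{le:projection}) and hence a regular $(n-1)$-cell by the inductive hypothesis on dimension, and Corollary~\ref{union} then gives that $V$ is a regular cell when both sides are non-empty. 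If one side is empty, openness of $U$ forces the central slice to be empty as well, so $V$ equals the non-empty side. The base case $m=0$ is an octant corner box; after reflecting the coordinates whose interval is $(-a_j,0)$ (which preserves semi-monotonicity, since reflections map coordinate cones to coordinate cones) it becomes the intersection of a semi-monotone subset of $\Real_{+}^{n}$ with $\{x_j<c_j\}$, hence it is empty (when $\y$ is not in the closure of that octant part of $U$) or a regular cell by Lemma~\ref{general_box}, for a generic small $c$.

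The main obstacle is the $n\le 5$ case, and specifically the careful management of the gluing. One must check at every node of the recursion that the hypotheses of Corollary~\ref{union} hold --- semi-monotonicity of each partial box, regularity of the central $(n-1)$-dimensional slice, and the identity $\overline{V_+}\cap\overline{V_-}=\overline{V_0}$ of Lemma~\ref{cut} --- and must correctly dispose of the nodes where one side is empty by appealing to the openness of $U$. There is also a genericity bookkeeping point: a single small generic $(a,b)$ must simultaneously make all $2^n$ octant corner boxes regular in the sense of Lemma~\ref{general_box}, which is possible because the finitely many exceptional parameter sets are each lower-dimensional. By contrast the $n>5$ case is routine once one grants that $U$ is a regular cell, which is the role played by the word ``cell'' in the hypothesis.
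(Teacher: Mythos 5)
Your $n\le 5$ branch is essentially the paper's own argument: per-octant corner boxes supplied by Lemma~\ref{general_box}, then pairwise gluing across the coordinate hyperplanes through $\y$ via Corollary~\ref{union}, with empty sides disposed of by openness of $U$. The only deviation is bookkeeping: the paper chooses a separate generic point $c$ in each octant and then invokes Lemma~\ref{cutting1} to see that the corner boxes of the common, smaller box $\prod_j(-a_j,b_j)$ are still regular, whereas you choose a single generic $(a,b)\in\Real_{+}^{2n}$ that is simultaneously good for all $2^n$ octants, the exceptional parameter sets being lower-dimensional. Both routes are legitimate.

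The genuine gap is your $n>5$ branch. You read the hypothesis ``semi-monotone cell'' as ``$U$ is a regular cell'' and make that the base of your half-space-at-a-time induction, so that Lemma~\ref{both} can be applied. But regularity of $U$ is not available: in the paper ``semi-monotone cell'' means nothing more than ``semi-monotone set'' (the same phrase is used, e.g., in the proof of Theorem~\ref{th:band}), and Lemma~\ref{box} is invoked in the proof of Theorem~\ref{regularcell} precisely for a semi-monotone $U$ whose regularity is the statement being proved, so under your reading the whole scheme becomes circular. Consistently, the paper's proof of Lemma~\ref{box} never uses regularity of $U$: for $n>5$ it runs the \emph{same} octant-by-octant gluing as for $n\le 5$, except that the corner boxes are supplied by Lemma~\ref{cutting2} (which rests on Lemma~\ref{germ}) instead of Lemmas~\ref{general_box} and \ref{cutting1}. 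The real price of $n>5$ is that the generic corner point must be chosen \emph{inside} $U$ intersected with each octant (octants not meeting $U$ arbitrarily near $\y$ contribute empty boxes), not that $U$ must already be known to be a cell. Without that regularity, Lemma~\ref{both} cannot be applied even once --- its proof needs the frontier of $U$ to be a sphere --- so your $n>5$ induction fails at its base; the repair is to run your $n\le 5$ gluing scheme verbatim with Lemma~\ref{cutting2} in the role of Lemmas~\ref{general_box} and \ref{cutting1}.
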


\begin{proof}
Induction on $n$ with the base $n=1$ being obvious.

Translate the point $\y$ to the origin.
Let ${\mathbb P}$ be an octant of $\Real^n$.
By Lemma~\ref{general_box} in the case $n \le 5$, or by Lemma~\ref{cutting2} in the case $n>5$,
for a generic point $c=(c_1, \ldots, c_n) \in {\mathbb P} \cap U$, with a small $\| c \|$, the set
$U_c:= U \cap \{|x_1| <|c_1|, \ldots , |x_n| < |c_n| \}$ is either empty or a regular cell.
Choose such a point $c$ in every octant ${\mathbb P}$.

Choose $(-a_i)$ (respectively, $b_i$) as the maximum (respectively, minimum) among the negative
(respectively, positive) $c_i$ over all octants ${\mathbb P}$.
We now prove that, with so chosen $a$ and $b$, the set $U_{a,b}$ is a regular cell.
Induction on $r=0, \ldots ,n-1$.
For the base of the induction, with $r=0$,
if $d$ is a vertex of
$$\bigcap_{1 \le j \le n} \{ -a_j < x_j <b_j \}$$
belonging to one of the $2^n=2^{n-r}$ octants ${\mathbb P}$, then $U_d$
is either empty or a regular cell
(by Lemma~\ref{cutting1} in the case $n \le 5$, or by Lemma~\ref{cutting2} in the case $n>5$).
Partition the family of all sets of the kind $U_d$ into pairs $(U_{d'},U_{d''})$ so that
$d'_1=a_1$, $d''_1=b_1$ and $d'_i=d''_i$ for all $i =2, \ldots, n$.
Whenever the cells $U_{d'}$, $U_{d''}$ are both non-empty, they have the common $(n-1)$-face
$$U \cap \{ x_1=0, |x_2| < d'_2, \ldots , |x_n|< d'_n \}$$
which, by the inductive hypothesis of the induction on $n$, is a regular cell.
Then, according to Corollary~\ref{union}, the union of the common face and
$U_{d'} \cup U_{d''}$ is a regular cell.
Gluing in this way all pairs $(U_{d'},U_{d''})$, we get a family of $2^{n-1}$
either empty or regular cells.
This family is partitioned into pairs of regular cells each of which has the common regular
cell face in the hyperplane $\{ x_2=0 \}$.
On the last step of the induction, for $r=n-1$, we are left with at most two regular cells
having, in the case of the exactly two cells, the common regular cell face in
the hyperplane $\{ x_n=0 \}$.
Gluing these sets along the common face, we get, by Corollary~\ref{union}, the regular cell
$U_{a,b}$.
\end{proof}

\begin{lemma}\label{box_cutting}
Using the notations from Lemma~\ref{box}, the intersection
\begin{equation}\label{eq:cutting}
V_{a,b}:= U_{a,b} \cap \bigcap_{1 \le \nu \le k}  \{ x_{j_\nu} \sigma_\nu d_\nu \},
\end{equation}
for any $j_\nu \in \{ 1, \ldots , n \}$, $\sigma_\nu \in \{ <,> \}$, and for any generic
$d_1 \ge \cdots \ge d_k$, is either empty or a regular cell.
\end{lemma}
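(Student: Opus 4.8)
The plan is to follow the inductive scheme of Lemma~\ref{cutting1}, adapted from the one-sided box $U_c$ to the two-sided box $U_{a,b}$ of Lemma~\ref{box}. First I would translate the base point $y$ to the origin, so that $U_{a,b}=U\cap\bigcap_{j}\{-a_j<x_j<b_j\}$ with $a_j,b_j>0$, and then discard or trivialize the inactive cuts: if a threshold $d_\nu$ falls outside the interval $(-a_{j_\nu},b_{j_\nu})$, the corresponding condition is either vacuous on $U_{a,b}$ (drop it) or forces $V_{a,b}=\emptyset$ (which the statement allows), so I may assume $-a_{j_\nu}<d_\nu<b_{j_\nu}$ for every $\nu$; genericity also gives $d_\nu\ne 0$. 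I would then argue by induction on the number $k$ of cuts, proving the statement simultaneously for every box $U_{a,b}$ of the form in Lemma~\ref{box} and peeling off the last (smallest) threshold $d_k$. The base case $k=0$ is $V_{a,b}=U_{a,b}$, a regular cell by Lemma~\ref{box}.

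For the inductive step set $V_{a,b}^{(k-1)}:=U_{a,b}\cap\bigcap_{\nu\le k-1}\{x_{j_\nu}\sigma_\nu d_\nu\}$; by the inductive hypothesis this is either empty or a regular cell, and by Corollary~\ref{slice} it is semi-monotone, since it is obtained from $U$ by intersecting with sets of the form $X_{j,<,a}\cap X_{j,>,b}$ and $X_{j,\sigma,c}$. The central observation is that exactly one of the two half-spaces $\{x_{j_k}<d_k\}$ and $\{x_{j_k}>d_k\}$ cuts the box $U_{a,b}$ into a smaller box that still contains the origin on the $j_k$-th coordinate: if $d_k>0$ it is $\{x_{j_k}<d_k\}$, giving the box $U_{a,b^\ast}$ with $b^\ast_{j_k}=d_k$, and if $d_k<0$ it is $\{x_{j_k}>d_k\}$, giving $U_{a^\ast,b}$ with $a^\ast_{j_k}=-d_k$. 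In either case this smaller box again straddles the origin, its modified bound is generic and of small norm, and $U_{a,b^\ast}\cap\bigcap_{\nu\le k-1}\{x_{j_\nu}\sigma_\nu d_\nu\}$ (respectively $U_{a^\ast,b}\cap\cdots$) equals $V_{a,b}^{(k-1)}$ intersected with that center-containing half-space; hence this intersection is empty or a regular cell by the inductive hypothesis applied to the smaller box. If the prescribed sign $\sigma_k$ selects this center-containing side, we are done; otherwise the prescribed side is the complementary half-space, and it is a regular cell by Corollary~\ref{complement} applied to the semi-monotone regular cell $V_{a,b}^{(k-1)}$, using the evident reflection symmetry $x_{j_k}\mapsto -x_{j_k}$ that lets Corollary~\ref{complement} produce either half from the other.

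I expect the main obstacle to be precisely this feature, absent from Lemma~\ref{cutting1}: because $U_{a,b}$ straddles $y$ on both sides, a cut no longer automatically yields a sub-box of the same anchored shape, so one must route through the center-containing side and recover the desired side by the complement argument. The accompanying delicate point is the genericity bookkeeping: one must check that replacing a single box bound by $d_k$ keeps the tuple $(a,b^\ast)$ (or $(a^\ast,b)$) generic and of small norm, so that Lemma~\ref{box} and the inductive hypothesis genuinely apply, and that after peeling off the smallest threshold the remaining $d_1\ge\cdots\ge d_{k-1}$ stay generic and ordered. The ordering hypothesis is exactly what makes this peeling consistent, since $d_k$ being smallest means the absorbed bound is the binding one on the coordinate $j_k$ and renders any earlier cut on that same coordinate either redundant or contradictory, both compatible with the inductive hypothesis.
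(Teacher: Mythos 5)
Your reduction, your induction on $k$, and the absorb-one-side/complement-the-other mechanism reproduce the paper's intended argument — but only in the range $n\le 5$. The paper's (admittedly terse) proof is explicitly split into two cases: ``analogous to Lemma~\ref{cutting1} for $n\le 5$, and to Lemma~\ref{cutting2} for $n>5$,'' and your proposal erases that dichotomy. The gap is in the step where you re-certify the shrunken box: you claim $U_{a,b^\ast}$ (with $b^\ast_{j_k}=d_k$) is again ``of the form in Lemma~\ref{box}'' merely because its modified bound is generic and small. For $n\le 5$ this is legitimate, because in that range Lemma~\ref{box} rests on Lemmas~\ref{4D} and~\ref{general_box}, which certify boxes whose corners are arbitrary generic points of $\Real_{+}^{n}$, \emph{not necessarily in} $U$; so an arbitrary generic threshold $d_k$ can be absorbed as a box bound. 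But for $n>5$ the paper can only certify boxes whose corners are generic points \emph{of} $U$ (Lemma~\ref{germ} needs the curve inside $U$, and Lemma~\ref{cutting2} needs generic $c\in U$); the threshold $d_k$ is an arbitrary generic number with no relation to $U$, so neither Lemma~\ref{box} as actually proved nor your inductive hypothesis covers $U_{a,b^\ast}$, and your induction collapses there. This is not bookkeeping: the reason the paper maintains two parallel tracks throughout Section~\ref{sec:regular} is that the acyclicity argument (Proposition~\ref{pr:acyclic}) behind Lemma~\ref{4D} fails in ambient dimension $n>5$.

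The repair is exactly the paper's second track: for $n>5$, drop the shrunken-box/complement detour and note at each step of the induction on $k$ that $V_{a,b}^{(k-1)}$ is semi-monotone (Corollary~\ref{slice}) and a regular cell (inductive hypothesis), so Lemma~\ref{both} — the paper's tool for cutting a semi-monotone regular cell by a \emph{generic} hyperplane when $n>5$, resting on locally flat sphere pairs (Proposition~\ref{pr:balls}) — yields directly that \emph{both} $V_{a,b}^{(k-1)}\cap\{x_{j_k}<d_k\}$ and $V_{a,b}^{(k-1)}\cap\{x_{j_k}>d_k\}$ are regular cells, with no appeal to Lemma~\ref{complement} at all. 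A minor further point: your explanation of the role of the ordering $d_1\ge\cdots\ge d_k$ (that peeling off $d_k$ makes earlier cuts on the same coordinate ``redundant or contradictory'') is accurate only when the absorbed side is the upper one ($d_k>0$); when $d_k<0$, an earlier threshold $d_\mu\ge d_k$ on the same coordinate remains a genuine cut of the smaller box. This does not harm the induction — the inductive hypothesis handles such cuts — but it is not the reason you give.
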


\begin{proof}
Analogous to the proof of Lemmas~\ref{cutting1} in the case of $n \le 5$, and to the
proof of Lemma~\ref{cutting2} in the case of $n>5$.
\end{proof}

\begin{proof}[Proof of Theorem~\ref{regularcell}]
For each point $\y \in \overline U$ choose generic points $a,\> b \in \Real^n$ as in Lemma~\ref{box},
so that the set $U_{a,b}$ becomes a regular cell.
We get an open covering of the compact set $\overline U$ by the sets of the kind
$$A_{a,b}=: \bigcap_{1 \le j \le n} \{ -a_j < x_j -y_j <b_j \},$$
choose any finite subcovering ${\mathcal C}$.
For every $j=1, \ldots , n$ consider the finite set $D_j$ of $j$-coordinates $a_j,\> b_j$ for all
sets $A_{a,b}$ in ${\mathcal C}$.
Let
$$\bigcup_{1 \le j \le n} D_j= \{ d_1, \ldots ,d_k \}$$
with $d_1 \ge \cdots \ge d_k$.
Every set $V_{a,b}$, corresponding to $\{ d_1, \ldots ,d_k \}$
(see (\ref{eq:cutting})), is regular, by Lemma~\ref{box_cutting}, and
$U$ is the union of those $V_{a,b}$ and their common faces, for which $A_{a,b} \in {\mathcal C}$.

The rest of the proof is similar to the final part of the proof of Lemma~\ref{box}.
Use induction on $r=1, \ldots ,n$, within the current induction step of the induction on $n$.
The base of the induction is for $r=1$.
Let $D_1= \{ d_{1,1}, \ldots , d_{1,k_1} \}$ with $d_{1,1} \ge \cdots \ge d_{1,k_1}$.
Partition the finite family of all regular cells $V_{a,b}$,
for all $A_{a,b} \in {\mathcal C}$, into $(|D_1|-1)$-tuples
so that the projections of cells in a tuple on the $x_1$-coordinate are exactly the intervals
\begin{equation}\label{eq:intervals}
(d_{1,k_1}, d_{1,k_1-1}), (d_{1,k_1-1}, d_{1,k_1-2}), \ldots , (d_{1,2}, d_{1,1}),
\end{equation}
and any two cells in a tuple having as projections two consecutive intervals in (\ref{eq:intervals})
have the common $(n-1)$-dimensional face in a hyperplane $\{ x_1= {\rm const} \}$.
This face, by the external inductive hypothesis (of the induction on $n$), is a regular cell.
According to Corollary~\ref{union}, the union of any two consecutive cells and their common face
is a regular cell.
Gluing in this way all consecutive pairs in every $(|D_1|-1)$-tuple, we get a smaller
family of regular cells.
This family, on the next induction step $r=2$, is partitioned into $(|D_2|-1)$-tuples of cells
such that in each of these tuples
two consecutive cells have the common regular cell face in a hyperplane $\{ x_2= {\rm const} \}$.
On the last step, $r=n$, of the induction we are left with one $(|D_n|-1)$-tuple of regular cells
such that two consecutive cells have the common regular cell face in a hyperplane
$\{ x_n= {\rm const} \}$.
Gluing all pairs of consecutive cells along their common faces, we get, by Corollary~\ref{union},
the regular cell $U$.
\end{proof}

\section{Semi-algebraic semi-monotone sets over real closed\\ fields}\label{sec:real-closed}

In this section we prove the regularity of semi-monotone sets for
semi-algebraic sets defined over an arbitrary real closed field
$\R$ which is fixed for the rest of the section.
Accordingly, in the definition of semi-monotonicity, ``connectivity'' refers to
``semi-algebraic connectivity'', while
an $n$-dimensional semi-algebraic regular cell $S \subset \R^n$ is such that
there exists a {\em semi-algebraic} homeomorphism
\[
h:\> (\overline{S},S) \to ([-1,1]^n , (-1,1)^n)
\]
(cf. Definition~\ref{def:cell}).

\begin{definition}
Let $S \subset \R^n$ be a semi-algebraic set.
We say that {\em complexity} of $S$ is bounded by a natural number $N$ if there
exists a quantifier-free first-order formula $\Phi$ defining $S$ such that
$N \geq s d$, where $s$ (respectively, $d$)  is the the number (respectively, maximum degree)
of the polynomials appearing in $\Phi$.
By the complexity of a semi-algebraic map we mean the complexity of its graph.
\end{definition}

The idea of the proof is to show that for a fixed $N$ the statement that ``any semi-monotone
set with complexity $N$ is a regular cell'' can be expressed by a first-order formula of the
theory of $\R$ (with integer coefficients), and therefore is true as long as it is true for
$\R= \Real$, due to the
Tarski-Seidenberg transfer principle (\cite{BCR}, Proposition~5.2.3).
(Note that the direct repetition for arbitrary $\R$ of the proof
from Section~\ref{sec:regular} is probably impossible because $\R$ may be non-archimedean.)

\begin{lemma}\label{le:uniform_homeomorphism}
For any pair of semi-algebraic sets $(T_1,T_2)$, with $T_2 \subset T_1 \subset \R^n$,
there exists a natural-valued function $F(N,n)$ with the following property.
Let pairs $(S_1,S_2)$ and $(T_1,T_2)$ of semi-algebraic sets
be semi-algebraically homeomorphic, where the sets $S_2 \subset S_1 \subset \R^n$
have complexities bounded by $N$, and $S_1$ is closed and bounded.
Then there exists a semi-algebraic homeomorphism
$$f:\> (S_1,S_2) \to (T_1,T_2)$$
with complexity bounded by $F(N,n)$.
\end{lemma}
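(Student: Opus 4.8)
The plan is to derive the uniform complexity bound from Hardt's semi-algebraic triviality theorem applied to the ``universal'' family of all pairs of complexity at most $N$, together with the elementary fact that composition and inversion of semi-algebraic maps keep complexity under control. First I would set up the parameter space. Since $sd \le N$ forces $s \le N$ and $d \le N$, there are only finitely many admissible pairs $(s,d)$, finitely many Boolean combinations of $s$ sign conditions, and a fixed number $M=M(s,d,n)$ of coefficients of $s$ polynomials of degree $\le d$ in $n$ variables. Hence every pair with $S_2 \subset S_1 \subset \R^n$ of complexity $\le N$ and $S_1$ closed and bounded occurs as a fibre $(S_1^\lambda, S_2^\lambda)$ of a semi-algebraic family $(\mathcal{S}_1,\mathcal{S}_2) \subset \R^n \times \Lambda$ over a semi-algebraic parameter set $\Lambda$, namely a finite union of coefficient spaces cut out by the (semi-algebraic) conditions $S_2^\lambda \subset S_1^\lambda$ and ``$S_1^\lambda$ closed and bounded''. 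For fixed $N$ and $n$ this is one specific semi-algebraic family.

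Next I would apply Hardt's theorem \cite{BCR} to the family of \emph{pairs} over $\Lambda$: it partitions $\Lambda$ into finitely many semi-algebraic pieces $\Lambda_1,\dots,\Lambda_m$ so that over each $\Lambda_i$ the family is semi-algebraically trivial, i.e. there is a semi-algebraic homeomorphism of pairs $\tau_i \colon \Lambda_i \times (S_1^{\lambda_i}, S_2^{\lambda_i}) \to \bigcup_{\lambda \in \Lambda_i} \{\lambda\}\times(S_1^\lambda, S_2^\lambda)$ over $\Lambda_i$, for a chosen model parameter $\lambda_i \in \Lambda_i$. Write $\tau_i(\lambda,\cdot)=(\lambda,h_\lambda(\cdot))$; restricting to a single fibre gives a semi-algebraic homeomorphism $h_\lambda \colon (S_1^{\lambda_i}, S_2^{\lambda_i}) \to (S_1^\lambda, S_2^\lambda)$ whose complexity is bounded by that of $\tau_i$. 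In particular all fibres over a given $\Lambda_i$ are mutually semi-algebraically homeomorphic, so by transitivity either every fibre over $\Lambda_i$ is homeomorphic to $(T_1,T_2)$ or none is. Note that for this qualitative conclusion I only need the plain finiteness statement of Hardt's theorem, not any effective form: for each fixed $(N,n)$ the finitely many pieces and trivializations have some finite complexities, and a maximum of finitely many finite numbers is finite.

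Now I would assemble $f$. Under the hypotheses, the given $(S_1,S_2)$ is a fibre $(S_1^{\lambda_0}, S_2^{\lambda_0})$ with $\lambda_0 \in \Lambda_i$ for some $i$, and since this fibre is homeomorphic to $(T_1,T_2)$, so is the model fibre over $\Lambda_i$. For each piece $\Lambda_i$ whose model fibre is homeomorphic to $(T_1,T_2)$ I fix once and for all a single semi-algebraic homeomorphism $g_i \colon (S_1^{\lambda_i}, S_2^{\lambda_i}) \to (T_1,T_2)$ of some finite complexity (depending on $(T_1,T_2)$, $N$, $n$, but \emph{not} on $\lambda_0$). Then $f := g_i \circ h_{\lambda_0}^{-1}$ is the desired semi-algebraic homeomorphism $(S_1,S_2) \to (T_1,T_2)$, and by the standard Tarski--Seidenberg degree estimates for composition and inversion its complexity is bounded by a quantity depending only on the complexities of $g_i$ and $\tau_i$. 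Defining $F(N,n)$ to be the maximum of these quantities over the finitely many admissible $(s,d)$, Boolean shapes, and pieces $\Lambda_i$ produces the required function.

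The main obstacle is conceptual, not computational: the relation ``$(S_1^\lambda, S_2^\lambda)$ is semi-algebraically homeomorphic to $(T_1,T_2)$'' is \emph{not} directly first-order in $\lambda$, since the homeomorphism ranges over semi-algebraic maps of a priori unbounded complexity, which do not form a fixed finite-dimensional family. Hardt's theorem is precisely what circumvents this, replacing the unquantified ``there exists a homeomorphism'' by a concrete bounded-complexity trivialization on each piece, so that the only genuinely $(T_1,T_2)$-dependent ingredient is the fixed finite list of maps $g_i$ from the model fibres. The remaining points to verify are that the trivialization can be taken simultaneously for the pair $(\mathcal{S}_1,\mathcal{S}_2)$ and that fibre restriction, inversion, and composition keep complexity bounded; both are standard. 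Finally, I would note that all of the above is valid over an arbitrary real closed field $\R$, since Hardt's theorem and the complexity estimates for semi-algebraic maps are proved purely semi-algebraically and do not require $\R$ to be archimedean.
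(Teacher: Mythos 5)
Your proof is correct, but it rests on a different key tool than the paper's. The paper normalizes $(S_1,S_2)$ via the triangulation theorem for semi-algebraic sets (\cite{BCR}, Theorem~9.2.1): it asserts a bound $H(N,n)$ on the number of simplices and on the complexity of a semi-algebraic triangulating homeomorphism $h:(S_1,S_2)\to(|K_1|,|K_2|)$, then uses the finiteness of simplicial complexes with at most $H(N,n)$ simplices to fix one semi-algebraic homeomorphism $g$ from each such simplicial pair onto $(T_1,T_2)$ (whenever one exists), and sets $f=g\circ h$, bounding the complexity of the composition by effective quantifier elimination. You instead normalize via Hardt's semi-algebraic triviality theorem applied to the universal family of all pairs of complexity at most $N$: your finitely many pieces $\Lambda_i$ with their trivializations play the role of the finitely many simplicial pairs with their triangulating homeomorphisms, your model fibres replace $(|K_1|,|K_2|)$, and $f=g_i\circ h_{\lambda_0}^{-1}$ replaces $g\circ h$; the non-effective choice of one map onto $(T_1,T_2)$ from each of finitely many models, and the quantifier-elimination control of compositions and inverses, are common to both arguments. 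Your route buys two things: the uniformity in the parameter is automatic, because Hardt's theorem is invoked once for a single fixed semi-algebraic family (whereas the paper's uniform bound $H(N,n)$ is itself a uniformity claim about triangulations of \emph{all} sets of complexity at most $N$, whose justification needs either an effective triangulation construction or a family argument of exactly your kind), and you never use the hypothesis that $S_1$ is closed and bounded, so you prove a slightly more general statement. The paper's route is shorter granted the uniform triangulation theorem, and its intermediate models, being simplicial pairs, are combinatorially explicit, which makes the finiteness step transparent; as you note, both arguments are valid verbatim over an arbitrary real closed field.
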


\begin{proof}
It follows from the theorem on triangulations of semi-algebraic sets (\cite{BCR}, Theorem~9.2.1)
that there exists a natural-valued function $H(N,n)$ having the following property.
There exist a finite simplicial complex $K_1$
having at most $H(N,n)$ simplices, a union of its simplices $K_2$, and a semi-algebraic
homeomorphism
\[
h:\> (S_1,S_2) \to (|K_1|,|K_2|),
\]
such that the complexity of $h$ is also bounded by $H(N,n)$.
Since the number of simplicial complexes having at most $H(N,n)$ simplices
is finite, there is a natural-valued function $G(N,n)$, bounding the complexity of any
semi-algebraic homeomorphism
$$g:\> (|K_1|, |K_2|) \to (T_1,T_2).$$
Thus, there exists a semi-algebraic homeomorphism,
$$f := g \circ h:\> (S_1,S_2) \to  (T_1,T_2)$$
with complexity bounded by some natural-valued function $\phi$ of $H(N,n)$ and $G(N,n)$ which
can be explicitly described using bounds on effective quantifier elimination.
Define $F(N,n):= \phi (H(N,n), G(N,n))$.
\end{proof}

One can consider a semi-algebraic subset in $\R^m \times \R^n$ as a {\em semi-algebraic family
of subsets of} $\R^n$ parameterized by points of $\R^m$.
Using again the theorem on triangulations of semi-algebraic sets, it is
easy to check that the family of semi-algebraic subsets of $\R^n$ of complexity bounded
by $N$, which are semi-monotone, is a semi-algebraic family.

\begin{theorem}\label{th:real-closed}
Let $\R$ be a real closed field.
An open bounded semi-algebraic and semi-monotone subset of $\R^n$
is a semi-algebraic regular cell.
\end{theorem}

\begin{proof}
It suffices to prove the theorem for all open bounded semi-algebraic and semi-monotone
sets of complexity bounded by $N$ for each $N > 0$.
Fix $N$.
Since the family of all such sets is a semi-algebraic family,
Lemma~\ref{le:uniform_homeomorphism}, for
$$(T_1,T_2)=([-1,1]^n, (-1,1)^n),$$
implies that the existence of the required homeomorphism is expressible as a
sentence in the language of the first-order theory of the field $\R$ with integer coefficients.
The Tarski-Seidenberg transfer principle now implies that it suffices
to prove the truth of this sentence for any one particular real closed field.
The theorem follows since we have proved the truth of the sentence for $\R = \mathbb{R}$
in Theorem~\ref{regularcell}.
\end{proof}

\section{Regular cells in the sense of van den Dries are not\\ regular}\label{sec:VDD}

In o-minimality theory the following classes of topological cells and continuous functions
are considered, which are also based on the idea of monotonicity.
In \cite{VDD} these cells and functions are called {\em regular}, we will call them
{\em vdD-regular}.

\begin{definition}[cf. Theorem~\ref{th:band}]
An  {\em (open) cylindrical cell} $X \subset \Real^n$ is an open subset defined by induction
as follows.
For $n=0$, $X$ is the point.
Let $X$ be a cylindrical cell in $\Real^{n-1}$, and $f,g:\> X \to \Real$ be two continuous
functions such that $f(\x) < g(\x)$ for all $\x \in X$.
Then $\{ (\x, t) |\> \x \in Y,\ f(\x) <t <g(\x) \}$ is a cylindrical cell in $\Real^{n+1}$.
\end{definition}

\begin{definition}[\cite{VDD}]
A cylindrical cell $X \subset \Real^n$ is {\em vdD-regular} if for each $1 \le i \le n$,
any two points $\x=(x_1, \ldots ,x_n),\> \y=(y_1, \ldots ,y_n) \in X$ and a point
$\z=(z_1, \ldots ,z_n) \in \Real^n$, such that $x_j=y_j=z_j$ for all $j \neq i$, the condition
$x_i < z_i < y_i$ implies $\z \in X$.

Let $X$ be a vdD-regular cell.
A continuous function $f:\> X \to \Real$ is {\em vdD-regular}
if for each $1 \le i \le n$ it is either strictly increasing or strictly decreasing
or is constant along the coordinate $i$.
Herewith, $f$ is strictly increasing along the coordinate $i$ if for any two points
$\x=(x_1, \ldots ,x_n),\> \y=(y_1, \ldots ,y_n) \in X$, such that $x_j=y_j$ for all $j \neq i$,
and $x_i < y_i$, we have $f(\x) < f(\y)$.
Similarly functions strictly decreasing and functions constant along the coordinate $i$ are defined.
\end{definition}

The following example shows that a vdD-regular cell may not be regular, and that a vdD-regular
function defined on a vdD-regular cell may not be supermonotone (or submonotone).

\begin{example}
Consider the 3-simplex
$$X := \{ (x,y,z) \in \Real^3|\> 0<x,\> 0<y,\> 0< z <1,\> x+y < z \},$$
and the continuous function $h:\> X \to \Real$ defined by
$$h(x,y,z)=(x/z)^2+(y/z)^2.$$
Observe that $X$ is vdD-regular cylindrical cell, while $h$ is vdD-regular function.
It is easy to see that for every $t \in (1/2,1)$ the level set
$$\{ (x,y,z) \in X|\> h(x,y,z)=t \}$$
is not connected, while its closure is a cone with the vertex at the origin
and the base consisting of two disjoint arcs of a circle.
Hence the graph of $h$ itself is not a regular cell.
It follows that the vdD-regular cell
$$\{ (x,y,z,t)|\> (x,y,x) \in X, 0 < t < h(x,y,z) \}$$
is not a regular cell.

Note that the set $\{ (x,y,z) \in X|\> h(x,y,z)> 1/2 \}$ consists of two connected
components, and therefore is not semi-monotone.
Therefore the vdD-regular function $h$ is not supermonotone.
\end{example}

\section{Semi-monotone sets and regular Boolean functions}\label{sec:boolean}

Consider a Boolean function $\psi=\psi(\xi_1,\dots,\xi_n)$ in $n$ Boolean variables
$\xi_j\in\{0,1\}$.
For any $j=1,\dots,n$ and $c\in\{0,1\}$, let $\psi_{j,c}$ be the restriction of $\psi$ to
the subspace ${\mathcal C}_{j,c}=\{\xi_j=c\} \subset \{ 0,1 \}^n$.
Operations
$$E_j(\psi)=\psi_{j,0}\lor\psi_{j,1}\quad \text{and}\quad A_j(\psi)=\psi_{j,0}\land\psi_{j,1}$$
assign to $\psi$ two Boolean functions in $n-1$ variables.
Operations $E_j(\psi)$ and $A_j(\psi)$ can be also defined by formulae
$\exists \xi_j\;\psi(\xi_1,\dots,\xi_n)$ and $\forall \xi_j\;\psi(\xi_1,\dots,\xi_n)$,
respectively.

\begin{definition}\label{regular}
A {\sl regular} Boolean function is defined inductively as follows.
Any univariate Boolean function is regular.
A Boolean function $\psi(\xi_1,\xi_2)$ is regular
if the set $\{\psi=1\}$ is neither $\{(1,0),(0,1)\}$ nor $\{(0,0),(1,1)\}$.
Equivalently, $\psi(\xi_1,\xi_2)$ is regular if $E_1(A_2(\psi))=A_2(E_1(\psi))$ or
$A_1(E_2(\psi))=E_2(A_1(\psi))$.
For $n>2$, a Boolean function $\psi$ on $\{0,1\}^n$
is regular if the following two conditions are satisfied:
\begin{itemize}
\item[(R1)] Restriction of $\psi$ to each Boolean square
$$\{ \xi_{j_1}=c_1,\dots,\xi_{j_{n-2}}=c_{n-2}\},\quad 1\le j_1< \dots<j_{n-2}\le n,$$
is regular.
\item[(R2)]
The functions $E_n(\psi)$ and $A_n(\psi)$ are regular.
\end{itemize}
\end{definition}

\begin{lemma}\label{three}
Let $\psi(\xi_1,\xi_2,\xi_3)$ be a Boolean function
such that the functions $\psi_{j,c}$ are regular for all $j=1,2,3$ and $c\in\{0,1\}$.
\begin{enumerate}
\item
If $E_3(\psi)$ is not regular then $\{\psi=1\}$ is one
of the four sets each consisting of two diagonally opposite vertices of $\{ 0,1 \}^3$.
\item
If $A_3(\psi)$ is not regular then $\{\psi=0\}$ is one
of the four sets each consisting of two diagonally opposite vertices of $\{ 0,1 \}^3$.
\end{enumerate}
\end{lemma}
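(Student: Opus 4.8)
The plan is to prove part (1) directly and then obtain part (2) from it by De~Morgan duality.

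First I would observe that $E_3(\psi)(\xi_1,\xi_2)=\psi_{3,0}(\xi_1,\xi_2)\lor\psi_{3,1}(\xi_1,\xi_2)$ is exactly the image of $\{\psi=1\}$ under the projection of $\{0,1\}^3$ onto the square $\{0,1\}^2$ along the $\xi_3$-axis: the value of $E_3(\psi)$ at $(\xi_1,\xi_2)$ is $1$ precisely when the column $\{(\xi_1,\xi_2,0),(\xi_1,\xi_2,1)\}$ meets $\{\psi=1\}$. Hence, by Definition~\ref{regular}, $E_3(\psi)$ being non-regular means this projection is one of the two diagonals of the square. The two diagonal cases are interchanged by the coordinate flip $\xi_2\mapsto 1-\xi_2$, which is an automorphism of $\{0,1\}^3$ carrying diagonally opposite vertices to diagonally opposite vertices and preserving the regularity of every $\psi_{j,c}$, of $E_3(\psi)$ and of $A_3(\psi)$ (it merely swaps the two forbidden diagonals of each affected square). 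So it suffices to treat the case $\{E_3(\psi)=1\}=\{(0,0),(1,1)\}$.

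In this case $\{\psi=1\}$ is contained in the four vertices $(0,0,0),(0,0,1),(1,1,0),(1,1,1)$, and both columns $\{(0,0,0),(0,0,1)\}$ and $\{(1,1,0),(1,1,1)\}$ meet $\{\psi=1\}$; since these columns are disjoint, $\{\psi=1\}$ has at least two elements. Now I would invoke the regularity of the two horizontal slices. The set $\{\psi_{3,0}=1\}$ is a subset of the square-diagonal $\{(0,0),(1,1)\}$ (the projections of the $\xi_3=0$ vertices among the four), and by regularity it cannot equal the whole forbidden diagonal $\{(0,0),(1,1)\}$; hence at most one of $(0,0,0),(1,1,0)$ lies in $\{\psi=1\}$. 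The same argument applied to $\psi_{3,1}$ shows at most one of $(0,0,1),(1,1,1)$ lies in $\{\psi=1\}$. Thus $\{\psi=1\}$ contains at most one vertex in each of the two slices $\{\xi_3=0\}$ and $\{\xi_3=1\}$; together with the lower bound of two vertices this forces exactly one vertex per slice, and, using that each column is nonempty, the only surviving possibilities are $\{(0,0,0),(1,1,1)\}$ and $\{(0,0,1),(1,1,0)\}$, each a pair of diagonally opposite vertices. This proves (1).

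For part (2) I would use $A_3(\psi)=\neg E_3(\neg\psi)$ together with the fact that negation preserves regularity of Boolean functions: on a square, negation interchanges the two forbidden diagonals, hence sends regular functions to regular functions, and this propagates through the inductive Definition~\ref{regular}. Consequently the hypotheses of the lemma hold for $\neg\psi$, since $(\neg\psi)_{j,c}=\neg(\psi_{j,c})$ is regular whenever $\psi_{j,c}$ is, and $A_3(\psi)$ is non-regular if and only if $E_3(\neg\psi)$ is non-regular. Applying part (1) to $\neg\psi$ then gives that $\{\neg\psi=1\}=\{\psi=0\}$ is a pair of diagonally opposite vertices, which is exactly (2). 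The argument is essentially a finite case analysis on the cube, so the genuine content, and the main thing to get right, is the bookkeeping: checking the projection interpretation of $E_3$, verifying that each reduction (the coordinate flip and the passage to $\neg\psi$) preserves both the hypothesis and the conclusion, and confirming that no subset other than the two diagonal pairs survives the slice constraints. Note that only the regularity of $\psi_{3,0}$ and $\psi_{3,1}$ is actually used, which is a useful consistency check against the full hypothesis.
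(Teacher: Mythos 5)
Your proof is correct. Note that the paper's own ``proof'' of this lemma is literally the two words ``Straightforward checking'' --- an appeal to exhaustive finite verification over the cube --- so you are not so much diverging from the paper's argument as actually supplying one. Your organization is sound: reading $E_3(\psi)$ as the projection of $\{\psi=1\}$ along $\xi_3$, normalizing to the diagonal $\{(0,0),(1,1)\}$ via the flip $\xi_2\mapsto 1-\xi_2$ (which indeed preserves regularity of all restrictions, of $E_3(\psi)$ and $A_3(\psi)$, and maps antipodal pairs to antipodal pairs), then using regularity of $\psi_{3,0}$ and $\psi_{3,1}$ to cap each horizontal slice at one vertex, and finally counting against the two nonempty columns --- this pins $\{\psi=1\}$ down to exactly the antipodal pairs. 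The De Morgan reduction of (2) to (1) is also clean and non-circular: you only need negation-invariance of regularity for \emph{two-variable} functions (complementation swaps the two diagonals of a square), so you never touch Corollary~\ref{cor:commute}, which comes later in the paper and depends on this lemma. What your route buys over brute force: a proof one can read and check locally, plus the observation that each part of the lemma only uses the regularity of $\psi_{3,0}$ and $\psi_{3,1}$, a slight strengthening of the statement that the paper's ``straightforward checking'' would not reveal.
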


\begin{proof}
Straightforward checking.
\end{proof}

\begin{theorem}\label{commute}
A Boolean function $\psi(\xi_1,\dots,\xi_n)$ is regular if an only if
the result of any sequence
of operations $E_j$ and $A_k$ applied to $\psi$ does not depend on
the order of the operations.
\end{theorem}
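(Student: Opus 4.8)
The plan is to reduce the statement to a single elementary computation about pairs of operations and then propagate it by induction on $n$. First I would observe that the order of a whole sequence of operations can be changed one adjacent transposition at a time, so it suffices to understand when two consecutive operations can be swapped. Two operations of the same type always commute: $E_jE_k=E_kE_j$ and $A_jA_k=A_kA_j$, because both amount to taking, respectively, the disjunction or the conjunction over the two eliminated variables, and $\lor,\land$ are associative and commutative. Hence the only genuine question is whether a mixed adjacent pair commutes, i.e. whether $E_jA_k\chi=A_kE_j\chi$ for the intermediate function $\chi$ present at that stage.

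The key computation is the bivariate case. Fixing all variables other than $\xi_j,\xi_k$, one checks directly that $E_jA_k\chi=A_kE_j\chi$ holds pointwise if and only if the corresponding $(\xi_j,\xi_k)$-square of $\chi$ is neither $\{(0,0),(1,1)\}$ nor $\{(0,1),(1,0)\}$, that is, if and only if that square is regular. (One always has $E_jA_k\chi\le A_kE_j\chi$, the two diagonal squares being exactly the cases of strict inequality.) Consequently, for a function $\chi$ with at least two free variables, the identity $E_jA_k\chi=A_kE_j\chi$ of functions of the remaining variables holds precisely when every $(\xi_j,\xi_k)$-square of $\chi$ is regular, which for a regular $\chi$ is guaranteed by condition (R1).

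For the ``only if'' direction I would argue contrapositively and exhibit an order-dependent sequence whenever $\psi$ is not regular, using that the result of a sequence is the resulting Boolean function itself. If (R1) fails, some $(\xi_j,\xi_k)$-square is diagonal, so by the bivariate computation $E_jA_k\psi\ne A_kE_j\psi$ already as functions, producing order dependence from just two operations. If (R2) fails, say $E_n\psi$ is not regular, then by the inductive hypothesis there is an order-dependent sequence of operations applied to $E_n\psi$; prepending $E_n$ produces two orderings of the same multiset of operations on $\psi$ with different results. The case of $A_n\psi$ is symmetric.

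The ``if'' direction is where the work lies. Assuming $\psi$ regular, I want every intermediate function $\chi$ reachable from $\psi$ by operations to again be regular; then (R1) for $\chi$ together with the bivariate computation makes every mixed adjacent swap valid, and the same-type swaps are free, so every reordering gives the same result. Thus the crux is a preservation lemma: if $\psi$ is regular then $E_m\psi$ and $A_m\psi$ are regular for every $m$. I would prove this by induction on $n$, verifying (R1) and (R2) for $E_m\psi$ and $A_m\psi$; condition (R1) and the same-type compositions $E_nE_m$, $A_nA_m$ reduce to the inductive hypothesis after restricting to subcubes. The main obstacle is the mixed composition, for instance showing that $A_nE_m\psi$ is regular: here a same-type swap is unavailable, and this is exactly the point at which Lemma~\ref{three} is needed. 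That lemma says that, when all facet restrictions are regular, the only way an operation can destroy regularity is for the relevant three-variable slice to consist of two antipodal vertices, a diagonal pattern that the regularity of $\psi$ (through (R1) and (R2)) excludes. Propagating this control of the obstruction through the induction yields the preservation lemma and completes the proof.
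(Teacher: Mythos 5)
Your proposal is correct and follows essentially the same route as the paper's proof: induction on $n$, the observation that same-type operations always commute while (R1) validates mixed swaps on the current function, the contrapositive/bivariate argument for the ``only if'' direction, and the reduction of the ``if'' direction to a preservation lemma ($\psi$ regular implies $E_m(\psi)$ and $A_m(\psi)$ regular) whose hard case is settled by Lemma~\ref{three}. The one discrepancy is where Lemma~\ref{three} actually enters: in the paper it is needed to verify (R1) for $\varphi=E_m(\psi)$ on Boolean squares in which $\xi_n$ is a free variable (the antipodal $3$-cube pattern would force a diagonal square in $E_n(\psi)$, contradicting (R2)), whereas the mixed composition $A_n(E_m(\psi))$ that you single out as the main obstacle is in fact easy, since by (R1) it equals $E_m(A_n(\psi))$, and $A_n(\psi)$ is regular by (R2), so the inductive hypothesis applies directly.
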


\begin{proof}
We prove the statement by induction on $n$.
Case $n\le 2$ follows immediately from the definition of a regular function.
Let $n\ge 3$.

Suppose that for a function $\psi$ the result of any sequence
of operations $E_j$ and $A_k$ applied to $\psi$ does not depend on the order of the operations.
This immediately implies (R1).
Since $E_n(\psi)$ and $A_n(\psi)$ are functions in $n-1$ variables, they are regular by
the inductive hypothesis, i.e., the condition (R2) is also true.
Hence, $\psi$ is regular.

Conversely, let $\psi$ be a regular function.
For any Boolean function $\chi$ and any $j\ne k$ we have
$E_j(E_k(\chi))=E_k(E_j(\chi))$ and $A_j(A_k(\chi))=A_k(A_j(\chi))$.
Condition (R1) implies that for the regular $\psi$ and any $j\ne k$, the equality
$E_j(A_k(\psi))=A_k(E_j(\psi))$ is true.
Hence we have only to show that the functions $E_j(\psi)$ and $A_j(\psi)$
are regular for each $j<n$.
We will only prove that $\varphi := E_j(\psi)$ is regular.
The proof for $A_j(\psi)$ is similar.

For $j<n$, the functions $\varphi_{n,0}:=E_j(\psi_{n,0})$ and
$\varphi_{n,1}:=E_j(\psi_{n,1})$ are regular due to the induction hypothesis.

Since $E_n(\psi)$ is regular and $E_n(\varphi)=E_j(E_n(\psi))$, the function
$E_n(\varphi)$ is regular due to the induction hypothesis.
Since $A_n(\psi)$ is regular and $A_n(\varphi)=E_j(A_n(\psi))$ by the condition (R1),
the function $A_n(\varphi)$ is regular due to the induction hypothesis.
Hence it remains to show that the restriction of $\varphi$ to any Boolean square $B$
in $\{ 0,1 \}^{n-1}$ is regular.
If $B$ has the value of $\xi_n$ fixed, this follows from the regularity of  $\varphi_{n,0}$
and $\varphi_{n,1}$.

Suppose that the values of all variables except $\xi_n$ and $\xi_k$,
for some $j\ne k<n-1$, are fixed on $B$, and the restriction of $\varphi$ to $B$ is not regular.
Then intersection of $\{\psi=1\}$ with the corresponding Boolean 3-cube $C$ in $\{0,1\}^n$
(with the values of all variables except $\xi_n,\,\xi_k$ and $\xi_j$ fixed)
consists of two diagonally opposite vertices due to Lemma~\ref{three}.
Hence the restriction of $E_n(\psi)$ to projection of $C$ along
$\xi_n$ is not regular, which contradicts regularity of $E_n(\psi)$.
\end{proof}

\begin{corollary}\label{cor:commute}
Any regular Boolean function $\psi$ remains regular under any permutation
of the variables, replacing any $\xi_j$ by $1-\xi_j$, replacing $\xi_j$ by a constant
$c \in \{ 0,1 \}$ for any $j \in \{1, \ldots n \}$, and replacing $\psi$ by $1-\psi$.
\end{corollary}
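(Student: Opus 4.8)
The plan is to derive all four invariance properties from the order-independence characterization of regularity established in Theorem~\ref{commute}. By that theorem, a Boolean function $\psi$ is regular if and only if, for every finite sequence of operations drawn from $\{E_1,A_1,\dots,E_n,A_n\}$, the result of applying the sequence to $\psi$ is the same no matter in which order the operations are performed. Each of the four transformations in the statement acts on the family of operations $\{E_j,A_j\}$ in a simple and explicit way, and in each case that action visibly preserves order-independence. So the whole corollary reduces to recording a few elementary commutation and duality identities.

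First I would record these identities. For $k\neq j$, restriction to a fixed value of $\xi_j$ commutes with $E_k$ and $A_k$, that is $(E_k(\psi))_{j,c}=E_k(\psi_{j,c})$ and likewise for $A_k$; operations with distinct indices always commute among themselves; by De~Morgan the negation $\psi\mapsto 1-\psi$ exchanges the two operations on each coordinate, $E_j(1-\psi)=1-A_j(\psi)$ and $A_j(1-\psi)=1-E_j(\psi)$ (since $(1-\psi)_{j,c}=1-\psi_{j,c}$); and the substitution $\xi_j\mapsto 1-\xi_j$ merely swaps $\psi_{j,0}$ and $\psi_{j,1}$, hence leaves both $E_j$ and $A_j$ unchanged and commutes with every operation of index $\neq j$.

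Then I would treat the four cases in turn. For a permutation $\pi$ of the variables, the substitution simply relabels the operation set by $\pi$; since order-independence is a statement about arbitrary orderings of an arbitrary multiset of operations, it is invariant under this relabeling, so the permuted function is regular. For the negation of a single variable, the operations $E_j,A_j$ are unaffected and every operation of index $\neq j$ commutes with the substitution, which in any case disappears once $\xi_j$ is eliminated; thus any sequence applied to the transformed function agrees with the same sequence applied to $\psi$, and order-independence transfers. For fixing a variable, set $\chi:=\psi_{j,c}$; every operation $O$ with index $\neq j$ satisfies $O(\chi)=(O(\psi))_{j,c}$, so two orderings of a multiset of such operations yield the $\{\xi_j=c\}$-restrictions of the correspondingly ordered sequences applied to $\psi$, and these agree because $\psi$ is regular. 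Finally, for the negation of the function, the De~Morgan identities give, by an immediate induction on the length of a sequence $S$, that $S(1-\psi)=1-S^{*}(\psi)$, where $S^{*}$ is obtained from $S$ by interchanging every $E_j$ with $A_j$; since Theorem~\ref{commute} guarantees order-independence of $\psi$ for \emph{all} sequences, and in particular for all dual sequences $S^{*}$, the function $1-\psi$ is order-independent and therefore regular.

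The argument is essentially bookkeeping, and I expect no serious obstacle. The one point needing a little care is the function-negation case, because there the transformation does not fix the operation set but rather swaps $E_j\leftrightarrow A_j$; the reason this causes no trouble is precisely that the characterization quantifies over all sequences of $E$'s and $A$'s, so the dual sequences $S^{*}$ are already covered. A secondary point worth making explicit is that restriction and elimination across distinct coordinates commute, which is what makes the variable-fixing and variable-negation cases go through cleanly.
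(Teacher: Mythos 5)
Your proposal is correct: all four invariances do follow from the order-independence characterization of Theorem~\ref{commute}, and your commutation and De~Morgan identities (including the key point that negating $\psi$ swaps $E_j\leftrightarrow A_j$, which is harmless because the characterization quantifies over \emph{all} sequences) are exactly the bookkeeping needed. The paper's own proof is just the word ``Straightforward,'' and since the corollary is placed immediately after Theorem~\ref{commute}, your argument is evidently the intended one, merely written out in full.
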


\begin{proof}
Straightforward.
\end{proof}

Consider the cube $[-1,1]^n \subset \Real^n$ as a union
of $2^n$ closed unit cubes with the common vertex at the origin.
Shifting the center of a unit cube by $(1/2, \ldots ,1/2)$ assigns a point in $\{0,1\}^n$
to this unit cube.
In this way, the unit cubes correspond bijectively to the points of $\{0,1\}^n$.

\begin{theorem}\label{cube}
A Boolean function $\psi (\xi_1, \ldots ,\xi_n) \not\equiv 0$ is regular if and only if
the union $C_\psi$ of closed unit cubes corresponding to points
$(\xi_1, \ldots , \xi_n) \in \{ \psi =1 \}$ is a closed PL $n$-ball.
\end{theorem}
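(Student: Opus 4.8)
The plan is to argue by induction on $n$, deducing the equivalence at level $n$ from the equivalence at all lower levels. The base cases $n\le 2$ are a finite check: for $n=1$ every non-zero $\psi$ is regular and $C_\psi$ is a closed interval, while for $n=2$ the only non-regular non-zero functions are the two ``diagonal'' ones, whose $C_\psi$ is a pair of unit squares meeting in the single point at the origin, which is not a $2$-ball; every other non-zero $\psi$ yields one, two (edge-adjacent), three, or four squares, each a $2$-ball.

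For the inductive step I split $[-1,1]^n$ along $\{x_n=0\}$. Writing $\psi_{n,0},\psi_{n,1}$ for the restrictions and $E_n(\psi),A_n(\psi)$ for the functions of Definition~\ref{regular}, the correspondence between unit cubes and points of $\{0,1\}^n$ shows that the part of $C_\psi$ in $\{x_n\ge 0\}$ is $B_+:=C_{\psi_{n,1}}\times[0,1]$, the part in $\{x_n\le 0\}$ is $B_-:=C_{\psi_{n,0}}\times[-1,0]$, the vertical projection of $C_\psi$ to $\{x_n=0\}$ is $C_{E_n(\psi)}$, and the locus of points of $C_\psi$ in $\{x_n=0\}$ admitting a full two-sided $x_n$-neighbourhood in $C_\psi$ is $C_{A_n(\psi)}\times\{0\}$. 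For the forward direction, if $\psi$ is regular then $\psi_{n,0},\psi_{n,1}$ are regular by Corollary~\ref{cor:commute} and $E_n(\psi),A_n(\psi)$ are regular by (R2). If one restriction, say $\psi_{n,0}$, is $\equiv 0$, then $C_\psi=C_{\psi_{n,1}}\times[0,1]$ is the product of an $(n-1)$-ball (inductive hypothesis) with an interval, hence an $n$-ball. Otherwise both $B_\pm$ are $n$-balls by the inductive hypothesis, and I claim they are glued along the $(n-1)$-ball $C_{A_n(\psi)}\times\{0\}$ lying in $\partial B_+\cap\partial B_-$, so that Proposition~\ref{pr:cor3.16} gives that $C_\psi=B_+\cup B_-$ is an $n$-ball.

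The crux, and the step I expect to be the main obstacle, is showing that this gluing is clean: that $B_+\cap B_-$ is exactly $C_{A_n(\psi)}\times\{0\}$ with no stray lower-dimensional contacts, and that $C_{A_n(\psi)}$ is non-empty whenever both halves are. A stray contact would be a shared face of an ``upper-only'' cube $\hat v$ and a ``lower-only'' cube $\hat w$ not already contained in $C_{A_n(\psi)}$; together with the $x_n$-direction such a pair spans a coordinate subcube on which $\psi$ restricts to two antipodal vertices. When $\hat v,\hat w$ differ in a single coordinate this is a forbidden diagonal square, excluded by (R1); when they differ in several coordinates one projects the antipodal configuration down the $x_n$-axis and invokes Lemma~\ref{three} with (R2) to conclude that $E_n(\psi)$ would then have a non-regular square restriction, contradicting its regularity. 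The same dichotomy shows that a regular $\psi$ with both halves non-empty cannot have $A_n(\psi)\equiv 0$, since that would disconnect $C_\psi$. This establishes the clean gluing and completes the forward implication.

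For the reverse direction I argue by contraposition. If (R1) fails, some square restriction of $\psi$ is a diagonal; fixing the remaining $n-2$ coordinates to the square's values exhibits a point of $C_\psi$ with a neighbourhood PL-homeomorphic to two opposite closed quadrants of a plane times $[-1,1]^{n-2}$, which is not a manifold point, so $C_\psi$ is not a ball. If (R1) holds but (R2) fails, say $E_n(\psi)$ is not regular, then by the inductive hypothesis $C_{E_n(\psi)}$ is not an $(n-1)$-ball; using that, when (R1) holds, $C_\psi$ is a band over its footprint $C_{E_n(\psi)}$ with $C_{A_n(\psi)}$ as the doubled region, I extract from the ball structure of $C_\psi$ — through the retraction onto the footprint together with the appendix gluing results — that $C_{E_n(\psi)}$, and likewise $C_{A_n(\psi)}$, must itself be a ball, a contradiction. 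Making this last extraction precise, so that ball-ness of the total space forces ball-ness of the footprint and of the doubled region, is the delicate point of the reverse implication.
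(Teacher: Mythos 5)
Your skeleton is the same as the paper's: induction on $n$, the splitting $C_\psi=B_+\cup B_-$ with $B_+=C_{\psi_{n,1}}\times[0,1]$ and $B_-=C_{\psi_{n,0}}\times[-1,0]$, gluing along $C_{A_n(\psi)}\times\{0\}$ via Proposition~\ref{pr:cor3.16}, and a converse recovering (R1) and (R2). But both steps you yourself call the crux contain genuine gaps. In the forward direction, your exclusion of stray contacts fails when the upper cube $\hat v$ and the lower cube $\hat w$ differ in $d\ge 4$ coordinates: Lemma~\ref{three} is a statement about \emph{three} Boolean variables, so it can only see $d=3$, and for $d\ge 4$ (e.g.\ $\psi(1,\ldots,1)=\psi(0,\ldots,0)=1$ with the two cubes meeting only at the origin, a case you must rule out for every $n\ge 4$) projecting along $x_n$ leaves an antipodal pair in a $(d-1)$-cube with $d-1\ge 3$, which is not a square restriction of $E_n(\psi)$, so no contradiction with the regularity of $E_n(\psi)$ is produced. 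Even for $d=3$, Lemma~\ref{three} runs the wrong way: what you need is that a regular function cannot take value $1$ at two antipodal vertices while having no vertical pair, which requires a separate check. The statement you actually need is Corollary~\ref{zero}(1) applied to the restriction $\chi$ of $\psi$ to the subcube spanned by the differing coordinates $\{j_1,\ldots,j_{d-1},n\}$ --- but the paper deduces that corollary \emph{from} Theorem~\ref{cube}, so it must be proved independently here. A clean repair: $\chi$ is regular by Corollary~\ref{cor:commute}, strayness of the contact says exactly that $A_n(\chi)\equiv 0$, and Theorem~\ref{commute} gives
$$0\equiv E_{j_1}\cdots E_{j_{d-1}}(A_n(\chi))=A_n\bigl(E_{j_1}\cdots E_{j_{d-1}}(\chi)\bigr)=1,$$
the last equality because both $x_n$-levels of $\chi$ are non-empty --- a contradiction. (The paper argues differently: it takes a common face of maximal dimension and descends by coordinate slices until both halves are single antipodal cubes.) The same $d\ge 4$ hole undermines your claim that a regular $\psi$ with both halves non-empty has $A_n(\psi)\not\equiv 0$; note also that such a configuration would not ``disconnect'' $C_\psi$ (all $2^n$ cubes share the origin) --- it destroys manifoldness at the contact.

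In the converse, your (R1) step (a diagonal square restriction yields a point whose neighbourhood is two opposite quadrants times $\Real^{n-2}$, hence a non-manifold point) is correct, and in fact more explicit than the paper's treatment. But the (R2) step is not a proof: the implication ``$C_\psi$ a PL ball $\Rightarrow$ $C_{E_n(\psi)}$ and $C_{A_n(\psi)}$ are PL balls'' is precisely what has to be established, and you leave it as a plan. None of Propositions~\ref{pr:le1.10}--\ref{pr:shiota} yields it, projections and hyperplane slices of PL balls are not balls in general, and the configurations to be excluded are nontrivial even assuming (R1): the four cubes at $(0,0,0,0)$, $(1,0,0,0)$, $(0,1,1,1)$, $(1,1,1,1)$ satisfy (R1) yet form two slabs glued along a segment, so one must show that such local pictures cannot occur inside a ball $C_\psi$ (a local homology argument along the segment does it, but it has to be made). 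So the proposal is incomplete at exactly the two places you flag: the forward gluing for deep stray contacts (the stated mechanism fails for $d\ge 4$), and the converse (R2) extraction (missing proof).
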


\begin{proof}
Induction on $n$, where the base $n=2$ follows from Definition~\ref{regular}.

Suppose a Boolean function $\psi$ is regular.

Let $C_{\psi +}$ (respectively, $C_{\psi -}$) be the union of unit cubes corresponding to vertices
with $\xi_n=1$ (respectively, $\xi_n=0$).
Since, due to Corollary~\ref{cor:commute}, functions $\psi (\xi_1, \ldots ,\xi_{n-1},1)$ and
$\psi (\xi_1, \ldots ,\xi_{n-1},0)$ are regular, both $C_{\psi +}$ and $C_{\psi -}$ are closed PL
$n$-balls, by the inductive hypothesis.
Due to Proposition~\ref{pr:cor3.16}, it is sufficient to prove that the intersection
$$C_{\psi 0}:=C_{\psi +} \cap C_{\psi -} \subset ([-1,1]^n \cap \{ x_n=0 \})$$
is a closed PL $(n-1)$-ball.

If $C_{\psi 0}$ is pure $(n-1)$-dimensional, then $C_{\psi 0}=C_{A_n(\psi)}$.
Since $A_n(\psi)$ is regular, $C_{\psi 0}$ is PL $(n-1)$-ball, by the inductive hypothesis.

We now show that $C_{\psi 0}$ is indeed pure $(n-1)$-dimensional.
Suppose that, on the contrary,  $C_{\psi 0}$ contains a common $m$-face $F$ of a unit cube in
$C_{\psi +}$ and a unit cube in $C_{\psi -}$, with
$0 \le m < n$, and $F$ is not contained in any common face of a larger dimension.

Let $m>0$.
Then for some $1 \le i \le n-1$ and $c \in \{ 0,1 \}$ the set $F \cap \{ x_i=c \}$
is a common $(m-1)$-face of some unit cubes in $C_{\psi +} \cap \{ x_i=c \}$ and
$C_{\psi +} \cap \{ x_i=c \}$, which is not contained in any common face of a larger dimension.
Hence, for the restriction $\psi_{i,c}$ of $\psi$ on $\{ \xi_i=c \}$, the set $C_{\psi_{i,c}}$
is not a PL $(n-1)$-cube, therefore, by the inductive hypothesis, $\psi_{i,c}$ is not regular.
This contradicts Corollary~\ref{cor:commute}.

Now, let $m=0$.
This can only happen when each of $C_{\psi +}$ and $C_{\psi -}$ consists of just one cube,
and this pair of cubes corresponds to diagonally opposite vertices of $[-1,1]^n$.
Then $\psi$ is not regular, which is a contradiction.

Conversely, suppose for a Boolean function $\psi$ the set $C_{\psi}$ is a PL ball.
Then for all $1 \le i \le n$ and $c \in \{ 0,1 \}$ the sets $C_{\psi_{i,c}}$ are also
PL $(n-1)$-balls, hence, by the inductive hypothesis, all functions $\psi_{i,c}$ are regular.
This implies the condition (R1) for $\psi$.

The set $C_{E_n(\psi)}$ is the projection of $C_{\psi}$ along the coordinate $x_n$, and is,
therefore, a PL $(n-1)$-ball.
Hence $E_n(\psi)$ is a regular function by the inductive hypothesis.
The intersection of two PL $n$-balls $C_{\psi +}$ and
$C_{\psi -}$, defined above, is a PL $(n-1)$-ball, and it coincides with $C_{A_n(\psi)}$.
Therefore $A_n(\psi)$ is a regular function by the inductive hypothesis.
It follows that the condition (R2) is also satisfied, and $\psi$ is regular by the definition.
\end{proof}

\begin{corollary}\label{zero}
For a regular Boolean function $\psi$,
\begin{enumerate}
\item
$A_j(\psi) \equiv 0$ if and only if either $\psi_{j,0}\equiv 0$ or $\psi_{j,1}\equiv 0$;
\item
$E_j(\psi) \equiv 1$ if and only if either $\psi_{j,0}\equiv 1$ or $\psi_{j,1}\equiv 1$.
\end{enumerate}
\end{corollary}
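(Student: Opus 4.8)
The plan is to deduce both parts from the geometric characterization in Theorem~\ref{cube}, reducing (2) to (1) by complementation. First note that the ``if'' direction of each part is immediate and uses no regularity: if $\psi_{j,0}\equiv 0$ or $\psi_{j,1}\equiv 0$ then $A_j(\psi)=\psi_{j,0}\land\psi_{j,1}\equiv 0$, and dually for $E_j$. All the content is in the ``only if'' directions.

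For the ``only if'' direction of (1), I would first dispose of the trivial case $\psi\equiv 0$, where $\psi_{j,0}\equiv\psi_{j,1}\equiv 0$. Assuming $\psi\not\equiv 0$, Theorem~\ref{cube} tells us that $C_\psi$ is a closed PL $n$-ball, hence connected. Using Corollary~\ref{cor:commute} to permute the variables so that $j$ plays the role of the last coordinate, split $C_\psi$ into its part $C_{\psi+}$ in $\{x_j\ge 0\}$ and its part $C_{\psi-}$ in $\{x_j\le 0\}$; these are the geometric realizations of $\psi_{j,1}$ and $\psi_{j,0}$ respectively, so $C_{\psi+}=\emptyset$ is equivalent to $\psi_{j,1}\equiv 0$ and $C_{\psi-}=\emptyset$ to $\psi_{j,0}\equiv 0$. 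The key geometric input, established inside the proof of Theorem~\ref{cube}, is that for a regular $\psi$ the intersection $C_{\psi+}\cap C_{\psi-}$ coincides with $C_{A_j(\psi)}$. Hence $A_j(\psi)\equiv 0$ forces $C_{\psi+}\cap C_{\psi-}=\emptyset$. But $C_{\psi+}$ and $C_{\psi-}$ are disjoint closed sets whose union is $C_\psi$, so if both were nonempty they would disconnect $C_\psi$, contradicting that a ball is connected. Therefore one of $C_{\psi\pm}$ is empty, i.e.\ $\psi_{j,0}\equiv 0$ or $\psi_{j,1}\equiv 0$.

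Part (2) then follows by duality. By Corollary~\ref{cor:commute}, $1-\psi$ is regular whenever $\psi$ is, and De Morgan's laws give $E_j(\psi)=1-A_j(1-\psi)$ together with $(1-\psi)_{j,c}=1-\psi_{j,c}$ for $c\in\{0,1\}$. Thus $E_j(\psi)\equiv 1$ is equivalent to $A_j(1-\psi)\equiv 0$; applying part (1) to the regular function $1-\psi$ yields $(1-\psi)_{j,0}\equiv 0$ or $(1-\psi)_{j,1}\equiv 0$, i.e.\ $\psi_{j,0}\equiv 1$ or $\psi_{j,1}\equiv 1$.

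The one step I would take most care over is the identification $C_{\psi+}\cap C_{\psi-}=C_{A_j(\psi)}$ for regular $\psi$: a priori the two half-balls could meet along faces of the cube decomposition of dimension below $n-1$ without contributing any full cube to $C_{A_j(\psi)}$, and it is exactly the purity argument in the proof of Theorem~\ref{cube} (excluding common $m$-faces with $m<n-1$) that rules this out and legitimizes the passage from $A_j(\psi)\equiv 0$ to $C_{\psi+}\cap C_{\psi-}=\emptyset$. Granting that, the remainder is a routine connectivity argument together with the complementation duality.
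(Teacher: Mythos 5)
Your proof is correct and takes essentially the same route as the paper's: the ``if'' directions are dismissed as trivial, part (1) is deduced geometrically from Theorem~\ref{cube} by splitting $C_\psi$ into the two half-unions of cubes with $\xi_j=1$ and $\xi_j=0$ and deriving a contradiction when both are nonempty, and part (2) follows by De Morgan duality applied to the regular function $1-\psi$ (via Corollary~\ref{cor:commute}). The only difference is in how the contradiction is closed: the paper notes that $A_j(\psi)\equiv 0$ forces $\dim(C_{\psi,j+}\cap C_{\psi,j-})<n-1$ and declares this incompatible with $C_\psi$ being a PL $n$-ball, whereas you invoke the purity claim from inside the proof of Theorem~\ref{cube} to upgrade this to an empty intersection and then need only the connectedness of a ball --- a legitimate, and in fact somewhat more self-contained, justification of the same step.
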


\begin{proof}
(1)\
Let $C_{\psi,j +}$ (respectively, $C_{\psi, j -}$) be the union of unit cubes corresponding
to vertices with $\xi_j=1$ (respectively, $\xi_j=0$).
Since $A_j(\psi)=\psi_{j,0}\land\psi_{j,1}\equiv 0$, the set $C_\psi$ cannot contain two unit cubes
corresponding to vertices differing only by $j$th coordinate.
It follows that if both $C_{\psi,j +}$ and $C_{\psi, j -}$ are nonempty, then
$\dim (C_{\psi,j +} \cap C_{\psi, j -}) <n-1$.
This contradicts to the fact that $C_\psi$ is a PL $n$-ball.

The converse statement is trivial.

(2)\ Follows from (1) and the De Morgan's law: $E_j(\psi) \equiv 1- A_j(- \psi)$.
\end{proof}

\begin{definition}\label{def:assigne-boolean}
Let $p \in \Real^n$.
The finite set ${\mathcal Z}$ of octants with vertex at $p$ {\em corresponds} to a Boolean
function $\psi$ if, when translated to $0$, the octants in ${\mathcal Z}$ contain exactly all unit
cubes in $[-1,1]^n$ corresponding to points $(\xi_1, \ldots , \xi_n) \in \{ \psi =1 \}$.

Let $U \subset \Real^n$.
If the set ${\mathcal Z}$ of all
octants with the vertex at $p$ and having non-empty intersections with $U$ corresponds to
a Boolean function $\psi$, then we say that $\psi$ {\em is the function at $p$ for $U$}.
\end{definition}

\begin{lemma}\label{le:constant}
Let $U \subset \Real^n$ be a non-empty semi-monotone set, and $U'$ be its projection along
the coordinate $x_n$.
If $p' \not\in U'$ and the function $\varphi$ at $p'$ for $U'$ is
$\varphi \equiv 1$, then there exists $p_n \in \Real$ such that
the function $\psi$ at $p=(p',p_n)$ for $U$ is either not regular or $\psi \equiv 1$.
\end{lemma}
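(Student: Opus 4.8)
We are given a semi-monotone set $U \subset \Real^n$ whose projection $U'$ along $x_n$ is semi-monotone, a point $p' \notin U'$ at which the Boolean function $\varphi$ for $U'$ is identically $1$, and we must produce a fibre coordinate $p_n$ so that the function $\psi$ at $p=(p',p_n)$ for $U$ is either non-regular or identically $1$. The plan is to argue by contradiction: assume that for \emph{every} choice of $p_n$ the function $\psi$ at $(p',p_n)$ is regular and not identically $1$, and derive a contradiction with the semi-monotonicity of $U$ or with $\varphi \equiv 1$.

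\textbf{Relating the fibre Boolean functions to $\varphi$.}
First I would unwind the hypothesis $\varphi \equiv 1$. Since $\varphi$ is the function at $p'$ for $U'$, saying $\varphi \equiv 1$ means every one of the $2^{\,n-1}$ octants of $\Real^{n-1}$ with vertex $p'$ meets $U'$; equivalently, over each such octant's ``shadow'' there are points of $U$. The key observation I would establish is the relationship between $\psi$ at $(p',p_n)$ and $\varphi$ under the projection operation $E_n$: projecting the $n$-dimensional octants along $x_n$ should give $E_n(\psi) = \varphi$ (the union of the two half-octants $\{\xi_n=0\}$ and $\{\xi_n=1\}$ records whether \emph{some} vertical translate meets $U$, which is exactly whether the shadow meets $U'$). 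Thus $E_n(\psi)\equiv 1$ for every $p_n$. Now if $\psi$ is assumed regular, Corollary~\ref{zero}(2) forces that for each $p_n$ either $\psi_{n,0}\equiv 1$ or $\psi_{n,1}\equiv 1$; that is, one of the two horizontal half-spaces through $(p',p_n)$ has \emph{all} its octants meeting $U$.

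\textbf{Exploiting (S1) and monotonicity of the fibre interval.}
The contradiction should come from condition (S1) of Theorem~\ref{th:inductive}: the intersection of $U$ with any line parallel to the $x_n$-axis is an open interval $(f(\x),g(\x))$, and these endpoints vary semi-continuously (Theorem~\ref{th:band}). I would use that $\psi_{n,1}\equiv 1$ means every ``upward'' octant at height $p_n$ meets $U$, i.e. for every sign pattern of the first $n-1$ coordinates, arbitrarily near $p'$ there are points of $U$ with $x_n>p_n$; similarly $\psi_{n,0}\equiv 1$ concerns $x_n<p_n$. The plan is to let $p_n$ range over $\Real$ and track which of the two alternatives holds. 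As $p_n \to -\infty$ the upward alternative $\psi_{n,1}\equiv 1$ holds (since $U$ is bounded, all of $U$ lies above), and as $p_n\to+\infty$ the downward alternative holds. By choosing $p_n$ at a transition value---made precise using the interval structure $(f,g)$ over the octant shadows and the boundedness of $U$---I can arrange that \emph{neither} half-space captures all octants, i.e. $\psi_{n,0}\not\equiv 1$ and $\psi_{n,1}\not\equiv 1$, contradicting the consequence of Corollary~\ref{zero}(2). Since we also assumed $\psi\not\equiv 1$, this contradiction shows $\psi$ must be non-regular at that $p_n$, completing the proof.

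\textbf{The main obstacle.}
I expect the delicate step to be pinning down the transition value of $p_n$ and showing it genuinely defeats \emph{both} half-space alternatives simultaneously. The difficulty is that $\psi_{n,0}\equiv 1$ and $\psi_{n,1}\equiv 1$ are each conjunctions over all $2^{\,n-1}$ sign patterns, and different patterns may switch alternatives at different heights; one must show these switching heights can be made to overlap, or else locate a single $p_n$ lying strictly inside the vertical fibre interval $(f(\x),g(\x))$ for a suitable boundary behaviour of the octant shadows. Controlling this uniformly over all octants---using upper/lower semi-continuity of $f$ and $g$ from Theorem~\ref{th:band} together with $p'\notin U'$ (so $p'$ is genuinely a boundary or exterior point of the shadow, governing how the interval degenerates)---is where the real work lies, and is likely where the o-minimality and the semi-monotone band structure are essential.
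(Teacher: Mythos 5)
Your first two steps coincide with the paper's own proof: the identity $E_n(\psi)=\varphi$ (so $E_n(\psi)\equiv 1$ for every choice of $p_n$), the dichotomy from Corollary~\ref{zero} that regularity of $\psi$ forces $\psi_{n,0}\equiv 1$ or $\psi_{n,1}\equiv 1$, and the observation that $\psi_{n,0}\equiv 1$ for all sufficiently large $p_n$ while $\psi_{n,1}\equiv 1$ for all sufficiently small $p_n$ (this is where $\varphi\equiv 1$ and the boundedness of $U$ enter). Where you diverge is the endgame, and that is exactly the step you leave unproved. Let $A$ be the set of heights $p_n$ at which $\psi_{n,1}\equiv 1$ (every upward octant meets $U$) and $B$ the set where $\psi_{n,0}\equiv 1$; then $A$ is downward closed, $B$ is upward closed, and both are non-empty. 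The paper's endgame is direct: under the standing assumption that $\psi$ is regular at every height, the dichotomy gives $A\cup B=\Real$, hence $A$ and $B$ overlap, and at any height in $A\cap B$ one has $\psi\equiv 1$ --- which is one of the two conclusions the lemma allows, so the proof simply ends there. You instead add the hypothesis $\psi\not\equiv 1$ everywhere (i.e.\ $A\cap B=\emptyset$) and hunt for a height lying in \emph{neither} $A$ nor $B$ in order to contradict the dichotomy. That variant can be made to work, but it requires strictly more than the paper's version, and the fact it requires is precisely the one you defer to ``where the real work lies.''

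The missing fact, in either version, is that $A$ and $B$ are \emph{open} rays, and here your proposed tools (semi-continuity of $f,g$ from Theorem~\ref{th:band}, o-minimality of the structure) are misdirected: what is needed is elementary. For a fixed sign pattern $\sigma\in\{<,>\}^{n-1}$, the set of heights $p_n$ for which the upward octant with pattern $\sigma$ meets $U$ is an open ray $(-\infty,a_\sigma)$, because the octant is cut out by the strict inequality $x_n>p_n$: any witness point $u\in U$ in it satisfies $u_n>p_n$ and remains a witness for every height below $u_n$. Hence $A=\bigcap_\sigma(-\infty,a_\sigma)=(-\infty,\min_\sigma a_\sigma)$ is itself an open ray, and dually $B=(\max_\sigma b_\sigma,+\infty)$. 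This one observation dissolves your worry about ``different patterns switching at different heights'' (the finite minimum over the $2^{n-1}$ patterns handles it) and finishes both endgames: in the paper's version, two open rays in opposite directions that cover $\Real$ must overlap (note that without openness this step genuinely fails --- $A=(-\infty,0]$ and $B=(0,\infty)$ cover $\Real$ with empty intersection --- so the paper is silently using the same fact); in your version, two disjoint open rays cannot cover $\Real$, so $p_n^{*}=\sup A$ lies in neither, giving the contradiction you want. As written, however, your proposal stops short of supplying this, and says so explicitly; the decisive step of the proof is therefore missing, even though the surrounding architecture is sound and essentially that of the paper.
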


\begin{proof}
Let $p' \not\in U'$ and $\varphi \equiv 1$.
Suppose that for every $p_n \in \Real$ the function $\psi$ is regular.
Since $\varphi= E_n(\psi)$ for any $p_n \in \Real$, Corollary~\ref{zero} implies that
for every $p_n$ either $\psi_{j,0}\equiv 1$, or $\psi_{j,1}\equiv 1$.
Observe that $\psi_{j,0}\equiv 1$ for all sufficiently large values of $p_n$, while
$\psi_{j,1}\equiv 1$ for all sufficiently small values of $p_n$.
Therefore there exists an intermediate value of $p_n$ for which $\psi \equiv 1$.
\end{proof}

\begin{theorem}\label{octant}
A non-empty open set $U \subset \Real^n$ is semi-monotone if and only if for
every point $p=(p_1, \ldots ,p_n) \in \Real^n \setminus U$
the Boolean function $\psi$ at $p$ for $U$ is a non-constant regular function.
\end{theorem}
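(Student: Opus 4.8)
The plan is to prove both implications simultaneously by induction on $n$, the base case $n=1$ being immediate (for an interval, the exterior octant function is, up to symmetry, the non-constant regular function $\xi_1$). The engine of the induction is a dictionary relating the Boolean function $\psi$ at a point $p=(p',p_n)\notin U$ to Boolean functions attached to lower-dimensional sets derived from $U$. Writing $U'$ for the projection of $U$ along $x_n$, I will check that $E_n(\psi)$ is exactly the function at $p'$ for $U'$, that the restrictions $\psi_{n,0}$ and $\psi_{n,1}$ are the functions at $p'$ for the projections of $U\cap X_{n,<,p_n}$ and $U\cap X_{n,>,p_n}$, and, more generally, that each $\psi_{i,c}$ is the function at the projected point for the projection of $U\cap X_{i,\sigma,p_i}$ along $x_i$. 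By Corollary~\ref{slice} and Lemma~\ref{le:projection} all of these derived sets are semi-monotone whenever $U$ is. The one identity needing a genuine argument is that $A_n(\psi)$ is the function at $p'$ for the projected slice $U\cap X_{n,=,p_n}$: the nontrivial inclusion (that $A_n(\psi)(\xi')=1$ produces a point of $U$ on $\{x_n=p_n\}$ inside the cylinder over the octant $\xi'$) follows because the intersection of $U$ with that translated coordinate cone is connected (Corollary~\ref{slice}), so $x_n$ attains the value $p_n$ on it by the intermediate value theorem.

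For the forward implication, assume $U$ is semi-monotone and $p\notin U$. Regularity of $\psi$ is checked against Definition~\ref{regular}: condition (R2) holds because $E_n(\psi)$ and $A_n(\psi)$ are the functions at $p'$ for the semi-monotone sets $U'$ and the projected slice $U\cap X_{n,=,p_n}$, hence regular by the induction hypothesis (or identically $0$ or $1$, which are regular); condition (R1) holds because every Boolean square restriction of $\psi$ is a Boolean square of some $\psi_{i,c}$, and each $\psi_{i,c}$ is regular by the same argument applied to a projected slab. For non-constancy, $\psi\not\equiv 0$ is clear since $U$ is nonempty and open, while $\psi\not\equiv 1$ follows from a short sub-induction: if every octant at $p$ met $U$, then $A_n(\psi)\equiv 1$, so every octant at $p'$ meets the semi-monotone projected slice, whence $p'$ lies in that slice by the induction hypothesis and therefore $p\in U$, a contradiction.

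For the reverse implication, assume every exterior octant function is non-constant and regular. I first establish condition (S1) of Theorem~\ref{th:inductive}: if a line parallel to the $x_n$-axis met $U$ in points straddling a gap point $p\notin U$, then small balls of $U$ around those points, which sit on the axis through $p$, would meet every lower and every upper octant at $p$, forcing $\psi\equiv 1$ at the exterior point $p$ and contradicting non-constancy. Thus $U$ is a band $\{f<t<g\}$ over $U'$, with $f$ upper and $g$ lower semi-continuous by openness. It then remains to show that $U'$ is semi-monotone and that $f$ is submonotone and $g$ supermonotone, for then Theorem~\ref{th:band} gives that $U$ is semi-monotone. Via the induction hypothesis each of these reduces to verifying the octant-function hypothesis for $U'$, for $\{f<r\}=\pi(U\cap X_{n,<,r})$, and for $\{g>r\}=\pi(U\cap X_{n,>,r})$. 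Regularity of the relevant functions is automatic, since they are restrictions $\psi_{n,\cdot}$ or the function $E_n(\psi)$ of a regular $\psi$, hence regular by Corollary~\ref{cor:commute}. The delicate point is non-constancy, namely ruling out that such a function is $\equiv 1$ at an exterior point of the projection; for $U'$ this is exactly the content of Lemma~\ref{le:constant}, and for the half-space projections it follows by combining Corollary~\ref{zero} (relating $\psi_{n,\cdot}\equiv 1$ to $E_n(\psi)\equiv 1$) with the fiber-interval structure already provided by (S1).

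The main obstacle is precisely this non-constancy analysis in the reverse direction: the hypothesis controls $\psi$ only at points outside $U$, whereas an exterior point of a projection (say $q\notin\{f<r\}$) may well satisfy $q\in U'$, so one cannot simply invoke the hypothesis for the projection. Lemma~\ref{le:constant} is tailored to bridge exactly this gap for the full projection $U'$, and for $\pi(U\cap X_{n,<,r})$ and $\pi(U\cap X_{n,>,r})$ the argument must separately treat whether the fiber over $q$ lies entirely above or entirely below the cutting level, using (S1) to supply the complementary ``$\equiv 1$'' half and thereby force the forbidden $\psi\equiv 1$. Finally, routing the $\{x_n=c\}$-slice projection through Theorem~\ref{th:band}, instead of attempting to compute its octant function directly, is what lets me avoid re-establishing by hand the connectivity of that middle slice, which is not yet available at this point in the argument.
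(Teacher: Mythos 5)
Your proposal is essentially correct in its overall architecture, but it takes a genuinely different route from the paper in both directions. For the forward implication the paper does not check Definition~\ref{regular} directly: it invokes Theorem~\ref{cube} and proves that the union $C_\psi$ of unit cubes corresponding to $\{\psi=1\}$ is a PL $n$-ball different from $[-1,1]^n$, by splitting $C_\psi$ into upper and lower halves, identifying their intersection with the cube-union of the slice $U\cap X_{n,=,p_n}$ (the same connectedness/intermediate-value observation you use for $A_n(\psi)$), and gluing via Proposition~\ref{pr:cor3.16}. You instead verify (R1) and (R2) through your dictionary ($E_n(\psi)$, $A_n(\psi)$, $\psi_{i,c}$ as octant functions of the projection, the slice, and the slabs) and the inductive hypothesis. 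In the reverse direction the paper, after obtaining semi-monotonicity of $U'$ exactly as you do (Lemma~\ref{le:constant} plus induction), checks connectedness of all cone intersections by hand: Vietoris--Begle for cones not involving $x_n$, then a PL-ball argument via Theorem~\ref{cube} to show that the slice $U\cap X_{n,=,c_n}$ satisfies the octant hypothesis, then a final step for $\sigma_n\in\{<,>\}$. You route everything through Theorem~\ref{th:band} instead, and your treatment of exterior points of $\{f<r\}$ and $\{g>r\}$ (splitting into $q\notin U'$, handled by Corollary~\ref{zero} and Lemma~\ref{le:constant}, versus $q\in U'$ with the fiber entirely on one side of the cut, forcing the complementary half of $\psi$ to equal $1$) is sound. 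Your route avoids any direct appeal to Theorem~\ref{cube} and the gluing propositions in the proof of this theorem, though the dependence is not fully eliminated, since Corollary~\ref{zero}, and hence Lemma~\ref{le:constant}, are themselves proved using Theorem~\ref{cube}; what the paper's route buys is a direct verification that the slice satisfies the octant hypothesis, rather than an indirect one through the band characterization.

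There is, however, one genuine (though reparable) gap: the base of your forward induction. For $n=2$, regularity is not defined by (R1) and (R2) but by the explicit exclusion of the two diagonal pairs, and your verification scheme collapses there: (R2) is satisfied by every two-variable function (since $E_2(\psi)$ and $A_2(\psi)$ are univariate, hence automatically regular), and your (R1) argument fails because a Boolean square in two variables fixes no coordinate, so it is not a square of any $\psi_{i,c}$. In particular the non-regular ``diagonal'' functions pass your test at $n=2$. Since your inductive step at $n=3$ needs the $n=2$ forward statement to conclude that the restrictions $\psi_{i,c}$ are regular, the induction as written does not get off the ground. The case can be closed with your own dictionary: if $\{\psi=1\}$ were a diagonal pair at $p\notin U$, then $E_2(\psi)\equiv 1$, so by the $n=1$ case $p_1\in U'$; then by (S1) (valid here, since $U$ is semi-monotone in this direction) the fiber over $p_1$ is an interval avoiding $p_2$, hence lies entirely above or entirely below $p_2$, and openness of $U$ forces both octants on that side to meet $U$ --- but a diagonal pair contains only one octant on each side of $\{x_2=p_2\}$, a contradiction. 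A minor further point: regularity of $E_n(\psi)$ for regular $\psi$ comes from (R2) of Definition~\ref{regular} (or Theorem~\ref{commute}), not from Corollary~\ref{cor:commute}, which covers only permutations, negations, and restrictions.
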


\begin{proof}
Suppose that $U$ is semi-monotone and a point $p=(p_1, \ldots ,p_n) \in \Real^n \setminus U$.
Let $\psi$ be the function at $p$ for $U$, and let ${\mathcal Z}$ correspond to $\psi$.

According to Theorem~\ref{cube}, it is sufficient to prove that the union $C$ of all
unit cubes in $[-1,1]^n$ corresponding to octants from ${\mathcal Z}$ is a closed PL $n$-ball
different from the whole $[-1,1]^n$.
We prove this by induction on $n$ with the base case of $n=1$ being trivial.

Let $C=C_+ \cup C_-$ where $C_+$ (respectively, $C_-$) is the union of all unit cubes
corresponding to vertices of $[-1,1]^n$ with $\xi_n=1$ (respectively, $\xi_n=0$).

The projection $U'_+$ (respectively, $U'_-$) of $U \cap X_{n,>,p_n}$ (respectively,
of $U \cap X_{n,<,p_n}$) along the coordinate $x_n$ is semi-monotone due to
Proposition~\ref{le:projection}.
If $(p_1, \ldots ,p_{n-1}) \in U'_\pm$ then the projection of $C_\pm$ along $x_n$
coincides with $[-1,1]^{n-1}$.
Otherwise, by the inductive hypothesis, the projection of $C_\pm$ along $x_n$ is a
closed PL $(n-1)$-ball.
In any case, the set $C_\pm$ itself is a closed PL $n$-ball.

By Proposition~\ref{pr:cor3.16}, it is sufficient to prove that the intersection $C_0:=C_+ \cap C_-$
is a closed PL $(n-1)$-ball.
The same argument as in the proof of Theorem~\ref{cube} shows that $C_0$ is pure
$(n-1)$-dimensional.
We now prove that the set $C_0$ coincides with the union of the unit $(n-1)$-cubes for
$U \cap X_{n,=,p_n}$.
Indeed, if for two octants
$$D_+:=X_{1, \sigma_1, p_1} \cap \cdots \cap X_{n-1, \sigma_{n-1}, p_{n-1}} \cap X_{n, > ,p_n}$$
and
$$D_-:=X_{1, \sigma_1, p_1} \cap \cdots \cap X_{n-1, \sigma_{n-1}, p_{n-1}} \cap X_{n, < ,p_n},$$
where $\sigma_1, \ldots , \sigma_{n-1} \in \{ <,> \}$,
both intersections $D_+ \cap U$ and $D_- \cap U$ are non-empty, then $U \cap X_{n,=,p_n}$
is also non-empty since
$$U \cap X_{1, \sigma_1, p_1} \cap \cdots \cap X_{n-1, \sigma_{n-1}, p_{n-1}}$$
is semi-monotone and therefore connected.
By the inductive hypothesis, $C_0$ is a closed PL $(n-1)$-ball.

It remains to show that $C \neq [-1,1]^n$.
By the inductive hypothesis, $C_0 \neq [-1,1]^{n-1}$.
It follows that
$$U \cap X_{1, \sigma_1, p_1} \cap \cdots \cap X_{n-1, \sigma_{n-1}, p_{n-1}}
\cap X_{n,=,p_n} = \emptyset$$
for some $\sigma_1, \ldots , \sigma_{n-1} \in \{ <,> \}$.
If $C = [-1,1]^n$, then $U \cap D_+ \neq \emptyset$ and $U \cap D_- \neq \emptyset$.
Hence the semi-monotone set
$$U \cap X_{1, \sigma_1, p_1} \cap \cdots \cap X_{n-1, \sigma_{n-1}, p_{n-1}}$$
is not connected which is a contradiction.

Conversely, let for every $p \in\Real^n\setminus U$ the function $\psi$ at $p$ for $U$
be a non-constant regular function.
We continue the proof by induction on $n$, with the base for $n=1$ being trivial.

Let $U'$ be the projection of $U$ along the coordinate $x_n$.
For every point $p' \in \Real^{n-1} \setminus U'$ the function $\varphi$ at $p'$ for $U'$
coincides with $E_n(\psi)$ for the function $\psi$ at some point $p \in \Real^n \setminus U$
for $U$.
Then, by (R2) in Definition~\ref{regular}, $\varphi$ is regular.
The possibility that $\varphi \equiv 1$ contradicts to Lemma~\ref{le:constant}.
Then, by the inductive hypothesis, $U'$ is semi-monotone.
It follows that the intersection
$$V':= U' \cap X_{j_1,\sigma_1,c_1}\cap \cdots \cap X_{j_k,\sigma_k,c_k}$$
is connected for any $0\le k \le n-1$, any $1\le j_1< \dots <j_k \le n-1$, any
$\sigma_1,\dots,\sigma_k$ in $\{<, =, > \}$, and any $c_1, \dots, c_k$.
Suppose that the intersection
$$V:= U \cap X_{j_1,\sigma_1,c_1}\cap \cdots \cap X_{j_k,\sigma_k,c_k}$$
is not connected.
Then, by the Vietoris-Begle theorem, the fibre of the projection along the coordinate $x_n$
over some point $p'=(p_1, \ldots p_{n-1}) \in V'$
is not connected, i.e., for some $p_n,\ x_n,\ y_n \in \Real$ we have
$(p',x_n),\ (p',y_n) \in U$,  $(p',p_n) \in \Real^n \setminus U$, and $x_n < p_n <y_n$.
It follows that the function $\psi$ at $(p',p_{n-1})$ for $U$ is $\psi \equiv 1$, which is
a contradiction.

It remains to consider the case of the intersection $V \cap X_{n, \sigma_n, c_n}$
for $\sigma_n \in \{ < , = , >\}$.

Let $\sigma_n$ be $=$.
We prove that for a point $p \in \Real^{n-1} \setminus (U \cap X_{n, =, c_n})$,
if the intersection $U \cap X_{n, =, c_n} \neq \emptyset$, then the Boolean function
$\varphi$ at $p$ for $U \cap X_{n, =, c_n}$ is a non-constant regular function.
Since the function $\psi$ at $p$ for $U$ is non-constant regular, according to Theorem~\ref{cube},
the corresponding union $C_{\psi}$ of the unit cubes is a PL $n$-ball.
Then for the union $C_{\varphi}$ of unit $(n-1)$-cubes we have
$C_{\varphi}=C_{\psi +} \cap C_{\psi -}$, otherwise the intersection of $U$ with the cylinder
over the corresponding octant in $\Real^{n-1}$ centered at $p$ would be not connected, which
is a contradiction.
It follows that $C_{\varphi}$ is a PL $(n-1)$-ball, thus $\varphi$ is non-constant regular.
By the inductive hypothesis, $U \cap X_{n, =, c_n}$ is a semi-monotone set, in
particular every set $V \cap X_{n, =, c_n}$ is connected.

Suppose that some set of the kind $V \cap X_{n, \sigma_n, c_n}$, where $\sigma_n \in \{ < , > \}$
is not connected.
Since the set $V$ is connected, the set $V \cap X_{n, =, c_n}$
will be not connected which is a contradiction.
\end{proof}

\section{Appendix: topological background}\label{sec:appendix}

\begin{definition}\label{def:gluing}
Let $Z$ be a closed (open) PL $(n-1)$-ball, $X$, $Y$ be closed (respectively, open) PL $n$-balls,
and $$\overline Z= \overline X \cap \overline Y= \partial X \cap \partial Y.$$
We say that $X \cup Y \cup Z$ is obtained by {\em gluing} $X$ {\em and} $Y$ {\em along} $Z$.
\end{definition}

\begin{proposition}[\cite{RS}, Lemma~1.10]\label{pr:le1.10}
Let $X$ and $Y$ be closed PL $n$-balls and $h:\> \partial X \to \partial Y$ a PL homeomorphism.
Then $h$ extends to a PL homeomorphism $h_1:\> X \to Y$.
\end{proposition}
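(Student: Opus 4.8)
The statement is the classical PL Alexander (coning) trick, and I would prove it by reducing to a standard model and then coning radially. The plan is as follows. By definition a closed PL $n$-ball is a polyhedron PL homeomorphic to the standard $n$-simplex $\Delta^n$, so I may fix PL homeomorphisms $\phi\colon X \to \Delta^n$ and $\psi\colon Y \to \Delta^n$. Because the boundary of a PL manifold is intrinsic (a point is a boundary point precisely when its link is a ball rather than a sphere), every PL homeomorphism of balls carries boundary to boundary; hence $\phi(\partial X)=\partial\Delta^n=\psi(\partial Y)$, and $\phi,\psi$ restrict to PL homeomorphisms of the boundary spheres onto $\partial\Delta^n$. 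Setting
$$g := (\psi|_{\partial Y})\circ h \circ (\phi|_{\partial X})^{-1}\colon \partial\Delta^n \to \partial\Delta^n,$$
I obtain a PL self-homeomorphism of $\partial\Delta^n$, and it suffices to extend $g$ to a PL self-homeomorphism $G$ of $\Delta^n$: then $h_1 := \psi^{-1}\circ G \circ \phi$ is the desired extension, since on $\partial X$ one computes $h_1 = (\psi|_{\partial Y})^{-1}\circ g \circ (\phi|_{\partial X}) = h$.

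To extend $g$ I would use that $\Delta^n$ is the cone on $\partial\Delta^n$ with cone vertex its barycenter $v$, i.e. $\Delta^n = v \ast \partial\Delta^n$, which holds because $\Delta^n$ is convex and $v$ is interior. Choose a triangulation $T$ of $\partial\Delta^n$ so fine that $g$ is simplicial from $T$ onto some triangulation $T'$ of $\partial\Delta^n$; such $T,T'$ exist because $g$ is PL. Coning $T$ and $T'$ from $v$ produces triangulations $v\ast T$ and $v\ast T'$ of $\Delta^n$. Define $G$ radially on each cone ray by $G(v)=v$ and
$$G\big((1-s)v + s\,x\big) = (1-s)v + s\,g(x),\qquad x\in\partial\Delta^n,\; s\in[0,1].$$
Then $G$ is simplicial from $v\ast T$ to $v\ast T'$, hence PL; it is a homeomorphism, its inverse being the radial cone extension of $g^{-1}$; and $G|_{\partial\Delta^n}=g$ by construction. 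Transporting back through $\phi$ and $\psi$ completes the argument.

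The only genuinely nontrivial ingredients are the two facts invoked at the start: that a closed PL $n$-ball is PL homeomorphic to $\Delta^n$ (this is essentially the definition of a PL ball in the standard references, so it presents no obstacle once definitions are fixed) and the intrinsic PL-invariance of the boundary, which is what guarantees that $\phi$ and $\psi$ respect boundaries. The heart of the construction is the radial cone extension, and the main point to verify carefully is that this extension is genuinely PL rather than merely continuous. This is exactly why one first makes $g$ simplicial and only then cones the triangulations: coning a simplicial isomorphism of the base yields a simplicial isomorphism of the cones, so the PL property is automatic and no metric estimate is required.
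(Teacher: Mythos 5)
Your proof is correct. The paper gives no proof of Proposition~\ref{pr:le1.10} at all --- it is imported verbatim from \cite{RS}, Lemma~1.10 --- and your argument (reduce both balls to the standard simplex, then extend the boundary homeomorphism by coning a simplicial isomorphism from the barycenter, i.e.\ the classical Alexander trick) is essentially the standard proof given in that reference, so the approaches coincide.
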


\begin{proposition}[\cite{RS}, Corollary~$3.13_n$]\label{pr:cor3.13}
Let $X$ be a closed PL $n$-ball, $Y$ be a closed $(n+1)$-ball, $\partial Y$ be its boundary
(the PL $n$-sphere), and let $X \subset \partial Y$.
Then $\overline {\partial Y \setminus X}$ is a PL $n$-ball.
\end{proposition}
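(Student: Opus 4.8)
The plan is to prove the statement by induction on $n$, reducing it to the claim that the ball $X$ is \emph{unknotted} in the sphere $\partial Y$; that is, that the triad $(\partial Y;\,X,\,\overline{\partial Y\setminus X})$ is PL homeomorphic to the standard triad consisting of two hemispheres of an $n$-sphere meeting along an equatorial $(n-1)$-sphere. Once this is established, $\overline{\partial Y\setminus X}$ is PL homeomorphic to a hemisphere, hence a PL $n$-ball, and we are done. The base case $n=0$ is immediate: $\partial Y=S^0$ consists of two points, $X$ is one of them (a $0$-ball), and the closure of the complement is the other point, again a $0$-ball.

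For the inductive step I would first record the elementary topology of the decomposition. Writing $Z:=\overline{\partial Y\setminus X}$, the frontier of the codimension-zero ball $X$ in the $n$-sphere $\partial Y$ is $\partial X$, an $(n-1)$-sphere, and $\partial Y=X\cup Z$ with $X\cap Z=\partial X=\partial Z$. Thus it suffices to show that $Z$ itself is a PL $n$-ball; Proposition~\ref{pr:le1.10} (together with Definition~\ref{def:gluing}) then recovers the standard decomposition of $\partial Y$, matching $X$ and $Z$ with the two hemispheres along $\partial X$. The essential geometric input is the cone structure of the filling ball: since $Y$ is a PL $(n+1)$-ball, it is a cone $v\ast\partial Y$ over its boundary sphere, and under this identification $v\ast X$ is a cone on a ball, hence a PL $(n+1)$-ball, while a direct computation in cone coordinates gives $\overline{Y\setminus(v\ast X)}=v\ast Z$. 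The hypothesis that $X$ is filled by the ball $Y$ is precisely what will force $X$ to sit standardly in $\partial Y$.

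The main step, and the principal obstacle, is to extract the unknottedness of $X$ from this filling. I would triangulate $Y$ so that $X$ is a full subcomplex of $\partial Y$, pass to a regular (second derived) neighbourhood $N$ of $X$ in $\partial Y$, and use that $X$ is collapsible, being a PL ball, to conclude by the regular neighbourhood theorem of \cite{RS} that $N$ is a PL $n$-ball with $N$ PL homeomorphic to $X$ rel $\partial X$. The real difficulty is then to show that the complementary region $Z$ is likewise a regular neighbourhood of a collapsible subpolyhedron of $\partial Y$: this is where the cone $v\ast Z\subset Y$ and the inductive hypothesis (the statement one dimension lower, applied to the sphere $\partial X$) are used to prove that $Z$ collapses. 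A collapsible regular neighbourhood inside the closed manifold $\partial Y$ is a PL $n$-ball, which yields the conclusion.

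I expect this final collapsibility/uniqueness argument to be the genuinely hard part, since it is essentially the content of Newman's theorem that the complement of a codimension-zero PL ball in a PL sphere is again a ball. Unlike the topological Schoenflies theorem, the PL version holds in all dimensions, but its proof is not formal: it rests on the uniqueness of regular neighbourhoods, which in \cite{RS} is itself proved by an induction intertwined with the present statement. Consequently I would organize the whole argument as a simultaneous induction on $n$, carrying along the ball-complement statement, the uniqueness of regular neighbourhoods, and the standardness of a ball in the boundary sphere of a ball, so that each can feed the others in dimension $n-1$.
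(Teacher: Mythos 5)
The paper does not prove this proposition at all: it appears in the Appendix as imported background, with the proof deferred entirely to \cite{RS}, Corollary~$3.13_n$. This is Newman's theorem that the closure of the complement of an $n$-ball in an $n$-sphere is again an $n$-ball (stated here in the form where the sphere arises as $\partial Y$ for a ball $Y$). So there is no argument of the paper to compare yours against; the only question is whether your sketch stands on its own as a proof, and it does not --- indeed you essentially concede this in your last paragraph.

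The preliminary reductions you make are fine: the frontier of $X$ in $\partial Y$ is $\partial X$; writing $Z:=\overline{\partial Y\setminus X}$, it suffices to show $Z$ is a PL $n$-ball (the ``standard triad'' formulation then follows from Proposition~\ref{pr:le1.10}); and the cone identifications $Y=v\ast\partial Y$, $v\ast X$ a ball, $\overline{Y\setminus(v\ast X)}=v\ast Z$ are correct. The gap is the main step. Your treatment of $X$ via a regular neighbourhood $N$ adds nothing ($X$ is already a ball by hypothesis, and for a codimension-zero submanifold $N$ is just $X$ with a boundary collar); all of the content lies in the claim that $Z$ is a regular neighbourhood of a collapsible subpolyhedron of $\partial Y$, and for that you give no argument beyond invoking ``the cone $v\ast Z\subset Y$ and the inductive hypothesis applied to $\partial X$.'' Knowing, one dimension down, that complements of balls in spheres are balls does not by itself produce a collapse of $Z$ or exhibit a collapsible spine for it; that deduction is exactly what Rourke--Sanderson's machinery (uniqueness of regular neighbourhoods, regular neighbourhoods of collapsible polyhedra are balls, the cone and collaring lemmas) is for, and in \cite{RS} those tools are themselves established only inside the same grand induction in which $3.13_n$ is one of several interlocked statements. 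Your closing proposal to run such a simultaneous induction is the right description of the known proof, but you neither specify the exact list of statements carried along, nor the order in which they imply one another within a fixed dimension, nor carry out any single implication. As it stands, the proposal correctly identifies the theorem and sketches a roadmap to the literature's proof; the genuinely hard content --- collapsibility of the complementary region, equivalently the regular-neighbourhood uniqueness it rests on --- is named but not proven.
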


\begin{proposition}[\cite{RS}, Corollary~3.16]\label{pr:cor3.16}
Let $X$, $Y$, $Z$ be  closed PL balls, as in Definition~\ref{def:gluing},
and $X \cup Y$ be obtained by gluing $X$ and $Y$ along $Z$.
Then $X \cup Y$ is a closed PL $n$-ball.
\end{proposition}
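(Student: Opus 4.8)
The plan is to build an explicit PL homeomorphism from $X \cup Y$ onto a standard model ball. Take $[-1,1]^n = B_+ \cup B_-$ with $B_+ = [0,1]\times[-1,1]^{n-1}$, $B_- = [-1,0]\times[-1,1]^{n-1}$, and $B_0 = \{0\}\times[-1,1]^{n-1} = \partial B_+ \cap \partial B_-$, so that $B_+, B_-$ are closed PL $n$-balls glued along the PL $(n-1)$-ball $B_0$. Since $X \cap Y = Z = \partial X \cap \partial Y$ puts $X, Y, Z$ in exactly the roles of $B_+, B_-, B_0$, it suffices to produce PL homeomorphisms $\alpha \colon X \to B_+$ and $\beta \colon Y \to B_-$ with $\alpha(Z) = \beta(Z) = B_0$ and $\alpha|_Z = \beta|_Z$; the pasting lemma then combines them into a PL bijection $h \colon X \cup Y \to [-1,1]^n$ that is a homeomorphism (a continuous bijection of compacta), proving $X \cup Y$ is a closed PL $n$-ball.

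First I would place $Z$ in standard position inside $\partial X$. The boundary $\partial X$ is a PL $(n-1)$-sphere containing the PL $(n-1)$-ball $Z$, and by Proposition~\ref{pr:cor3.13} the complement $\overline{\partial X \setminus Z}$ is again a PL $(n-1)$-ball, meeting $Z$ along the common boundary $(n-2)$-sphere $\partial Z$. Choosing any PL homeomorphism $Z \to B_0$, restricting it to $\partial Z \to \partial B_0$, and extending that over the complementary balls via Proposition~\ref{pr:le1.10}, I obtain a PL homeomorphism $\partial X \to \partial B_+$ carrying $Z$ onto $B_0$; a second application of Proposition~\ref{pr:le1.10} extends it to $\alpha \colon X \to B_+$ with $\alpha(Z) = B_0$. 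The identical construction yields some $\beta_0 \colon Y \to B_-$ with $\beta_0(Z) = B_0$.

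The one genuinely delicate point is that $\alpha$ and $\beta_0$ need not agree on $Z$. To repair this, set $g := (\alpha|_Z)\circ(\beta_0|_Z)^{-1}$, a PL self-homeomorphism of $B_0$. Since $B_0 \subset \partial B_-$, I can extend $g|_{\partial B_0}$ over the complementary $(n-1)$-ball $\overline{\partial B_- \setminus B_0}$ and then over $B_-$, each time by Proposition~\ref{pr:le1.10}, producing a PL self-homeomorphism $G$ of $B_-$ with $G|_{B_0} = g$. Then $\beta := G \circ \beta_0$ is a PL homeomorphism $Y \to B_-$ with $\beta(Z) = B_0$ and $\beta|_Z = \alpha|_Z$, which is exactly the compatibility needed to glue.

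The hard part will not be any one calculation but the bookkeeping around the matching condition $\alpha|_Z = \beta|_Z$, namely the adjustment of $\beta_0$ by the self-homeomorphism $G$. All the genuinely topological content is already packaged in the cited results: that a PL ball removed from its ambient boundary sphere leaves a PL ball (Proposition~\ref{pr:cor3.13}), and that boundary homeomorphisms of PL balls extend inward (Proposition~\ref{pr:le1.10}). Granting those, the remaining task is to arrange compatibility of the two pieces on $Z$ and invoke the pasting lemma.
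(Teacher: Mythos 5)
Your proof is correct, but be aware that the paper itself offers no argument to compare against: Proposition~\ref{pr:cor3.16} is quoted from Rourke--Sanderson \cite{RS} (Corollary~3.16) and used as a black box, exactly like Propositions~\ref{pr:le1.10} and~\ref{pr:cor3.13}. What you have done is reconstruct the quoted result from the other two quoted facts, and the reconstruction is sound: Proposition~\ref{pr:cor3.13} gives that $\overline{\partial X \setminus Z}$ and $\overline{\partial Y \setminus Z}$ are closed PL $(n-1)$-balls with boundary $\partial Z$, Proposition~\ref{pr:le1.10} lets you assemble boundary homeomorphisms piece by piece and then extend them inward, and your correction of $\beta_0$ by the self-homeomorphism $G$ correctly disposes of the matching problem on $Z$, which is indeed the only delicate point. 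The facts you leave implicit --- that the frontier of $Z$ in the sphere $\partial X$ is exactly the manifold boundary $\partial Z$, that the union of the remaining faces of the box $\overline{\partial B_+ \setminus B_0}$ is itself a PL $(n-1)$-ball, and that two PL homeomorphisms agreeing on the intersection of two closed subpolyhedra paste to a PL homeomorphism of the union --- are standard and harmless. The difference in route is thus one of economy versus self-containment: the paper defers to \cite{RS}, where Corollary~3.16 lives inside the regular-neighbourhood machinery of that book's Chapter~3 (the same machinery that produces Corollary~3.13 in the first place), whereas your argument exhibits the gluing statement as a purely formal consequence of the two results the paper already quotes, with no further input from that theory. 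That is a genuinely useful observation: anyone prepared to grant Propositions~\ref{pr:le1.10} and~\ref{pr:cor3.13} gets Proposition~\ref{pr:cor3.16} for free.
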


\begin{proposition}[\cite{Shiota}, Lemma~I.3.8]\label{pr:shiota}
Let $X, Y \subset \Real^n$ be compact polyhedra such that $X$ and $X \cup Y$ are
closed PL $n$-balls.
Let $X \cap Y$ be a closed PL $(n-1)$-ball contained in $\partial X$, and let the interior
of $X \cap Y$ be contained in the interior of $X \cup Y$.
Then $Y$ is a closed PL $n$-ball.
\end{proposition}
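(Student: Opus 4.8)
The plan is to realize $Y$ as the closed complement of $X$ inside the ball $B := X \cup Y$, to locate $X$ relative to $\partial B$ using Proposition~\ref{pr:cor3.13}, and then to recognize $X$ as a regular neighbourhood of a face of $\partial B$, so that $Y$ is the complementary ball. Throughout write $Z := X \cap Y$.

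First I would check that $Y = \overline{B \setminus X}$. Since $Z = X \cap Y \subset \partial X$, no point of $X \cap Y$ is interior to $X$; thus $\mathrm{int}(X) \cap Y = \emptyset$, and as $B = X \cup Y$ with $Y$ closed this forces $B \setminus X \subset Y$ and hence $Y = \overline{B \setminus X}$. The same observation gives $\partial X \cap \partial Y = X \cap Y = Z$, so the situation is precisely a gluing $B = X \cup Y$ along $Z$ in the sense of Definition~\ref{def:gluing}, now to be read in reverse.

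Next comes the positional analysis, which is where I would extract the geometry. Set $W := \overline{\partial X \setminus Z}$. As $Z$ is a PL $(n-1)$-ball inside the PL $(n-1)$-sphere $\partial X$, Proposition~\ref{pr:cor3.13} shows $W$ is a PL $(n-1)$-ball meeting $Z$ in the common boundary $(n-2)$-sphere $\partial Z = \partial W$. A short point-set argument — using the hypotheses $\mathrm{int}(X) \subset \mathrm{int}(B)$ and $\mathrm{int}(Z) \subset \mathrm{int}(B)$ — then identifies $W$ as exactly $X \cap \partial B$, so that $W \subset \partial B$ and $Z = \overline{\partial X \setminus W}$ is a \emph{proper} PL $(n-1)$-ball in $B$ (its interior lies in $\mathrm{int}(B)$ and its boundary $\partial Z$ lies in $\partial B$). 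Applying Proposition~\ref{pr:cor3.13} once more, to the ball $W$ inside the sphere $\partial B$, yields a PL $(n-1)$-ball $V := \overline{\partial B \setminus W}$; tracing which points of $B$ are boundary points of $Y$ gives $\partial Y = Z \cup V$ with $Z \cap V = \partial Z = \partial V$, so $\partial Y$ is already seen to be a PL $(n-1)$-sphere.

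The decisive step is to upgrade this to ``$Y$ is a ball'', and I expect it to be the main obstacle, since a spherical boundary alone does not force a ball. The route I would take avoids any Schoenflies-type input by arguing combinatorially: $X$ is an $n$-ball that is a manifold neighbourhood in $B$ of the face $W$, and $X$ collapses to $W$ by collapsing across its free face $Z$ (a ball with $Z \cap W = \partial Z$). Hence $X$ is a regular neighbourhood of $W$ in $B$. By the uniqueness of regular neighbourhoods (\cite{RS}), the pair $(B,X)$ is PL homeomorphic to the standard model in which $B$ is a ball, $W$ a face of $\partial B$, and $X$ a collar of $W$; in that model the complementary set is manifestly a PL $n$-ball. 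Transporting back, $Y = \overline{B \setminus X}$ is a PL $n$-ball. The crux — the place where genuine PL topology beyond the elementary gluing results (Propositions~\ref{pr:le1.10}, \ref{pr:cor3.16}) enters — is precisely this identification of $X$ as a regular neighbourhood together with the uniqueness theorem; it is combinatorial and dimension-independent, which is exactly what allows the conclusion to hold for every $n$.
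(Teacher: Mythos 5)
First, a point of reference: the paper offers no proof of Proposition~\ref{pr:shiota} at all; it is imported verbatim from \cite{Shiota} (Lemma~I.3.8), so your argument must stand on its own. It does not: the step you yourself flag as decisive contains a genuine error. Your preliminary analysis is fine --- $Y=\overline{B\setminus X}$, the set $W:=\overline{\partial X\setminus Z}$ is a PL $(n-1)$-ball equal to $X\cap\partial B$, the frontier of $Y$ is the sphere $Z\cup V$ --- and so is the collapse $X\searrow W$, which is indeed a single elementary polyhedral collapse because $W$ is a face of the ball $X$. But $X$ is \emph{not} a regular neighbourhood of $W$ in $B$, for the simple reason that it is not a topological neighbourhood of $W$ in $B$: every point of $\partial W=\partial Z$ lies in $Z\subset Y=\overline{B\setminus X}$ and is therefore a limit of points of $B\setminus X$. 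Equivalently, any genuine neighbourhood $N$ of $W$ in $B$ has the property that $N\cap\partial B$ is a neighbourhood of $W$ in $\partial B$, hence strictly larger than $W$, while $X\cap\partial B=W$ exactly. Already in the model $B=[0,1]^n$, $X=B\cap\{x_n\le 1/2\}$ (so that $W$ is the part of $\partial B$ with $x_n\le 1/2$), the points of $\partial B$ just above the equator $\partial Z$ witness this failure. Note also that your intended standard model, the collar of $W$, fails to be a neighbourhood of $W$ for exactly the same reason; so the uniqueness theorem of \cite{RS} that you invoke cannot be applied to either side of the comparison, let alone to both.

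This is not a fatal objection to the strategy, only to its execution; the repair, however, is precisely where the substantive PL topology lives. The correct move is to change the core: fix $p\in W\setminus\partial W$ and check (i) $X$ is a compact $n$-manifold neighbourhood of $p$ in $B$ (your own point-set argument gives this: near such $p$ one has $B\cap N_p=X\cap N_p$ for a small neighbourhood $N_p$, since $p\notin Y$ and $Y$ is compact); (ii) $X\searrow W\searrow p$, hence $X\searrow p$; (iii) $X\cap\partial B=W$ is a regular neighbourhood of $p$ in $\partial B$. These are the hypotheses of the simplicial neighbourhood theorem in its boundary form (regular neighbourhoods ``meeting $\partial B$ regularly''), which is the version of the theory actually required when the core touches $\partial B$; it identifies $X$ as a regular neighbourhood of $p$ in $B$, and the uniqueness theorem for that relative notion then carries $(B,X)$ onto the standard half-ball pair, a half-ball at $p$ being another such neighbourhood. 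Even then, the final assertion that the complement of the half-ball is ``manifestly'' a ball deserves an argument (collar shrinking plus the gluing statement, Proposition~\ref{pr:cor3.16}). In short: the dimension-independent, Schoenflies-free approach is the right one, but the regular-neighbourhood claim at its heart is false as stated, and fixing it requires the boundary-regular (relative) form of regular neighbourhood theory that your write-up never engages.
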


\begin{definition}\label{def:shellable}
An $n$-dimensional {\em shellable} cell complex is defined by induction as follows.
\begin{enumerate}
\item
Any PL regular $n$-cell $A$ is a shellable complex.
\item
If $W$ an $n$-dimensional shellable complex, $B$ is a PL regular $n$-cell, and
$C$ is a PL regular $(n-1)$-cell in the boundaries of both $W$ and $B$, then
the result of gluing $W$ and $B$ along $C$ is a shellable complex.
\end{enumerate}
\end{definition}

\begin{proposition}\label{pr:shellable}
Any $n$-dimensional shellable cell complex is a PL regular $n$-cell.
\end{proposition}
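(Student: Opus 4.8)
The plan is to induct on the number of gluing operations used to build the shellable complex according to Definition~\ref{def:shellable}, equivalently on the number of top-dimensional cells in the shelling. The base case is clause~(1) of Definition~\ref{def:shellable}: a single PL regular $n$-cell is by definition a PL regular $n$-cell, so there is nothing to prove. For the inductive step, I would write the complex as $W \cup B$, obtained by gluing a PL regular $n$-cell $B$ to a shellable complex $W$ along a PL regular $(n-1)$-cell $C$ lying in $\partial W \cap \partial B$, where by the inductive hypothesis $W$ is already a PL regular $n$-cell.

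By the PL form of Definition~\ref{def:cell}, the fact that $W$ and $B$ are regular $n$-cells and $C$ is a regular $(n-1)$-cell means that $\overline W$ and $\overline B$ are closed PL $n$-balls, $\overline C$ is a closed PL $(n-1)$-ball, and the gluing hypothesis of Definition~\ref{def:gluing} gives $\overline C = \overline W \cap \overline B = \partial W \cap \partial B$. I would then invoke Proposition~\ref{pr:cor3.16} directly to conclude that $\overline{W \cup B}$ is a closed PL $n$-ball. This settles half of the characterization of a regular cell recalled just after Proposition~\ref{pr:le1.10}.

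It remains to show that the frontier of $W \cup B$ is a PL $(n-1)$-sphere, and here I would follow the frontier computation in the proof of Corollary~\ref{union}. Since $C$ is a closed PL $(n-1)$-ball contained in the $(n-1)$-sphere $\partial W$, Proposition~\ref{pr:cor3.13} shows that $\overline{\partial W \setminus C}$ is a closed PL $(n-1)$-ball; likewise $\overline{\partial B \setminus C}$ is a closed PL $(n-1)$-ball. The relative interior of $C$ becomes interior to $W \cup B$, so the frontier of $W \cup B$ is exactly $\overline{\partial W \setminus C} \cup \overline{\partial B \setminus C}$, and these two $(n-1)$-balls meet precisely along their common boundary $\partial C$, an $(n-2)$-sphere. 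Gluing two $(n-1)$-balls along a common boundary $(n-2)$-sphere yields an $(n-1)$-sphere by Proposition~\ref{pr:le1.10}. Thus $\overline{W \cup B}$ is a closed PL $n$-ball whose frontier is an $(n-1)$-sphere, which by the characterization following Proposition~\ref{pr:le1.10} makes $W \cup B$ a PL regular $n$-cell, completing the induction.

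I expect the main obstacle to be the bookkeeping in the frontier step: verifying that the relative interior of the gluing cell $C$ genuinely passes to the interior of $W \cup B$ (so that it is removed from the frontier) and that the two leftover boundary balls $\overline{\partial W \setminus C}$ and $\overline{\partial B \setminus C}$ intersect in exactly $\partial C$ and nothing more. This is the same delicate point already handled in Corollary~\ref{union} and Lemma~\ref{complement}, and once it is checked the PL propositions from the Appendix do the rest mechanically.
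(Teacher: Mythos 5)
Your proof is correct and takes essentially the same route as the paper: the paper's own proof is the one-line remark that the statement ``follows from Proposition~\ref{pr:cor3.16} by the induction in Definition~\ref{def:shellable}'', which is precisely your induction on the gluing structure. Your extra verification that the frontier is a PL $(n-1)$-sphere (via Propositions~\ref{pr:cor3.13} and~\ref{pr:le1.10}, mirroring Corollary~\ref{union}) simply makes explicit what the paper's terse proof leaves implicit.
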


\begin{proof}
Follows from Proposition~\ref{pr:cor3.16} by the induction in Definition~\ref{def:shellable}.
\end{proof}

\begin{definition}[\cite{RS}, Ch.~4]
A pair of PL manifolds $(Q^m, Q^n)$, in particular balls or spheres, is {\em proper} if
$Q^n \cap \partial Q^m= \partial Q^n$.
A proper pair is {\em locally flat} if each point $\x \in Q^n$ has a neighbourhood in
$(Q^m,Q^n)$ homeomorphic (as a pair) to an open set in
$(\Real_{+}^{m}, \Real_{+}^{n} \times 0)$.
(It is clear that then the pair $(\partial Q^m, \partial Q^n)$ is also locally flat.)
The {\em standard ball pair} is $([-1,1]^m, [-1,1]^n \times 0)$, and
$(\partial [-1,1]^m, \partial [-1,1]^n \times 0)$ is the {\em standard sphere pair}.
A ball or a sphere pair is {\em unknotted} if it is PL homeomorphic to the appropriate standard pair
of the appropriate dimension.
\end{definition}

\begin{proposition}[Sch\"{o}nflies theorem, \cite{RS}, 3.37]\label{pr:schon}
If $n \neq 4$ then any locally flat pair of PL spheres $(S^n,S^{n-1})$ is unknotted.
\end{proposition}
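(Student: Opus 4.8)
This is the classical codimension-one PL unknotting (Schönflies) theorem, and in the paper it is only invoked, not reproved; were I to reconstruct it, the plan would be to split $S^n$ along $S^{n-1}$ into its two complementary PL manifolds, recognise each as a PL $n$-ball, and then assemble the unknotting homeomorphism from homeomorphisms of these balls via Proposition~\ref{pr:le1.10}. First I would use the local flatness of the pair to build a bicollar, i.e. a PL embedding of $S^{n-1}\times[-1,1]$ onto a neighbourhood of $S^{n-1}$ carrying $S^{n-1}\times\{0\}$ onto $S^{n-1}$; this is where the hypotheses of properness and local flatness are consumed. By Alexander duality $S^{n-1}$ separates $S^n$ into two components; denoting by $A$ and $B$ the closures of these components, one has $A\cup B=S^n$, $A\cap B=S^{n-1}$, and $A,B$ are compact PL $n$-manifolds with $\partial A=\partial B=S^{n-1}$.

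Next I would determine the homotopy type of $A$ and $B$. Feeding the decomposition $S^n=A\cup B$ (with intersection $S^{n-1}$) into the Mayer--Vietoris sequence, together with Alexander duality, shows that $A$ and $B$ are acyclic; and for $n\ge 3$ the bounding sphere $S^{n-1}$ is simply connected, so van Kampen's theorem gives $\pi_1(A)=\pi_1(B)=1$. Hence $A$ and $B$ are contractible PL $n$-manifolds each bounded by a PL $(n-1)$-sphere.

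The decisive step is to promote ``contractible PL $n$-manifold with PL-sphere boundary'' to ``PL $n$-ball''. For this I would delete a small interior PL $n$-ball $D\subset A$; the complement $W=\overline{A\setminus D}$ is a simply connected PL cobordism between $\partial A$ and $\partial D$, and since $A$ is contractible and $D$ is a ball both boundary inclusions into $W$ are homotopy equivalences, so $W$ is an $h$-cobordism of vanishing Whitehead torsion. For $n\ge 6$ the $h$-cobordism theorem yields $W\cong S^{n-1}\times[0,1]$, so that $A=W\cup_{\partial D}D$ is a PL $n$-ball; the cases $n\le 3$ are classical (Alexander's theorem for $n=3$), and $n=5$ follows from the five-dimensional PL Poincar\'e theorem. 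This is exactly where the hypothesis $n\ne 4$ enters: the four-dimensional $h$-cobordism / Poincar\'e statement that the argument requires is not available in the PL category, and the proof genuinely breaks there.

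Finally, with $A$ and $B$ both PL $n$-balls glued along their common boundary sphere, I would fix a PL homeomorphism from $S^{n-1}$ to the equator of the standard sphere and extend it across $A$ and across $B$ onto the two hemispheres by Proposition~\ref{pr:le1.10}; the two extensions agree on $S^{n-1}$ and so combine into a PL homeomorphism of pairs $(S^n,S^{n-1})\to(\partial[-1,1]^{n+1},\,\partial([-1,1]^n)\times 0)$, exhibiting the pair as unknotted. I expect the ball-recognition step to be the only real obstacle; it rests squarely on the generalized Poincar\'e conjecture, which is precisely why dimension four must be excluded.
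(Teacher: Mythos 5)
The paper does not prove this proposition at all: it is imported verbatim from Rourke--Sanderson as item 3.37 of \cite{RS}, placed in the Appendix as background, and then used as a black box (notably twice inside the proof of Proposition~\ref{pr:balls}). So there is no proof of the paper's to compare yours against; what you have written is a reconstruction of the standard argument behind the cited result, and it is essentially correct. Splitting $S^n$ along the bicollared $S^{n-1}$ into two compact PL manifolds $A$ and $B$ (local flatness of a two-sided codimension-one PL submanifold does give a bicollar), showing $A$ and $B$ are acyclic by Alexander duality/Mayer--Vietoris and simply connected by van Kampen, recognizing them as balls via the simply connected $h$-cobordism theorem when $n\ge 6$ (where $Wh(1)=0$ makes the torsion condition vacuous), and assembling the pair homeomorphism by the Alexander trick, i.e.\ Proposition~\ref{pr:le1.10}, is exactly the standard route, and dimension four is excluded for precisely the reason you state. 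The one step you state too quickly is $n=5$: the $h$-cobordism theorem is unavailable (your cobordism $W$ is then $5$-dimensional), and the five-dimensional PL Poincar\'e theorem does not apply verbatim to the bounded manifold $A$. The usual repair is to cap $A$ off with the cone on its boundary sphere, obtaining a closed PL homotopy $5$-sphere, identify it with $S^5$ by the PL Poincar\'e theorem in dimension $5$, and then recognize $A$ as the closed complement of a PL ball in $S^5$, which is a PL ball by the complementary-ball theorem (Proposition~\ref{pr:cor3.13}). With that detail filled in, your outline is a faithful account of why the cited statement holds for all $n\ne 4$.
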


The following statement is apparently well-known in PL-topology.
However, since we don't have a good reference, we present a proof communicated to us by
N.~Mnev.

\begin{proposition}\label{pr:balls}
If $n \neq 4,5$, then any locally flat pair of closed PL balls $(B^n,B^{n-1})$ is unknotted.
\end{proposition}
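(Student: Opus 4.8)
The plan is to deduce the ball-pair statement from the sphere-pair Sch\"onflies theorem (Proposition~\ref{pr:schon}) applied in two consecutive dimensions, the two applications accounting respectively for the excluded values $n=5$ and $n=4$. Write $(B^n,B^{n-1})$ for the given locally flat proper pair of closed PL balls. First I would pass to the boundary pair $(\partial B^n,\partial B^{n-1})=(S^{n-1},S^{n-2})$, which is a locally flat pair of PL spheres (this is exactly the parenthetical remark in the definition of a locally flat pair). Since $n\neq 5$ we have $n-1\neq 4$, so Proposition~\ref{pr:schon} applies in dimension $n-1$ and shows that the boundary pair is unknotted; thus there is a PL homeomorphism $g\colon(\partial B^n,\partial B^{n-1})\to(\partial[-1,1]^n,\partial([-1,1]^{n-1}\times 0))$ onto the standard sphere pair.

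Next I would form the double of the pair along its boundary, obtaining a pair $(W,V)$ with $W=B^n\cup_{\partial B^n}B^n$ and $V=B^{n-1}\cup_{\partial B^{n-1}}B^{n-1}$; here $W$ is a PL $n$-sphere and $V$ a PL $(n-1)$-sphere, and the reflection involution $\tau$ interchanging the two copies has fixed-point set the ``equator'' $\Sigma:=\partial B^n$, with $\Sigma\cap V=\partial B^{n-1}$. The pair $(W,V)$ is again locally flat: at interior points of $B^{n-1}$ this is the hypothesis, while at the seam $\partial B^{n-1}$ it follows by doubling the half-space model provided by local flatness of $(B^n,B^{n-1})$ along $\partial B^{n-1}$. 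Since $n\neq 4$, a second application of Proposition~\ref{pr:schon}, now in dimension $n$, shows that the double $(W,V)$ is unknotted.

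Finally I would recover $(B^n,B^{n-1})$ from its double as the closure of one component of $W\setminus\Sigma$, together with $B^{n-1}=V\cap B^n$, and standardize it by coning. Every closed PL ball is PL homeomorphic to the cone on its boundary, and the standard target $([-1,1]^n,[-1,1]^{n-1}\times 0)$ is likewise the cone on the standard boundary sphere pair; by Proposition~\ref{pr:le1.10} the boundary homeomorphism $g$ extends to a PL homeomorphism of the ambient balls. The content of the argument is to perform this coning \emph{for the pair simultaneously}, so that the extension carries the properly embedded ball $B^{n-1}$ to the standard spanning disc $[-1,1]^{n-1}\times 0$.

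I expect this last step to be the main obstacle. The extension of $g$ furnished by Proposition~\ref{pr:le1.10} need not respect the subpair a priori, and the role of the unknottedness of the full double $(W,V)$ is precisely to guarantee that the two-sided, locally flat ball $B^{n-1}$ sits inside $B^n$ as the standard disc rather than in a knotted way: the standardized double exhibits $V$ as an unknotted equatorial sphere of $W$ meeting the separating sphere $\Sigma$ in the unknotted $S^{n-2}$, and cutting this standard picture along $\Sigma$ returns the standard ball pair. Making the two standardizations (of $\Sigma$ inside $W$ and of $V$ inside $W$) coherent, so that the cut yields a homeomorphism of the pair, is the delicate point; once it is arranged, the resulting map $(B^n,B^{n-1})\to([-1,1]^n,[-1,1]^{n-1}\times 0)$ proves that the pair is unknotted.
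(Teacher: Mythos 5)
Your overall skeleton matches the paper's: both proofs apply the Sch\"onflies theorem (Proposition~\ref{pr:schon}) twice, once to the boundary sphere pair $(S^{n-1},S^{n-2})$ (using $n\neq 5$) and once to a capped-off ambient sphere pair (using $n\neq 4$). But the step you yourself flag as ``the main obstacle'' is a genuine gap, and with your choice of capping construction it cannot be closed by the tools at hand. The problem is that doubling makes the argument circular: after you standardize the double $(W,V)$, the image of one copy of $B^n$ is a PL ball $B$ in the standard sphere, and what you must show is that the ball pair $(B,\,V\cap B)$ is unknotted --- which is exactly the statement of Proposition~\ref{pr:balls} that you set out to prove. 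To extract it from the standardized double you would need the ambient homeomorphism to respect the separating sphere $\Sigma=\partial B^n$ as well, i.e.\ an unknotting theorem for the triple $(W,V,\Sigma)$, which neither Proposition~\ref{pr:schon} nor anything else quoted in the paper provides. Equivalently: cutting $W$ along $V$ expresses each closed complementary piece $W_\pm$ as a union of $B^n_\pm$ and its mirror copy glued along $S^{n-1}_\pm$; both pieces of this decomposition are copies of the \emph{unknown} set, so no subtraction lemma applies.

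The paper's proof differs at precisely this point, and the difference is what makes it work. Instead of doubling, it caps $B^n$ with the cone $(C^n,D^{n-1})$ over the boundary pair $(S^{n-1},S^{n-2})$. After the second application of Proposition~\ref{pr:schon}, each closed complementary piece of $V^{n-1}=B^{n-1}\cup D^{n-1}$ in $W^n=B^n\cup C^n$ is a PL $n$-ball, and it decomposes as $B^n_\pm\cup E^n_\pm$, where $E^n_\pm$ is the cone over $S^{n-1}_\pm$. Here the first Sch\"onflies application pays off: $S^{n-1}_\pm$ is a known PL ball, hence so is the cone $E^n_\pm$. Now Shiota's lemma (Proposition~\ref{pr:shiota}), applied with $X=E^n_\pm$, $Y=B^n_\pm$, and $X\cup Y=$ the complementary piece, strips off the cone and shows that $B^n_\pm$ are PL $n$-balls --- the cone provides the one \emph{known} ball that a subtraction argument requires, which is exactly what the double lacks. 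Once $B^n_+$, $B^n_-$ and $B^{n-1}=B^n_+\cap B^n_-$ are all known to be balls, the pair homeomorphism to $([-1,1]^n,[-1,1]^{n-1}\times 0)$ is assembled from the inside out by boundary-extension (Proposition~\ref{pr:le1.10}): standardize $B^{n-1}$ first, extend over $\partial B^n_\pm$, then over $B^n_\pm$. This avoids ever having to make an ambient unknotting homeomorphism compatible with a second submanifold, which is the coherence problem your version runs into.
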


\begin{proof}
Let $S^{n-1}$ be the sphere boundary of $B^n$, and $S^{n-2}$ be the sphere boundary of $B^{n-1}$.
Let $B_{+}^{n}$ and $B_{-}^{n}$ be the two parts of $B^n$ separated by $B^{n-1}$, and let
$S_{+}^{n-1}$ and $S_{-}^{n-1}$ be the corresponding parts of $S^{n-1}$.
For $n \neq 5$, Proposition~\ref{pr:schon} implies that the pair of PL spheres
$(S^{n-1}, S^{n-2})$ is unknotted, and $S_{\pm}^{n-1}$ are PL $n$-balls.

Let $(C^n,D^{n-1})$ be the cone pair with the base $(S^{n-1}, S^{n-2})$.
Then $W^n:= B^n \cup C^n$ is a PL $n$-sphere, $V^{n-1}:= B^{n-1} \cup D^{n-1}$
is a PL $(n-1)$-sphere, and the pair $(W^n,V^{n-1})$ is locally flat.
For $n \neq 4$, Proposition~\ref{pr:schon}
implies that the pair of PL spheres $(W^n, V^{n-1})$ is unknotted, and the two parts of
$W^n$ separated by $V^{n-1}$ are PL $n$-balls.
But these two parts are the unions $B_{\pm}^{n} \cup E_{\pm}^{n}$, where $E_{+}^{n}$ and
$E_{-}^{n}$ are cones over $S_{+}^{n-1}$ and $S_{-}^{n-1}$ respectively.
Since $S_{\pm}^{n-1}$ are PL $n$-balls, Proposition~\ref{pr:shiota} implies that
the sets $B_{\pm}^{n}$ are also PL $n$-balls.
\end{proof}

\begin{remark}
Proposition~\ref{pr:balls} is also true in the case $n=5$ but available proofs are more complex,
and we don't need this case here.
\end{remark}

\begin{proposition}\label{pr:acyclic}
For $n \le 4$ any acyclic simplicial subcomplex $X$ of the $n$-simplex $\Delta$ has
a vertex with the acyclic link.
\end{proposition}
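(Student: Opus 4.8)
The plan is to reduce the assertion ``$X$ has a vertex with acyclic link'' to the cleaner statement ``$X$ has a vertex whose \emph{deletion} is acyclic,'' and then to exploit that a complex on at most five vertices has dimension at most four, so that only one genuinely combinatorial case survives.

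First I would set up the standard Mayer--Vietoris reduction. For a vertex $v$ of $X$ write $\mathrm{st}(v)$ for the closed star (all faces $\sigma$ with $\sigma\cup\{v\}\in X$, together with their subfaces) and $\mathrm{del}(v)$ for the deletion (the subcomplex of all faces of $X$ not containing $v$). Then $X=\mathrm{st}(v)\cup\mathrm{del}(v)$, the intersection $\mathrm{st}(v)\cap\mathrm{del}(v)$ equals the link $L_v:=\mathrm{lk}(v)$, and $\mathrm{st}(v)$ is a cone with apex $v$, hence acyclic. Since $X$ is acyclic as well, the reduced Mayer--Vietoris sequence (the same tool already used in the proof of Lemma~\ref{4D}) degenerates to isomorphisms $\tilde H_k(L_v)\cong\tilde H_k(\mathrm{del}(v))$ induced by inclusion, for every $k$. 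Thus $L_v$ is acyclic \emph{iff} $\mathrm{del}(v)$ is acyclic, and it suffices to produce one vertex $v$ with acyclic deletion.

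Next I would record a sufficient condition. Call $v$ \emph{dominated} if there is a vertex $u\ne v$ lying in every facet of $X$ that contains $v$; a short check shows that then $L_v$ is closed under adjoining $u$, i.e. $L_v$ is a cone with apex $u$, hence contractible and in particular acyclic. So we are done as soon as $X$ has a dominated vertex, and it remains to treat complexes with no dominated vertex. Here the bound on the number of vertices takes over: since $X$ has at most five vertices, $\dim X\le 4$, and I would dispatch the extreme dimensions directly. If $\dim X=4$ then $X$ is the full $4$-simplex, and every vertex is dominated. If $\dim X=1$ then $X$ is a tree, and any leaf $v$ is dominated by its neighbour (indeed $L_v$ is a single point). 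If $\dim X=3$, a short analysis of which $4$-element subsets occur as facets shows that, unless all five occur (the excluded case, giving $\partial\Delta^4\cong S^3$, which is not acyclic), some vertex lies in all top facets and the complex is a cone, again producing a dominated vertex; the case $\dim X=0$ (a single point) is the trivial base case.

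The only genuinely delicate case, and the main obstacle, is $\dim X=2$ on exactly five vertices. Here each link $L_v$ is a \emph{graph}, and $L_v$ is acyclic precisely when it is a tree (connected and cycle-free). I would argue that acyclicity of $X$ forces some $L_v$ to be a tree. The relevant constraints are strong: $H_2(X)=0$ makes the boundary map on triangles injective, so $X$ carries no closed surface; $\tilde\chi(X)=0$ forces the face numbers to satisfy $E=T+4$; and $H_1(X)=H_0(X)=0$ pin down how five vertices may be joined. Using that a dominated vertex already gives a cone (hence tree) link, one may assume no link is a star, and then a finite inspection of the possible triangle-and-edge patterns on five vertices, organised by the number $T$ of triangles (bounded through $E=T+4$), verifies that in every acyclic configuration at least one vertex has a connected, cycle-free link. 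This finite verification is where the real work lies; the remaining dimensions above are immediate once the deletion/link reduction and the domination criterion are in place.
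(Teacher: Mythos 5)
Your two structural reductions are correct and genuinely useful: the Mayer--Vietoris argument giving $\tilde H_k(\mathrm{lk}(v))\cong\tilde H_k(\mathrm{del}(v))$ (so a vertex has an acyclic link iff its deletion is acyclic), and the observation that a dominated vertex has a cone, hence acyclic, link. The Euler-characteristic relation $E=T+4$ (which, with $E\le 10$, even yields the paper's bound of at most six triangles for free) is also a good start. But there is a genuine gap exactly where you say ``the real work lies'': for pure $2$-dimensional $X$ on all five vertices you never carry out the promised ``finite inspection''; you only assert that it ``verifies that in every acyclic configuration at least one vertex has a connected, cycle-free link'', which is a restatement of the goal, not an argument. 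This case is the entire substance of the proposition, and the paper's proof spends most of its length on it, using steps that are not routine: if no vertex has an acyclic link, then every vertex must lie in at least three $2$-simplices and every link must be a triangle; five triangles are then impossible because each edge would have to lie in exactly two of them, while the number of edge-incidences, $15$, is odd; and six triangles force $X$ to be combinatorially a triangular bipyramid, i.e., a nontrivial $2$-cycle. Nothing in your outline substitutes for these arguments, so the proposal does not yet prove the proposition.

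A secondary error: your $\dim X=3$ case is wrong as stated. You examine only which $4$-element subsets occur as facets and conclude that $X$ ``is a cone''; but $X$ need not be pure. For example, take $X=\overline{T}\cup(0\ast P)$, where $T=\{1,2,3,4\}$, $P$ is the path $1$--$2$--$3$--$4$, and $0\ast P$ denotes the cone with apex $0$ over $P$: this $X$ is acyclic and $3$-dimensional, yet no vertex lies in all four of its facets $T,\{0,1,2\},\{0,2,3\},\{0,3,4\}$, so it is not a cone (it does have a dominated vertex, namely $1$). Fortunately, your own reduction repairs this case instantly: if $X$ contains a $3$-simplex $T$, then the deletion of the remaining vertex $v$ equals the full closed simplex $\overline{T}$, which is acyclic, hence $\mathrm{lk}(v)$ is acyclic --- in essence the paper's suspension argument. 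So the dimension-$3$ case is salvageable within your framework, but the unexecuted $2$-dimensional inspection remains a real gap.
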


\begin{proof}
We will consider only the most complex case of $n=4$.

1. If $X$ is one-dimensional, then, being acyclic, it is a tree.
Then $X$ has a leaf, with the acyclic link of a vertex.

2. If $X$ contains a 3-simplex, say $\delta$,
then there is the only vertex, say $v$, in $\Delta \setminus\> \overline \delta$.
If no simplices, apart from $\delta$ and its faces, are in $X$, then
all vertices of $X$ have acyclic links.
Otherwise, $X$ is homotopy equivalent to a suspension of the link of $v$ in $X$, hence
the link is acyclic.
This covers the cases when $X=\Delta$ and when $\dim X=3$.

3. Suppose that $X$ is two-dimensional.
Since any two 2-simplices in $\Delta$ have a common vertex, the one-dimensional part of $X$
consists of trees which cannot have all leaves at some vertices of 2-simplices of $X$
(otherwise $X$ would have a non-trivial 1-cycle).
Hence either such a tree has a leaf with its adjacent vertex as its acyclic link,
or $X$ is pure two-dimensional.

4. Suppose that $X$ is pure two-dimensional.

4a) There are ten 2-simplices in $\Delta$, and they cannot all be in $X$ since
the 2-skeleton of $\Delta$ is not acyclic.
Removing one of them, we get the 2-skeleton $Z$ of an acyclic 3-dimensional complex consisting of
three 3-simplices.
Hence $Z$ has three independent 2-cycles, and
we have to remove at least three 2-simplices to make $Z$ acyclic.
It follows that $X$ has at most six 2-simplices.

4b) If a vertex $v$ of $X$ has one adjacent 2-simplex in $X$ or two adjacent 2-simplices having
a common edge, then $v$ has a link in $X$ which is a tree.
If $v$ has two adjacent 2-simplices in $X$ without a common edge, then $X$, being acyclic,
consists of just these two 2-simplices, and any vertex in $X$, different from $v$, has an
acyclic link.
It follows that if $X$ does not have any vertices with acyclic links, then for each vertex
$v$ of $X$ there should be at least three 2-simplices having $v$ as a common vertex.
If $v$ has exactly three adjacent 2-simplices, the link $L$ of $v$ is connected,
since any disconnected graph with three edges has at least five vertices.
If $L$ is not acyclic, then it is a triangle (the boundary of a 2-simplex).
Since there are five vertices (otherwise $X$ would be a subcomplex of a three-dimensional
simplex), $X$ must have at least five 2-simplices.

It remains to consider the cases of five and six 2-simplices in $X$.

4c) Let $X$ have exactly five 2-simplices.
If each vertex does not have in $X$ an acyclic link (and hence, by 4b), all links are triangles),
then each edge in $X$ is shared by exactly two 2-simplices.
But this is impossible since there are 15 edges to divide into pairs.

4d) Suppose that $X$ has exactly six 2-simplices.

Since the average number of simplices adjacent to the vertices of $X$ is 18/5, there should be
a vertex $v$ of $X$ with exactly three adjacent 2-simplices.
If the link $L$ of $v$ is not acyclic, then, by 4b), it is a triangle.
Let $w$ be the vertex of $X$ different from $v$ and three vertices of $L$.
Then $v$ is not in the link $M$ of $w$, hence $M$ (being a subset of $L$) is either
acyclic or equal to $L$.
In the latter case $X$ is combinatorially equivalent to a triangular bipyramid,
hence is a non-trivial 2-cycle.
This is a contradiction.
\end{proof}

\begin{example}
The following example shows that Proposition~\ref{pr:acyclic} does not hold for $n=5$.

Consider a hexagon with vertices $0, \ldots ,5$.

Attach the boundary of a 3-simplex with vertices $0,1,2,3$ without the simplex $(013)$.
That is a cell contractible to the union of the hexagon edges $(01), (12), (23)$.
Repeat the same construction, replacing $0,1,2,3$ by $2,3,4,5$, and
then repeat again, replacing $2,3,4,5$ by $4,5,0,1$.

None of these three cells have common 2-simplices, and their common edges are all on the hexagon.
Hence, the union $Y$ of these cells is contractible to the hexagon and is homotopy equivalent
to a circle.

Attach the 2-simplex  $(135)$ to $Y$ making the resulting simplicial complex
$X:= Y \cup (135)$ contractible, and therefore acyclic.
Then the links in $X$ of the vertices $0,2,4$ have cycles of length three, while the links
in $X$ of the vertices $1,3,5$ have cycles of length four.
Thus $X$ is an acyclic subcomplex of five-dimensional simplex $\Delta$ having no vertices
with acyclic links.
\end{example}

\begin{proposition}[\cite{VDD}, Ch.~8, (2.14)]\label{pr:fibres}
Let $X \subset \Real^{m+n}$ be a definable set, and let $\pi:\> \Real^{m+n} \to \Real^m$ be the
projection map.
Then there exist an integer $N>0$ and a definable (not necessarily continuous) map
$f:\> X \to \Delta$, where $\Delta$ is an $(N-1)$-simplex, such that for every $\x \in \Real^m$
the restriction $f_{\x}:\> (X \cap \pi^{-1}(\x))  \to \Delta$ of $f$ to $X \cap \pi^{-1}(\x)$
is a definable homeomorphism onto a union of faces of $\Delta$.
\end{proposition}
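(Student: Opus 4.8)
The plan is to reduce the statement to two standard tools of o-minimal geometry — the definable trivialization theorem (Hardt's theorem) and the definable triangulation theorem — and then to assemble the map $f$ piecewise over a finite partition of the base $\Real^m$. The non-continuity permitted in the conclusion is exactly what makes such a piecewise assembly legitimate. First I would apply definable trivialization to the restriction of $\pi$ to $X$. This produces a finite partition $\Real^m = S_1 \cup \cdots \cup S_r$ into definable sets and, for each $i$, a definable set $F_i \subset \Real^n$ together with a definable homeomorphism
$$h_i:\> X \cap \pi^{-1}(S_i) \To S_i \times F_i$$
commuting with the projections onto $S_i$. In particular, for every $\x \in S_i$ the fibre $X \cap \pi^{-1}(\x)$ is carried by $h_i$ homeomorphically onto $\{\x\} \times F_i$, so all fibres over a fixed piece $S_i$ are definably homeomorphic to one model fibre $F_i$; since the partition is finite, there are only finitely many model fibres $F_1, \ldots, F_r$.

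Next I would triangulate each model fibre. By the definable triangulation theorem, each $F_i$ is definably homeomorphic, via some $\phi_i$, to a union of relatively open faces of a simplex on $N_i$ vertices. Setting $N := \max_i N_i$ and letting $\Delta$ be the standard $(N-1)$-simplex, any injection of the vertex set of the $i$-th triangulating simplex into the $N$ vertices of $\Delta$ realizes the image of $\phi_i$ as a subcomplex $K_i$ of $\Delta$, that is, as a union of faces of $\Delta$. Composing, I obtain a definable homeomorphism $\psi_i:\> F_i \To |K_i|$ onto a union of faces of $\Delta$. Here finiteness of the partition is precisely what allows a \emph{single} simplex $\Delta$ to serve all the pieces at once.

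Finally I would define $f$ piecewise by setting, on $X \cap \pi^{-1}(S_i)$,
$$f := \psi_i \circ \mathrm{pr}_{F_i} \circ h_i,$$
where $\mathrm{pr}_{F_i}:\> S_i \times F_i \To F_i$ is the projection. Since the sets $X \cap \pi^{-1}(S_i)$ form a finite definable partition of $X$ and each constituent map is definable, $f:\> X \To \Delta$ is definable; it is in general discontinuous only across the frontiers of the pieces $S_i$, as the statement allows. For a fixed $\x \in S_i$, the restriction $f_{\x}$ equals $\psi_i$ composed with the homeomorphism $X \cap \pi^{-1}(\x) \To F_i$ induced by $h_i$, and is therefore a definable homeomorphism of the fibre onto $|K_i|$, a union of faces of $\Delta$. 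This is exactly the asserted conclusion.

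I expect the genuine content to lie in the trivialization step: everything after it is bookkeeping, the one point needing care being the choice of a single simplex $\Delta$ large enough to contain every $K_i$ as a subcomplex, which finiteness of the partition supplies. If one wishes to keep the argument logically independent of Hardt's theorem (for instance because this statement is meant to precede it), then the trivialization step must instead be established directly by induction on $m$ from the cell decomposition theorem, constructing the model fibres $F_i$ and the fibrewise homeomorphisms $h_i$ by hand; that induction would then be the main obstacle.
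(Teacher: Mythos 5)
The paper offers no proof of Proposition~\ref{pr:fibres} to compare yours against: it is quoted without proof from van den Dries \cite{VDD} (Ch.~8, (2.14)), where it appears among the exercises following the Triangulation Theorem and is presumably meant to be solved with the tools of that chapter alone. Judged on its own, your argument is correct. Definable (Hardt-type) trivialization applies to the continuous definable map $\pi|_X$ and yields the finite definable partition and model fibres $F_i$; the triangulation theorem realizes each $F_i$ as a union of relatively open faces of a finite complex $K_i$; injecting the vertex set of each $K_i$ into the vertex set of one large simplex $\Delta$ induces a simplicial map that is injective on vertices, hence globally injective and a homeomorphism of $|K_i|$ onto a union of faces of $\Delta$; and the piecewise-assembled $f$ is definable because its graph is a finite union of definable graphs, with the permitted discontinuity across the frontiers of the pieces making the assembly harmless. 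The fibre restrictions $f_{\x}$ are then definable homeomorphisms onto unions of faces (vacuously so over points outside $\pi(X)$), which is exactly the assertion. The one caveat is the one you flag yourself: your route inverts the logical order of \cite{VDD}, where this statement precedes the trivialization theorem of Ch.~9, so within that book your proof would risk circularity unless trivialization is established independently of this exercise; in the context of the present paper, which treats both trivialization and triangulation as citable black boxes, this is a non-issue. What your route buys is brevity and transparency --- after trivialization everything is bookkeeping; what the chapter-internal route (induction on $m$ from cell decomposition) buys is logical economy, proving the weaker fibrewise statement without invoking the stronger parametrized one.
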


\begin{corollary}\label{cor:fibres}
Using the notations from Proposition~\ref{pr:fibres},
let all fibres $X \cap \pi^{-1} (\x)$ be definable compact sets.
Then there is a partition
of $\pi (X)$ into a finite number of definable sets $T \subset \Real^m$ such that all
fibres $X \cap \pi^{-1} (\x)$ with $\x \in T$ are definably homeomorphic, moreover
each of these fibres is definably homeomorphic to the same simplicial complex.
\end{corollary}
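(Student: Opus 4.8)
The plan is to read off the finitely many homeomorphism types directly from the fibrewise triangulation supplied by Proposition~\ref{pr:fibres}. First I would apply that proposition to obtain the integer $N$, the $(N-1)$-simplex $\Delta$, and the definable map $f:\>X\to\Delta$ whose restriction $f_{\x}$ to each fibre $X\cap\pi^{-1}(\x)$ is a definable homeomorphism onto a union of faces of $\Delta$. Since each fibre is compact by hypothesis, its image $f_{\x}(X\cap\pi^{-1}(\x))$ is compact, hence a \emph{closed} union of faces of $\Delta$, i.e.\ the underlying polyhedron $|K_{\x}|$ of a uniquely determined subcomplex $K_{\x}\subseteq\Delta$. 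Thus $\x\mapsto K_{\x}$ is a well-defined map from $\pi(X)$ into the set of subcomplexes of $\Delta$, and this target set is \emph{finite} because $\Delta$ has only finitely many faces. The desired partition will simply be the partition of $\pi(X)$ according to the value of $K_{\x}$.

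Next I would show that each block of this partition is definable. For a face $\delta$ of $\Delta$, the set $\{p\in X\mid f(p)\in\mathrm{relint}(\delta)\}$ is definable, as $X$, the graph of $f$, and $\mathrm{relint}(\delta)$ are definable; its image under $\pi$, call it $G_\delta\subseteq\pi(X)$, is then definable because definability is preserved under projection in an o-minimal structure. By construction $\x\in G_\delta$ exactly when $\mathrm{relint}(\delta)$ meets $f_{\x}(X\cap\pi^{-1}(\x))$, and since this image is a union of closed faces, that happens if and only if $\delta$ is a face of $K_{\x}$. Hence for each subcomplex $K\subseteq\Delta$ the block
$$T_K:=\Big(\bigcap_{\delta\in K}G_\delta\Big)\cap\Big(\bigcap_{\delta\notin K}\big(\pi(X)\setminus G_\delta\big)\Big)$$
is definable, the nonempty $T_K$ are pairwise disjoint, and they cover $\pi(X)$ since every fibre image is some $|K|$. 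This gives the required finite definable partition.

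Finally, over a fixed block $T_K$ every fibre $X\cap\pi^{-1}(\x)$ is, via $f_{\x}$, definably homeomorphic to the single polyhedron $|K|$; composing $f_{\x}$ with $f_{\x'}^{-1}$ for $\x,\x'\in T_K$ yields a definable homeomorphism between any two fibres over $T_K$. In particular all fibres over $T_K$ are definably homeomorphic to one another, and each is definably homeomorphic to the same simplicial complex $K$, which is precisely the assertion.

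The proof is essentially bookkeeping, and I expect the only delicate points to be twofold. The definability of the blocks $T_K$ is the one place where genuine o-minimal input is used, but it reduces cleanly to existential quantification over the definable set $X$ through the definable map $f$. More conceptually, the role of the compactness hypothesis on the fibres is exactly to force each fibre image to be a \emph{closed} union of faces, hence the realization $|K|$ of an honest subcomplex, rather than merely a union of (possibly relatively open) faces; this is what allows us to conclude that each fibre is definably homeomorphic to a bona fide simplicial complex and to identify the finitely many types with the subcomplexes of $\Delta$.
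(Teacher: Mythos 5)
Your proof is correct and takes essentially the same approach as the paper's own (very terse) proof: partition $\pi(X)$ according to which of the finitely many unions of faces of $\Delta$ the fibre maps onto under $f$, with definability of each block following from the definability of $f$. Your write-up merely supplies the details the paper leaves implicit, namely the explicit Boolean-combination argument via the sets $G_\delta$ for definability of the blocks, and the observation that compactness of the fibres forces each image to be a closed union of faces, i.e.\ the polyhedron of an honest subcomplex.
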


\begin{proof}
There is a finite number of different unions of faces in $\Delta$.
Since $f$ is definable, the pre-image of any such union under the map $f \circ \pi^{-1}$
is a definable set.
\end{proof}

\end{document}